\documentclass[11pt]{amsart}
\usepackage{geometry}    
\usepackage{graphicx}                    
\usepackage{amsmath}                     
\usepackage{amsfonts}                    
\usepackage{amssymb}                     
\usepackage{amsthm}
\usepackage{xcolor} 
\usepackage{comment}
\usepackage{url}
\usepackage{subfigure}
\usepackage{float}
\usepackage{lipsum}
\newcommand{\N}{{\mathbb N}}

\newcommand{\R}{{\mathbb R}}

\newcommand{\bfu}{{\bf u}}

\newtheorem{theorem}{Theorem}[section]
\newtheorem{definition}{Definition}[section]
\newtheorem{lemma}{Lemma}[section]

\newtheorem{remark}{Remark}[section]

\DeclareMathOperator{\Id}{Id}
\DeclareMathOperator{\Df}{Df}
\DeclareMathOperator{\DF}{DF}

\title{Dynamics of perturbed elliptical billiard tables}

\author{Patrick R. Bishop}
\author{Summer Chenoweth}  
\author{Emmanuel Fleurantin}
\author{Evelyn Sander}
\address{Department of Mathematical Sciences, George Mason University, Fairfax, VA 22030}
\author{Jason Mireles James}
\address{Department of Mathematics and Statistics, Florida Atlantic University, Boca Raton, FL 33431}

\begin{document}
\maketitle
\markboth{BISHOP ET AL.}{PERTURBED ELLIPTICAL BILLIARD TABLES}

\vspace{-2em}

\begin{abstract}
%
Dynamical billiards consist of a particle on a two-dimensional table, 
bouncing elastically off a boundary curve. The state of the system is 
given by two numbers: one describing the 
location along the curve where the bounce occurs, and another describing 
the incoming angle of the bounce. Successive bounces define a two-dimensional 
area preserving map, and iterating this map gives a dynamical system first 
studied by Birkhoff. One of the simplest smooth table shapes is that of an 
ellipse, in which case the dynamics of the billiard map is completely integrable. 
The longstanding Birkhoff conjecture is that elliptical tables are the only 
smooth convex table for which complete integrability occurs. In this spirit, 
we present an implicit real analytic method for iterating billiard maps on 
perturbed elliptical tables. This method allows us to compute local stable 
and unstable manifolds of periodic orbits using the parameterization method.  
Globalizing these local manifolds numerically provides insight into the dynamics 
of the table.     
\end{abstract}

\vspace{1em}

\begin{flushleft}
\textbf{Keywords:} Dynamical Billiards, Birkhoff Conjecture, 
Stable/Unstable Manifolds, Parameterization Method

\textbf{2020 AMS Subject Classifications:} 37C83, 37M21, 37M05, 37D10
\end{flushleft}


\section{Introduction}\label{sec:intro} 


The game of billiards, beloved since the 
14th century, inspired the 
mathematical models proposed by Birkhoff
in the 1920s~\cite{Birkhoff_1927}. Since then, 
mathematical billiards 
has served as an especially rich proving ground for 
ideas in Hamiltonian systems theory. 
One of the motivations for Birkhoff's work was to 
solidify the foundations of thermodynamics 
by restricting attention to a single gas particle.
Billiards (especially on polygonal tables) are moreover 
related to geodesic flow on surfaces, 
and have applications in classical mechanics and optics 
\cite{Bordeianu_Felea_Besliu_Jipa_Grossu_2011b, Barutello_De_Blasi_Terracini_2023a, Damour_Henneaux_Nicolai_2003, Berglund_Kunz_1996}.
The reader will find more complete discussion of the history and application
of these models, as well as many additional references, 
in any of the excellent sources
\cite{Gutkin_1986,Tabachnikov_2009a,Chernov_Markarian_2006,
Himmelstrand2013,Bordeianu_Felea_Besliu_Jipa_Grossu_2011b,
Bunimovich_2019,Bialy_Tabachnikov_2022,
Franks03, Gutkin_2012, Levi_Tabachnikov_2007, Martin_2016,Meiss_1992}.


Mathematical billiards are introduced formally in Section 
\ref{sec:dyna} but, loosely speaking, the idea is to consider a 
massless particle moving without friction across a planar 
billiard table.  The particle bounces off the table's 
boundary in an elastic collision 
and moves off in a new direction determined by Snell's law. 
If the boundary of the table is a simple closed curve,
it can be parameterized with a single parameter.  Since the angle of 
incidence is of course described by an angle, these 
two numbers describe the state of the system at collision. 
In between collisions the particle moves in a straight line with 
constant velocity, allowing one to reduce the system from a 
differential equation with impacts to a smooth map on a cylinder.  
The reduced system is known as a billiard map.
The relation between the flow on the table and the 
billiard map is illustrated in Figure \ref{fig:TableOrbits} 
for a pair of tables which, while simple, already suggest 
that billiard trajectories can be quite complicated.

\begin{figure}[tbh!]\label{fig:TableOrbits}
\begin{center}
 \includegraphics[width=0.33\textwidth]{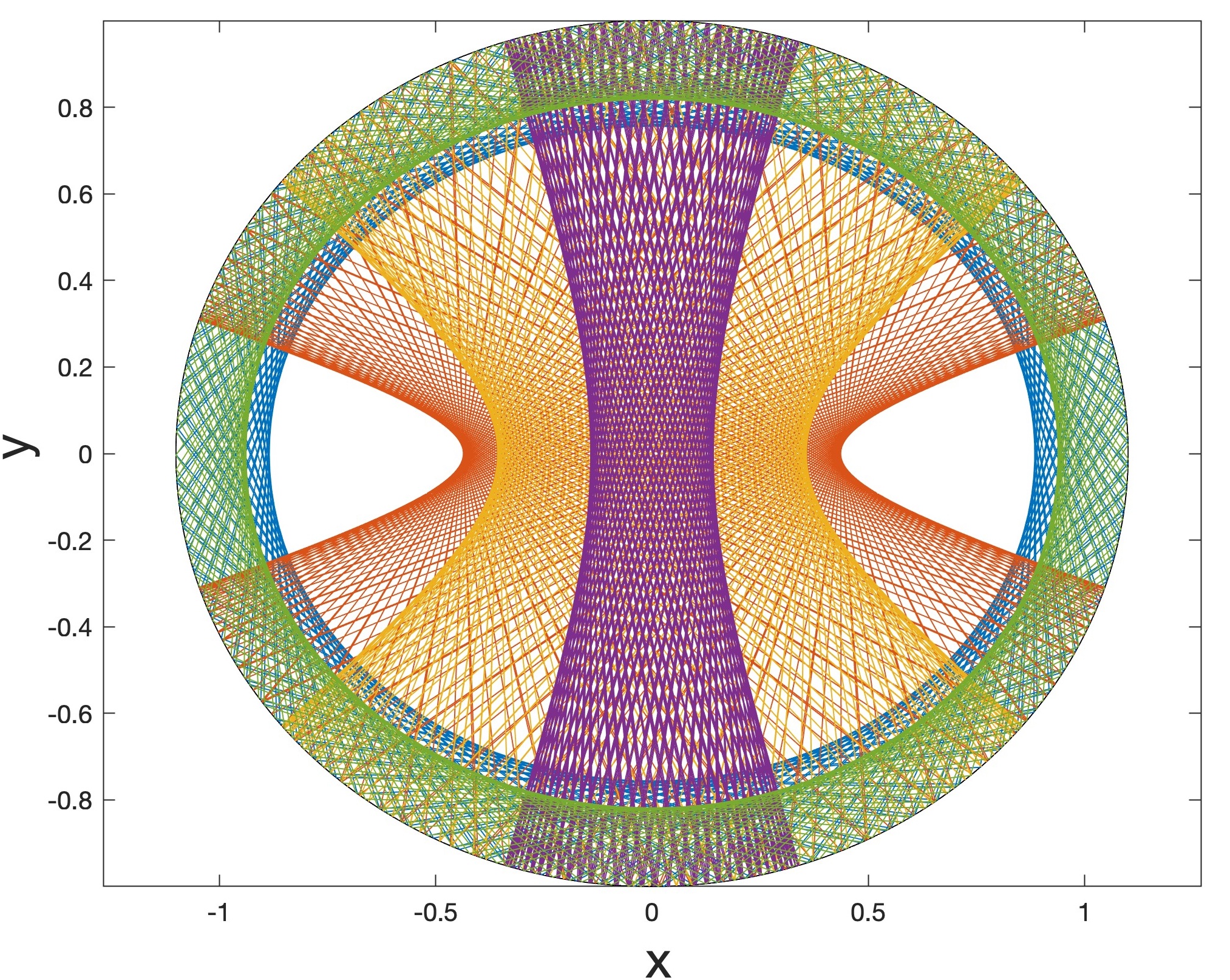}
 \includegraphics[width=0.33\textwidth]{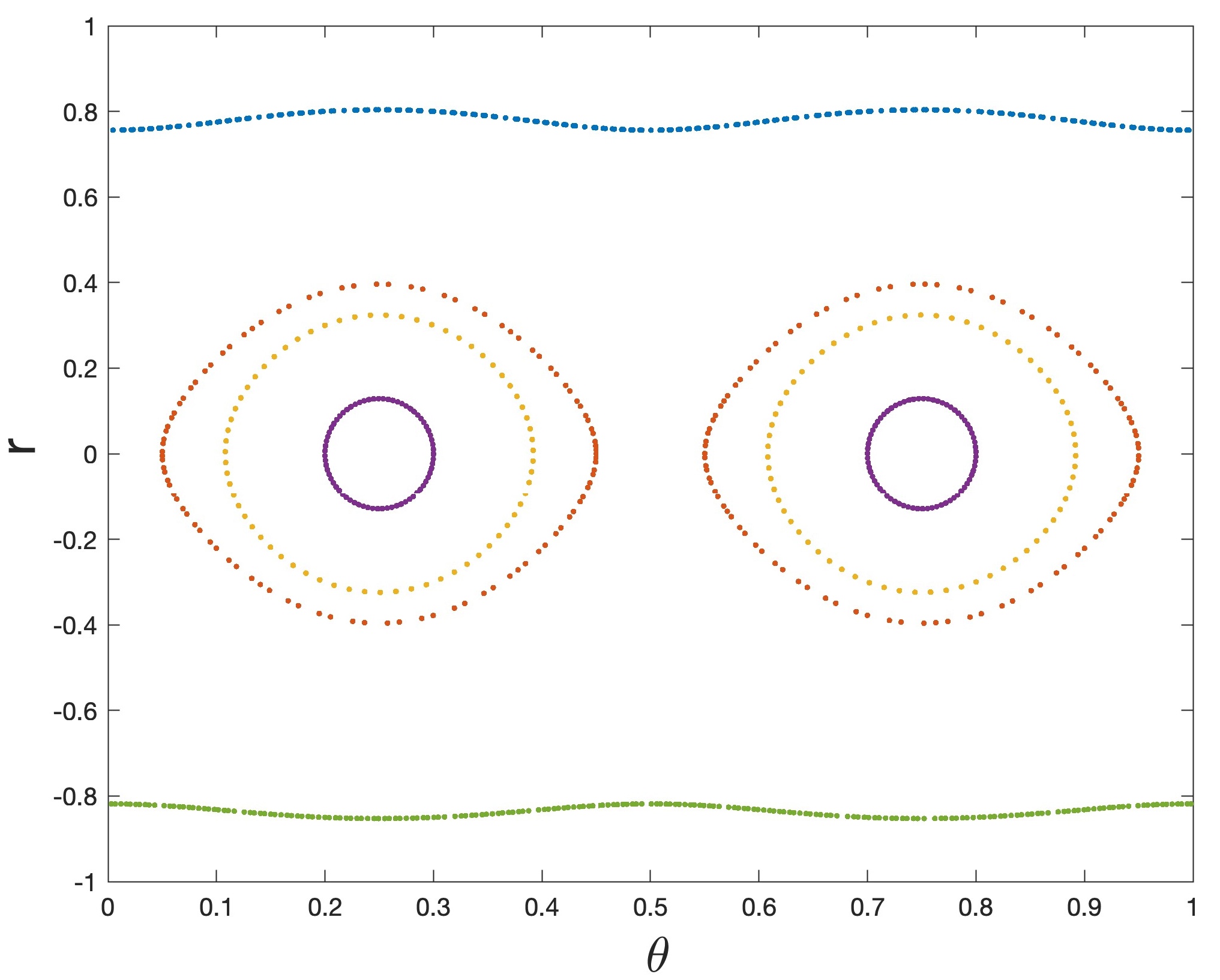}
 \includegraphics[width=0.32\textwidth]{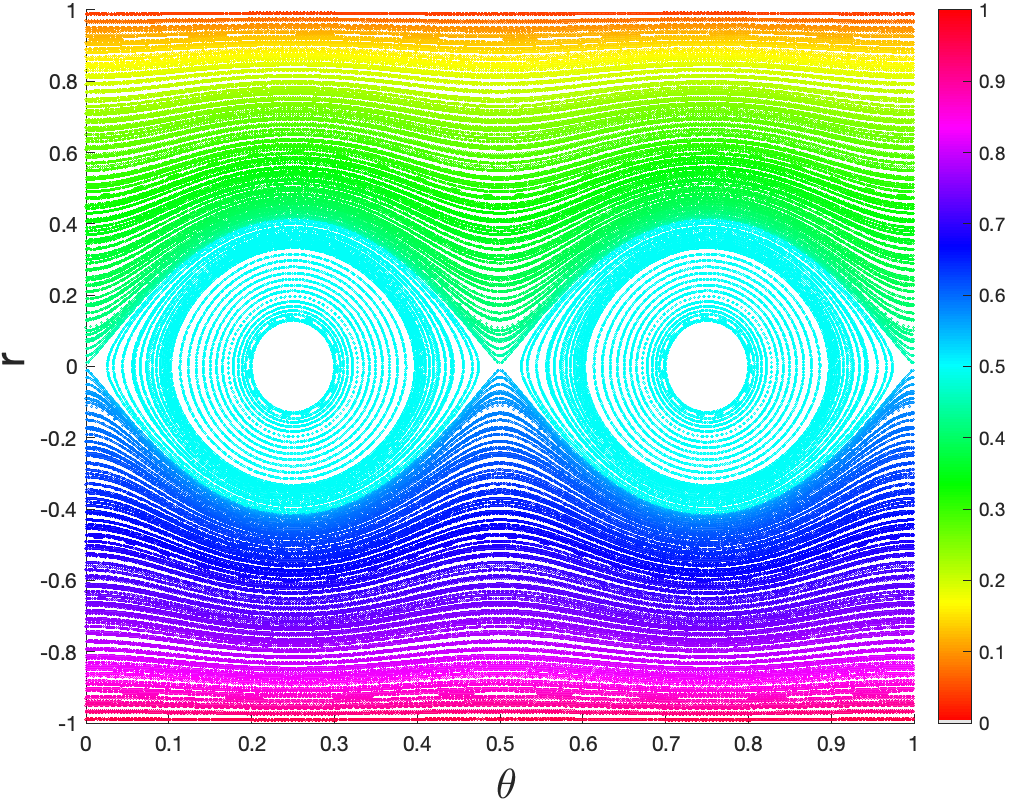}\\
 \includegraphics[width=0.33\textwidth]{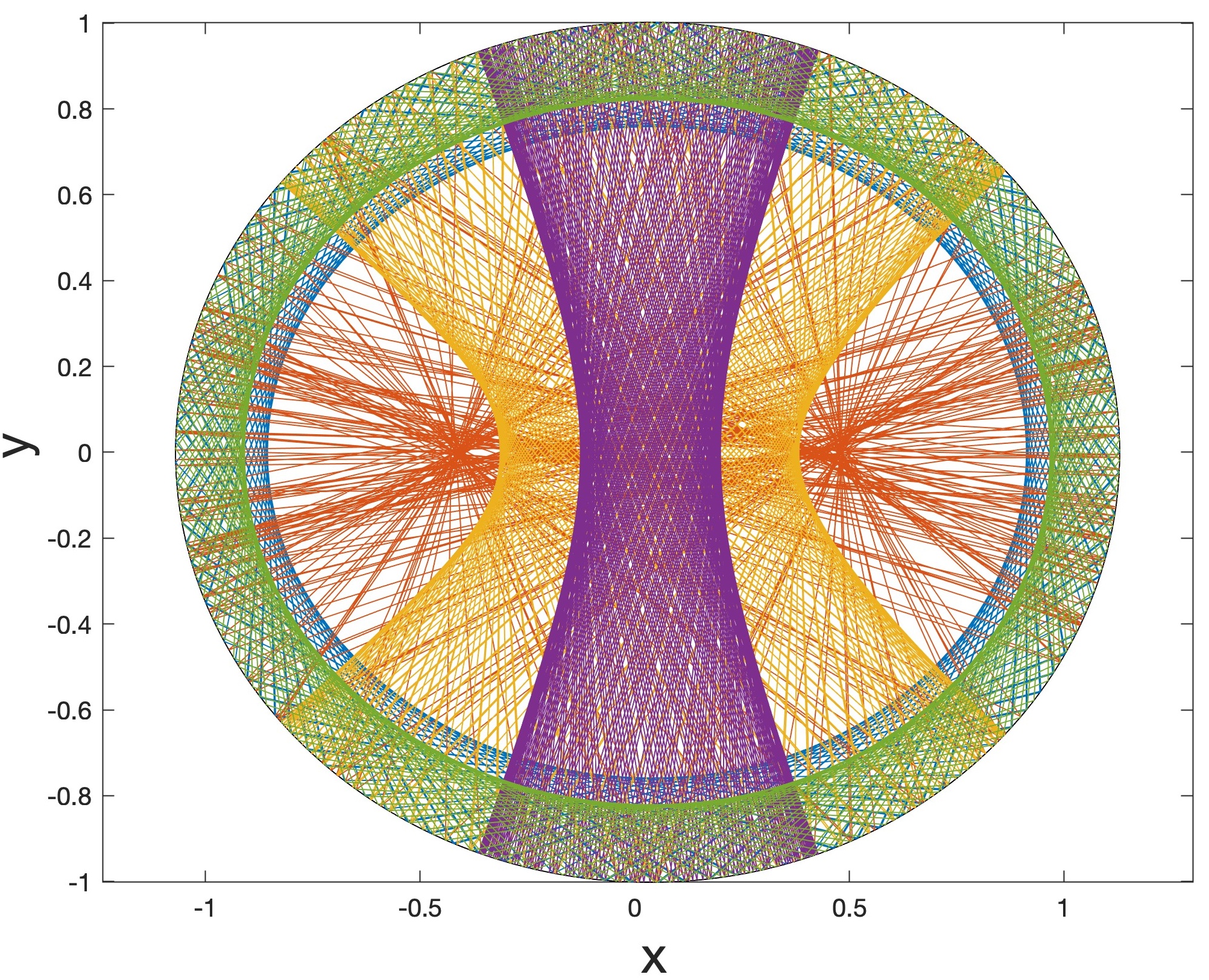}
 \includegraphics[width=0.33\textwidth]{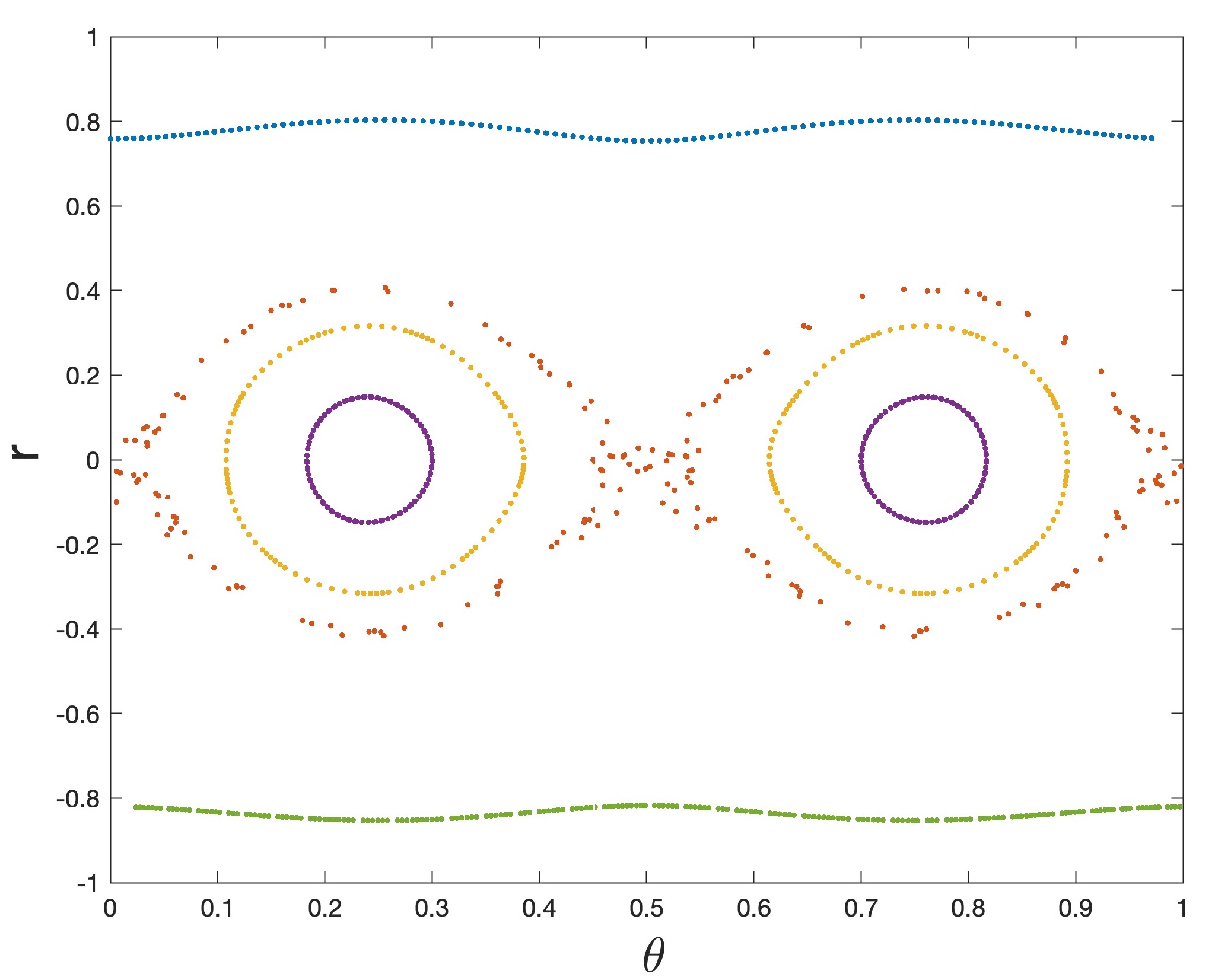}
 \includegraphics[width=0.32\textwidth]{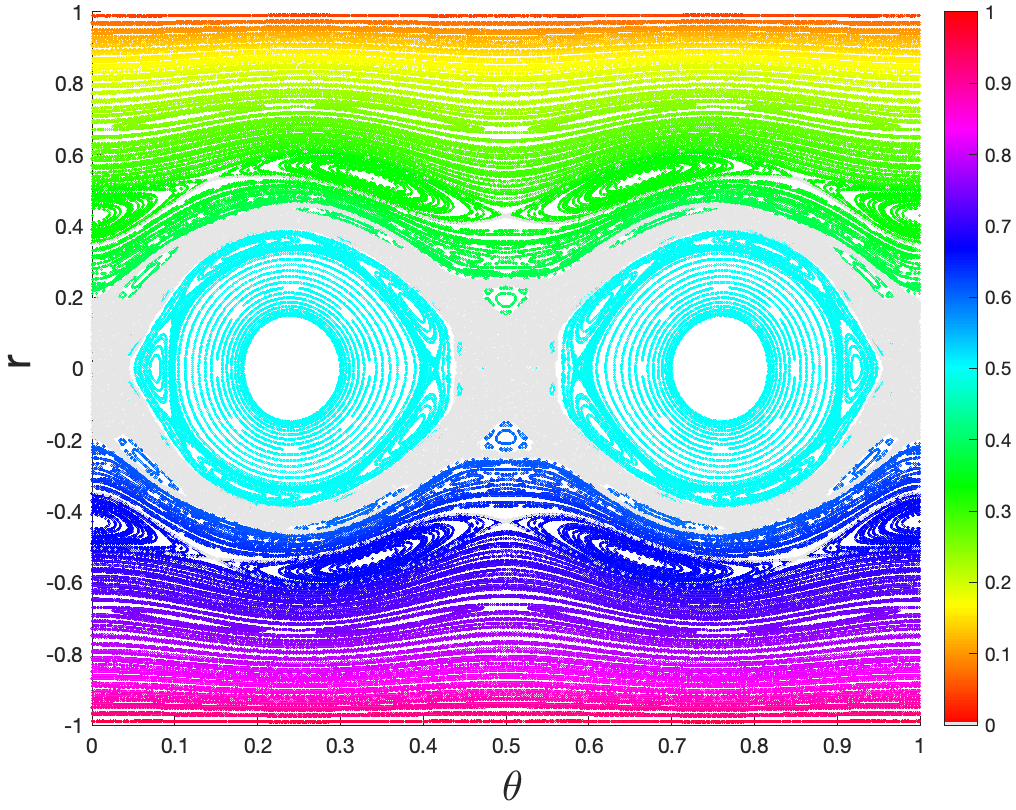}
\caption{ Top Left: Orbits on the physical table  which is  an unperturbed 
ellipse with eccentricity $0.4583$. Each trajectory has a caustic, i.e. a 
curve which is tangent to every line. Top Middle: Phase space orbit corresponding 
to the trajectory on the top left. The value of $\theta$ indicates the location 
of bounce, and  $r$ measures the angle of the bounce. Top Right: The full phase 
space of orbits are quasiperiodic, colored by their frequency. 
Bottom Left, Middle: The trajectories and orbits for the same orbits, but 
for a small perturbation of the ellipse (Table A described in~\eqref{eq:pert_ell} 
and Table~\ref{table:coeff}). Some caustics remain. However, the red trajectory 
no longer has a caustic; its corresponding orbit is chaotic in phase space. 
Bottom Right: For the full phase space, many quasiperiodic orbits persist and 
are colored by frequency, but some orbits are now chaotic, colored in gray.}
\end{center}
\end{figure}

Again, because
particle trajectories follow straight lines between  
consecutive collisions, the dynamics are 
completely determined by the shape of the table.  
An important conjecture in this context is 
the Birkhoff conjecture, which 
claims that a billiard system is integrable 
if and only if the boundary of the table is an ellipse
(this includes the case of the circle).
From the beginning of the study of billiard problems in mathematics, 
this conjecture has occupied a central place, and work on the problem continues through 
the present day. Avila et al.  ~\cite{Avila_De_Simoi_Kaloshin_2016}  
for example considered the case of infinitesimally 
 perturbed ellipses, and Delshams et al.~\cite{delshams_ros, Kaloshin_Sorrentino_2018} 
 proved local versions of the conjecture. In 2023, Baracco and 
 Bernardi~\cite{Baracco_Bernardi_2024} proved that a totally integrable 
 strictly-convex symplectic billiard table, whose boundary has strictly positive 
 curvature, must be an ellipse. For more discussion of the theory and open 
 problems, see~\cite{Bialy_Tabachnikov_2022, Schwartz_2022}.

Since the Birkhoff conjecture concerns integrable billiards, the question 
is closely related to the existence of chaotic motions.   
Chaos in billiards systems was first studied by Bunimovich~\cite{Bunimovich_1975, Bunimovich_1979}
in the 1970s, who showed that there exist tables with only focusing 
components (focusing trajectories like a lens) admitting
chaotic dynamics. An overview of chaotic billiards is found in 
\cite{Chernov_Markarian_2006}, and more recent work in this area includes
~\cite{Datseris_Hupe_Fleischmann_2019, Bunimovich_2019}. 
If the Birkhoff conjecture holds, then we expect non-elliptical tables
to admit chaotic motions.

In this paper, we numerically study a basic mechanism for generating 
chaotic dynamics; namely the homoclinic tangles 
first studied by Poincar\'{e} 
\cite{MR1194622,MR1194623,MR1194624}
and later formalized by Smale \cite{MR182020}.
These tangles are formed by transverse intersections between
stable/unstable manifolds 
of hyperbolic fixed points and/or periodic orbits.  
To reliably locate these objects for billiard maps,
we implement a parameterization method for numerically computing
high order Taylor expansions of the local stable/unstable 
manifolds, and ``grow'' the local manifolds to find the desired intersections.
Assuming the billiard table has a real analytic boundary,
the series expansions are guaranteed to converge.
Heuristic a-posteriori indicators provide practical 
guides to the useful domain for the truncated series.
Figure~\ref{fig:PS1}, for example, illustrates 
stable and unstable manifolds computed 
for a variety of periodic orbits on the table of 
Figure ~\ref{fig:TableOrbits}.  Figures ~\ref{fig:PS2}
and~\ref{fig:PS3} illustrate stable and unstable manifolds for
periodic orbits of four other perturbations of elliptical tables. 
These manifolds intersect transversely, 
suggesting the existence of chaos in each case. 
We remark that the parametrization method is well suited for 
computer assisted proofs, and 
a follow-up paper will validate the findings presented in the 
present work. 

\begin{figure}[tbh!]
\begin{center}
 \includegraphics[width=0.7\textwidth]{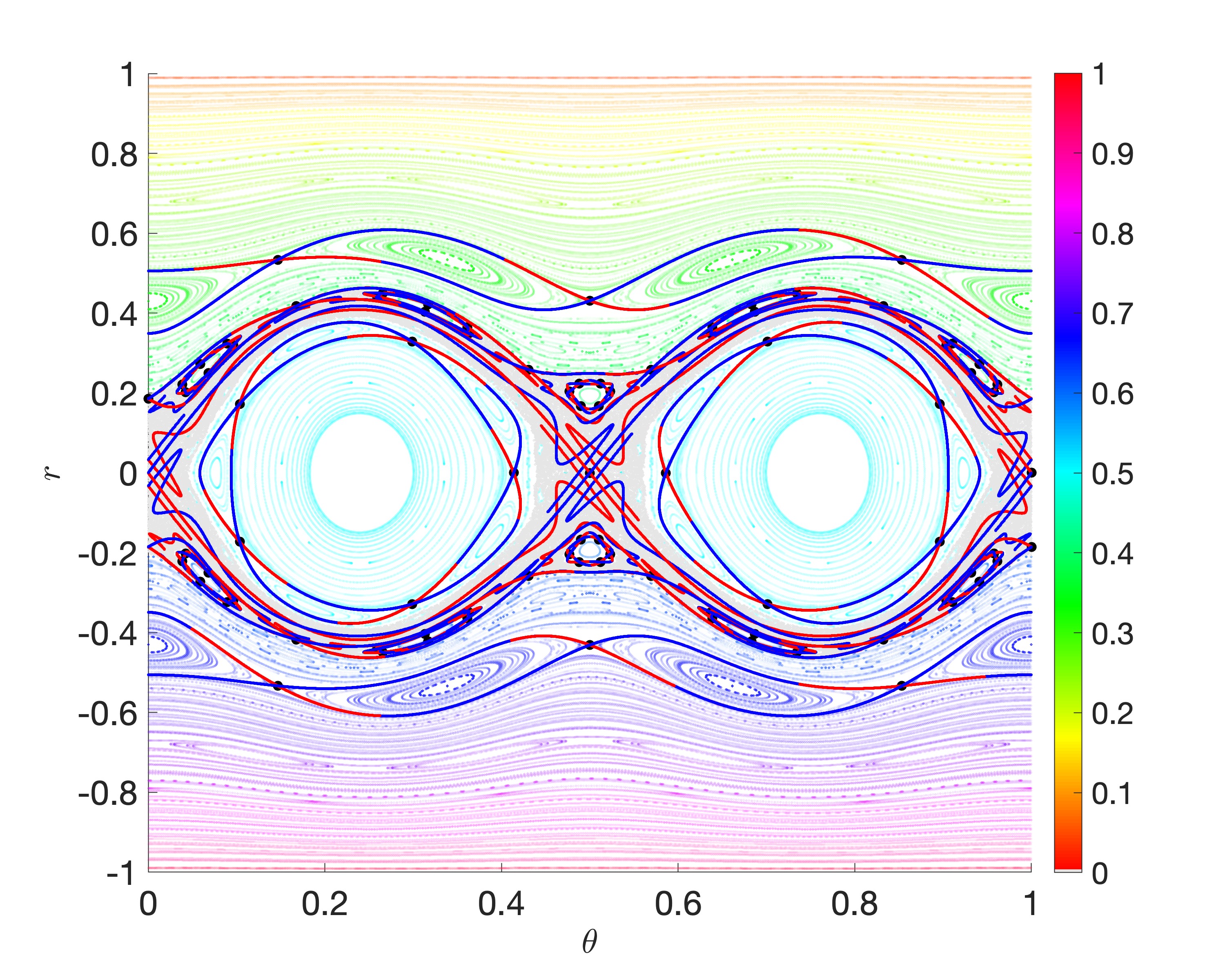}
 \caption{This figure shows the stable and unstable manifolds and their intersections for periodic orbits with a variety of periods.  This data is for  the same perturbed elliptical billiard table as shown in Fig.~\ref{fig:TableOrbits}, (Table A in Tbl.~\ref{table:coeff}).  \label{fig:PS1}}
\end{center}
\end{figure}
\begin{figure}[tbh!]
\begin{center}
 \includegraphics[width=0.45\textwidth]{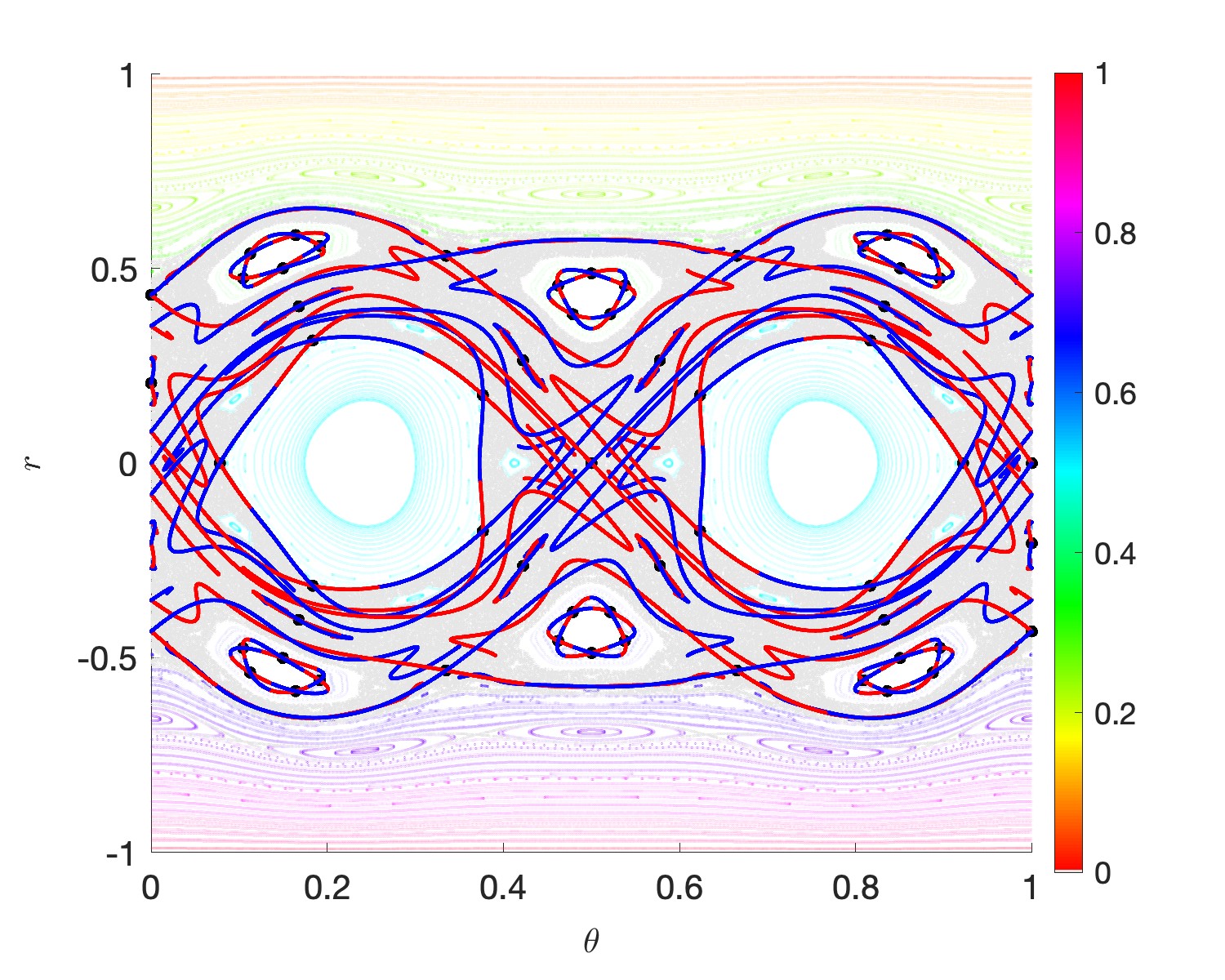}
  \includegraphics[width=0.45\textwidth]{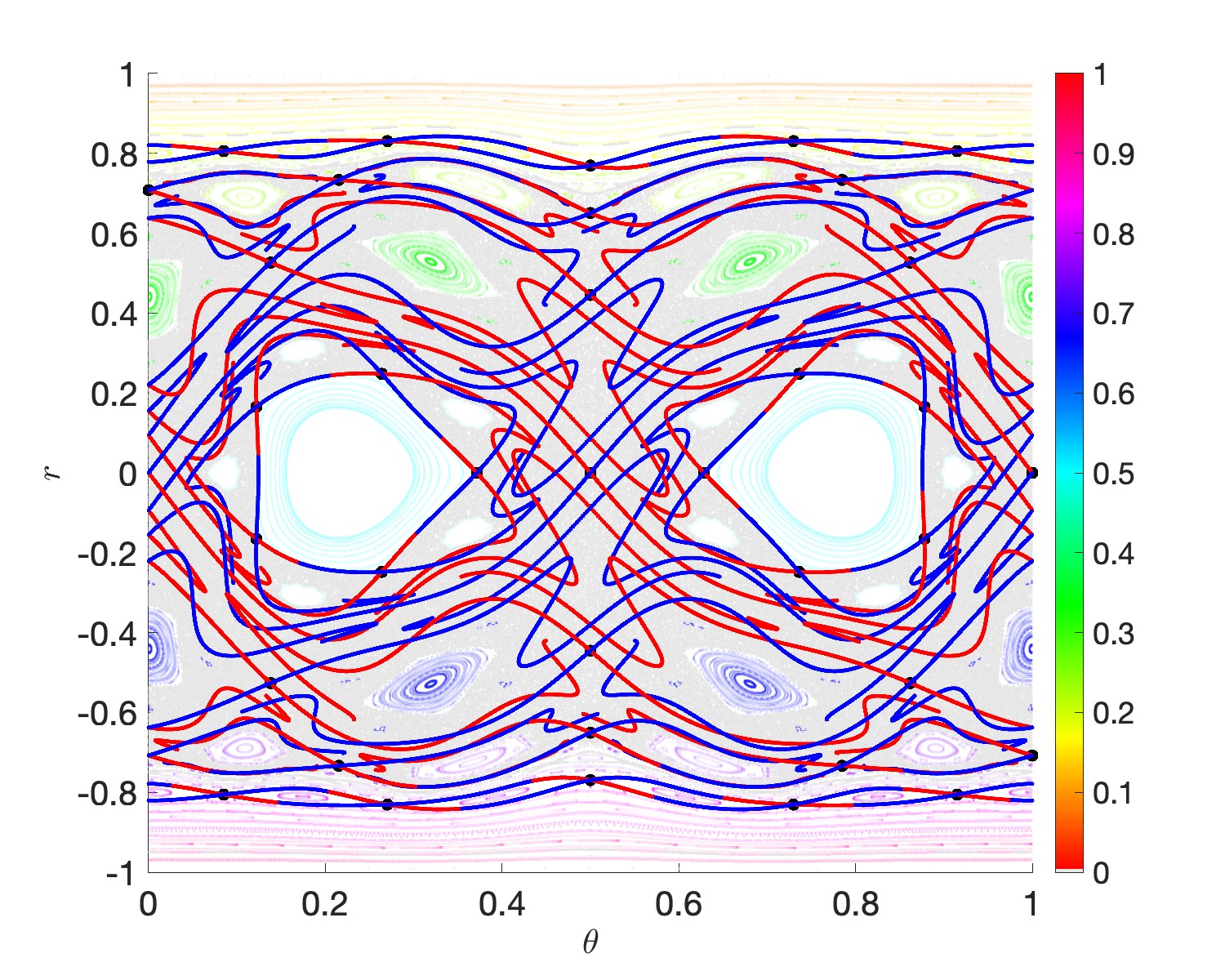}
\caption{The figure shows stable and unstable manifolds for periodic orbits computed for Tables B and C (cf. Tbl.~\ref{table:coeff}), larger perturbations of 
the same ellipse as in Figs.~\ref{fig:TableOrbits} and~\ref{fig:PS1}. The phase space in the background is colored by frequency for regular orbits and gray for chaotic orbits. 
\label{fig:PS2} }
\end{center}
\end{figure}
\begin{figure}[tbh!]
\begin{center}
 \includegraphics[width=0.45\textwidth]{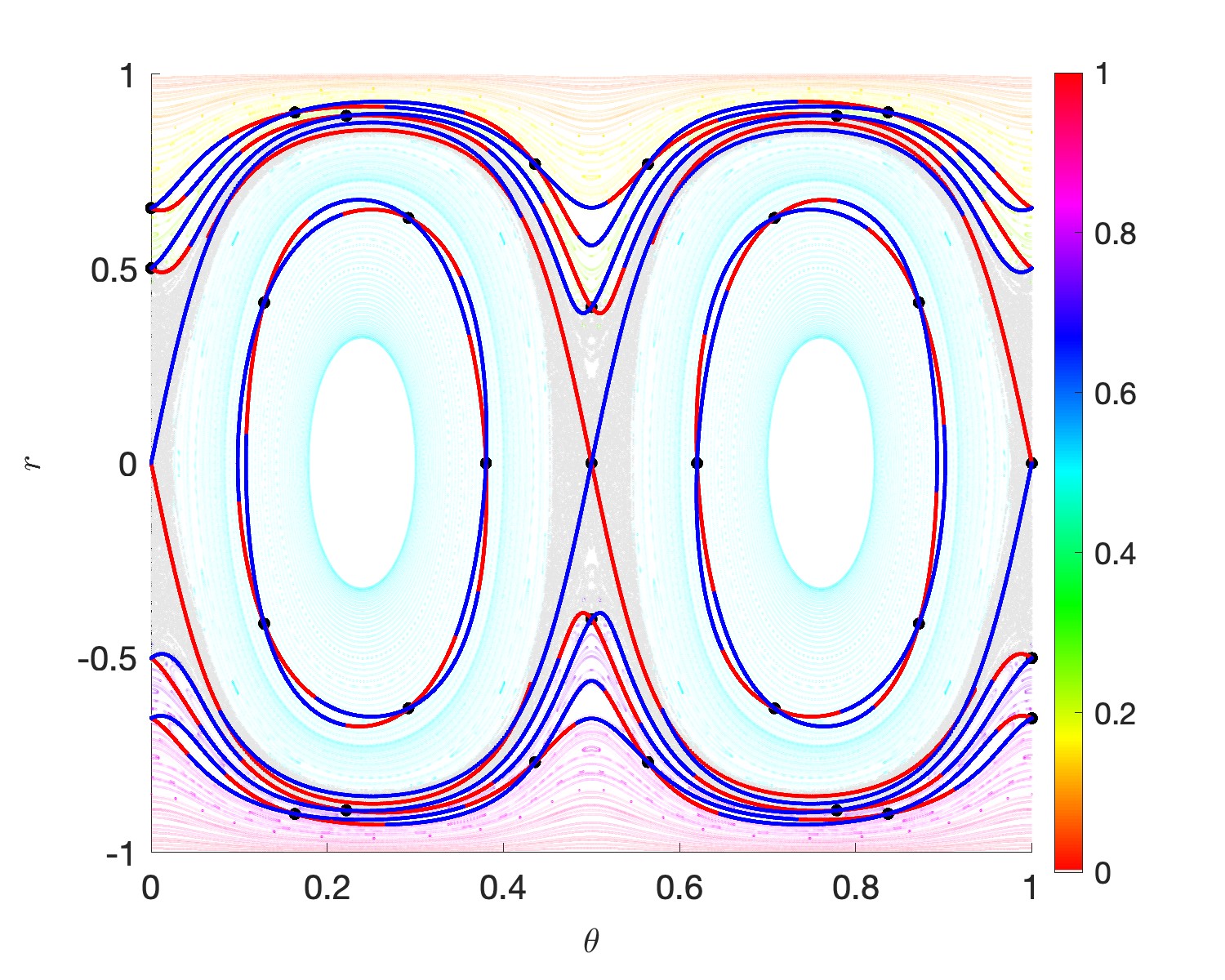}
 \includegraphics[width=0.45\textwidth]{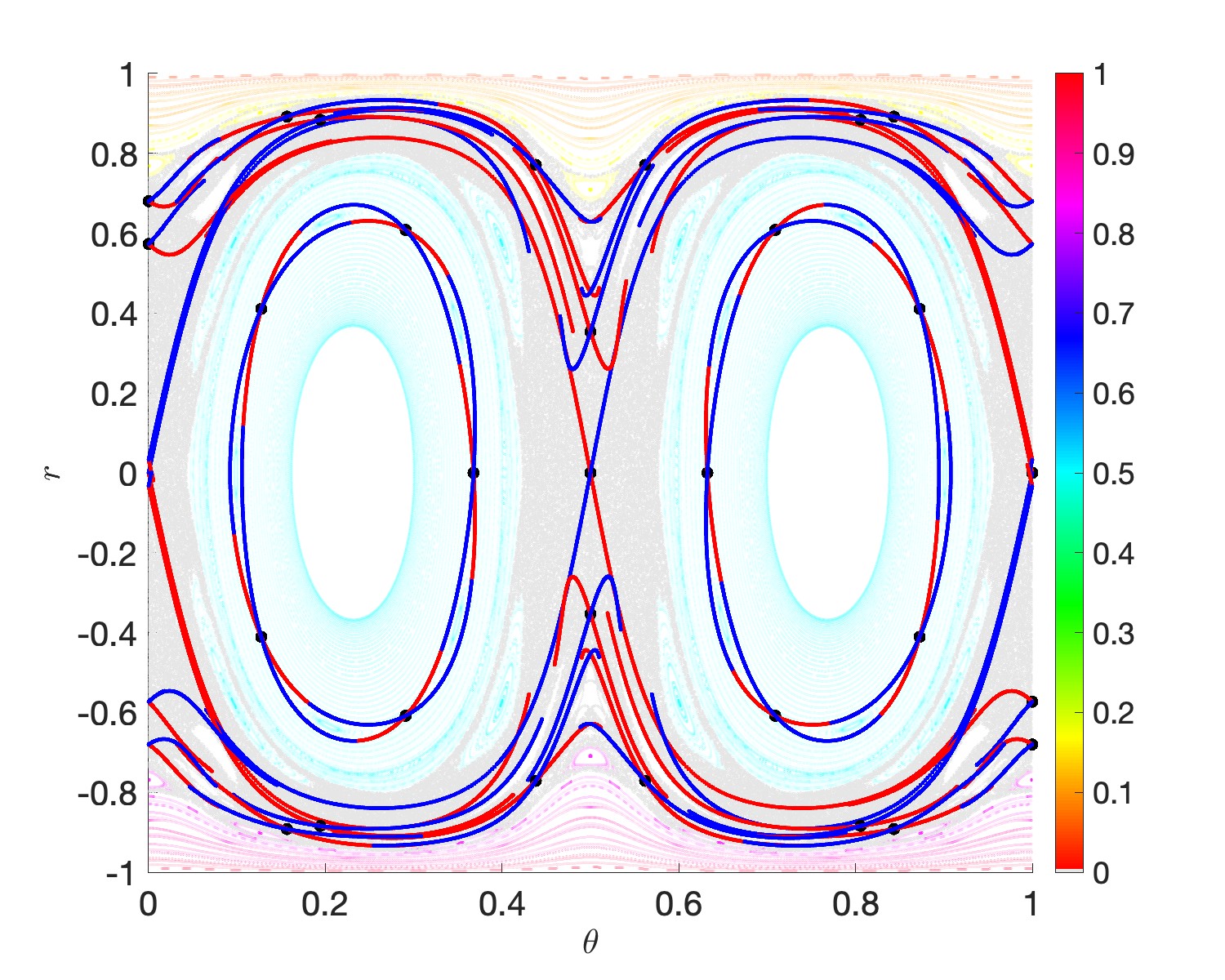}
\caption{The figure shows stable and unstable manifolds for periodic orbits computed for Tables D and E (cf. Tbl.~\ref{table:coeff}).
These tables are perturbations of an ellipse with larger eccentricity. The phase space in the background is colored by frequency for regular orbits and gray for chaotic orbits.   \label{fig:PS3}}
\end{center}
\end{figure}
%


The computational scheme is iterative, and  
a key step requires computing the power series 
of the billiard map composed with the current 
polynomial approximation.
If the billiard map were explicitly polynomial, then  
such compositions could be worked out explicitly via Cauchy products, 
resulting in explicit recursion relations power series coefficients.
See for example the worked problems in 
\cite{MR3706909,fleurantin2020resonant}, and also 
Appendix \ref{sec:manipulationPowerSeries} of the current work.
Non-polynomial nonlinearities are often dealt with by appending 
differential equations describing the nonlinear terms.  
Several examples of this procedure are given in the first reference just cited,
and the procedure is reviewed in Appendix \ref{sec:manipulationPowerSeries}.
Moreover, formal series techniques for simple implicitly defined 
maps are also possible, as illustrated in \cite{timsina_parameterized_2022}.  

However, when the implicit map is complicated enough, the techniques of 
the references just cited are unwieldy.  Examples of 
such situations include Poincar\'{e} maps associated with 
differential equations, and also the billiard 
maps studied in the present work.
In such situations, an alternative is to compute the 
desired composition (of a power series with the dynamical map) 
using methods of interpolation.  Moreover, when  
interpolating analytic functions, the discrete Fourier transform 
(DFT) is known to provide especially accurate, stable, and 
efficient interpolation, especially when combined with 
the Fast Fourier Transform (FFT) algorithm. 
These techniques are also reviewed in 
Appendix \ref{sec:manipulationPowerSeries}.

A complication with the DFT approach is that the map must 
must be evaluated in the complex plane, and this may involve
non-trivial modifications of its numerical implementation.
We refer to the recent works of Kumar \cite{MR4848687,MR4361879,MR4209693}, 
 where the parameterization method is 
implemented for Poincar\'{e} sections in problems from 
Celestial Mechanics, and where it is first necessary to extend the 
numerical integration, and then the Poincar\'{e} section, 
to complex variables via analytic continuation.

In the present work, we undertake a similar program for
billiard maps.  That is, to use the DFT, we first
extend the billiard map into a complex domain 
(in both variables).  Since the map is only implicitly defined, 
this involves numerical analytic continuation.  
Note, we do not claim the complex billiard map has any physical 
significance: it is simply a numerical convenience, as
the manifolds we compute are real.  This said, it is interesting to 
note that analytic continuation for billiards has appeared 
in the theory before, and we refer for example to the discussion 
following Remark 9 in the introduction of the paper
\cite{Kaloshin_Sorrentino_2018a}, where the authors explain 
that the main idea of their proof is to analyze the singularities
in the complex plane associated with the action angle 
form of perturbed elliptic billiards.
The coordinates used in the reference just cited
are chosen for their theoretical properties, and do 
not appear to be easy to work with numerically.  The coordinates 
used in the present work, on the other hand, are chosen 
to simplify the resulting numerical calculations.  
This work builds on results in the PhD Thesis of the first author \cite{bishop24}.

\medskip

The remainder of the paper is structured as follows:  
Section~\ref{sec:definitions} introduces the 
setup, definitions, notation, and prior results. Section~\ref{sec:numerics1} gives 
the details of our numerical methods for iterating the billiard map on any 
perturbed elliptical table with smooth boundaries that is convex. 
In Section~\ref{sec:planarMaps}, we discuss how we compute the stable 
and unstable manifolds for periodic orbits. 
In Section~\ref{sec:results}, we detail our results, presenting 
computed stable and unstable manifolds for periodic orbits on 
five different billiard tables shown in 
Figures~\ref{fig:manifolds_p10}--\ref{fig:manifold_per2tab4_10it}. Section~\ref{sec:conclusions} 
contains our conclusions and an outline of future work.

\section{Definitions, background, and prior results}\label{sec:definitions}

This section introduces the definitions and notation 
along with a review of prior results. 


\subsection{Dynamical billiards -- from physical table to phase space}\label{sec:dyna}

We focus on connected, strictly-convex, planar domains with 
smooth (in fact real analytic) boundaries, as done in \cite{Himmelstrand2013}.
Such a billiard system is given by the following data. 
\begin{definition}\label{bill_system}
A billiard system consists of a point particle confined to move in a 
connected, strictly convex, and closed planar 
domain $\mathcal{D} \subset \R^2$ with boundary curve $\Gamma$. 
We assume that $\Gamma$ is a smooth non-self-intersecting curve. 
In particular, $\Gamma$ is sufficiently smooth that it can be 
parameterized by a real analytic map $B: [0,1] \to \R^2$,  
where $B(0) = B(1)$, and $B'(\theta)$ is a 
nonvanishing tangent vector. 
We assume that $B$ has counter-clockwise orientation on $\Gamma$.

The point particle moves within domain $\mathcal{D}$ according to the following criteria. 
\begin{enumerate}
\item The trajectory curve $G(t) \subset \mathcal{D}$ for all $t \ge 0$. We require $G(t)$ to be a continuous union of line segments, such that for any time interval for which $G$ is in the interior of $\mathcal{D}$, $\mathbf{v}(t) = G'(t)$ is constant, and $|\mathbf{v}(t)| = 1$.  
\item The trajectory of the particle begins at $t=0$ at a point  $G(0) \in \Gamma$ on the boundary of the domain.
\item Whenever the trajectory $G(t)$ reaches the boundary,  it is reflected elastically back into the interior of $\mathcal{D}$ according to the standard Snell's law of reflection. 
\end{enumerate}
\end{definition}
Based on the above definition, the particle trajectory is fully determined by the location of the boundary collisions, along with the direction of vector $\mathbf{v}$ after the collision. 
Let $\{t_k\}_{k=0}^\infty$  be a discrete increasing sequence of times at which the particle hits the boundary
$\Gamma$.
Define $\theta_k$ by $G(t_k) = B(\theta_k)$. 
Let $\gamma_k$ be the angle that $\mathbf{v}(t)$ makes with the tangent vector $B'(\theta_k)$ immediately after the bounce, and let \[r = \cos{\gamma}, \mbox{ where } -1<r<1. \]
Then the vector $(\theta_k,r_k)$ defines the $k$-th point of collision\footnote{To avoid cumbersome notation, when we talk about a vector, unless otherwise specified, we implicitly assume that it is a column vector.}.
The restriction on $r$ is due to the fact that $|r| \ge 1$ corresponds to the particle  leaving the convex billiard table. 

For a given boundary parametrization $B(\cdot)$, there is a uniquely determined diffeomorphism $f$ on $S^1 \times (-1,1)$, which is as smooth as the boundary, such that 
\[
f \begin{pmatrix} \theta_k\\ r_k \end{pmatrix} = \begin{pmatrix}
    \theta_{k+1} \\
    r_{k+1}
\end{pmatrix}.
\]
%
%
Before moving to the general case, we review the simplest smooth tables. 
\begin{lemma}[Circular Table]
The billiard map on a circular table with boundary parameterized by
\[B(\theta) = \begin{pmatrix}
    \cos (2 \pi \theta) \\
     \sin (2 \pi \theta)
\end{pmatrix}, \; 0\le \theta<1\] has 
the following explicit solution. Starting 
at the point $(\theta_0,r_0)$, it is given by 
\[ f \begin{pmatrix} \theta\\ r \end{pmatrix} = \begin{pmatrix}
    \theta + C \\ r_0
\end{pmatrix},   \]
where $C = \arccos (r_0)/ \pi$ depends only on the initial condition. 
\end{lemma}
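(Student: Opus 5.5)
The plan is to do the computation directly in the plane, using the geometry of chords of the unit circle. Start at $P_0 = B(\theta_0)$. The unit tangent there is
\[
T_0 = \begin{pmatrix} -\sin(2\pi\theta_0)\\ \cos(2\pi\theta_0)\end{pmatrix},
\]
and the inward normal is $-B(\theta_0)$. With $r_0=\cos\gamma_0$ and $\gamma_0\in(0,\pi)$, the outgoing velocity after the bounce is
\[
\mathbf{v}=\cos\gamma_0\, T_0 - \sin\gamma_0\, B(\theta_0),
\]
which is a unit vector pointing into the disk. The trajectory is $P_0+t\mathbf{v}$, and the next collision is the positive root of $|P_0+t\mathbf{v}|^2=1$. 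Since $|P_0|=|\mathbf{v}|=1$, this reduces to $t^2+2t\,(P_0\cdot\mathbf{v})=0$. Because $P_0\cdot\mathbf{v}=-\sin\gamma_0$, the chord length is $t_1=2\sin\gamma_0>0$.

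Next I identify the landing point. Writing everything in the rotated frame $(T_0,-B(\theta_0))$ is the cleanest way to do this. In that frame
\[
P_1=B(\theta_0)+2\sin\gamma_0\cos\gamma_0\,T_0-2\sin^2\gamma_0\,B(\theta_0)=\cos(2\gamma_0)B(\theta_0)+\sin(2\gamma_0)T_0 .
\]
This is exactly the rotation of $B(\theta_0)$ counterclockwise by angle $2\gamma_0$. Hence $P_1=B(\theta_0+\gamma_0/\pi)$, so $\theta_1=\theta_0+\arccos(r_0)/\pi$ modulo $1$. That gives the constant $C$.

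For the second component I use the isosceles triangle $O P_0 P_1$. The chord makes equal angles with the radii at both ends, so it also makes equal angles with the tangents at both ends. Concretely, I compute the angle between the incoming direction $\mathbf{v}$ and the tangent $T_1$ at $P_1$. One has $\mathbf{v}\cdot T_1=\cos\gamma_0$, which follows by rotating the frame by $2\gamma_0$ and expanding. Reflection by Snell's law preserves the tangential component of velocity. So the outgoing angle with $B'(\theta_1)$ satisfies $\cos\gamma_1=\cos\gamma_0$, that is, $r_1=r_0$. By induction the same $C$ is added at every step, because $r$ is invariant. This justifies writing $C$ in terms of the initial condition and gives the displayed formula for $f$.

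The only delicate step is the sign and orientation bookkeeping. I need to confirm that $\gamma$ measured from the counterclockwise tangent produces a counterclockwise rotation by $2\gamma_0$, rather than $-2\gamma_0$. I also need to confirm that the tangential-component argument gives $\cos\gamma_1=+\cos\gamma_0$ and not its negative. I would settle both by checking the expansion $\mathbf{v}\cdot T_1$ explicitly and sanity-checking the case $\gamma_0=\pi/2$. In that case the shot is a diameter, and the formula gives $C=1/2$ with $r=0$, as it should.
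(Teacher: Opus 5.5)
Your argument is correct and complete. The sign bookkeeping you flag comes out exactly as you wrote it. Under the paper's convention $\mathbf{v}=(\cos(\gamma+\tau),\sin(\gamma+\tau))$, the vector $\mathbf{v}$ is the unit tangent rotated counterclockwise by $\gamma_0$, which is $\cos\gamma_0\,T_0-\sin\gamma_0\,B(\theta_0)$. Since $T_1=\cos(2\gamma_0)T_0-\sin(2\gamma_0)B(\theta_0)$, you get directly $\mathbf{v}\cdot T_1=\cos\gamma_0\cos 2\gamma_0+\sin\gamma_0\sin 2\gamma_0=\cos\gamma_0$.

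The paper does not prove this lemma at all; it attributes the result to Birkhoff \cite{Birkhoff_1927}. Your route is therefore a self-contained verification where the paper appeals to the literature. The citation buys brevity. Your computation confirms that the formula holds in the paper's own conventions, which is where slips could occur:
\begin{itemize}
\item a planar rotation by $2\gamma_0$ becomes the shift $\gamma_0/\pi$ because $B$ has period one;
\item $\gamma$ is measured from the counterclockwise tangent;
\item $r_1$ is the cosine of the angle between the incoming direction and $B'(\theta_1)$, exactly as in the paper's numerical rule $\hat r=\cos(\hat\rho-\hat\tau)$.
\end{itemize}
Your proof also fits the paper's treatment of elliptical tables. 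Your chord length $t_1=-2\,P_0\cdot\mathbf{v}=2\sin\gamma_0$ is precisely the paper's $s_*$ from \eqref{eq:ellipsesstar} with $a_1=b_1=1$. So your argument is the circular specialization of that explicit ellipse iteration, with the step recovering $\hat\theta$ made trivial because $B(\theta_1)$ is a rotation of $B(\theta_0)$.
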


This implies that for the circle, orbits consist of rigid rotation on horizontal lines in the phase space $\theta$ versus $r$.
 The proof  of this result is due to Birkhoff~\cite{Birkhoff_1927}. 

\begin{lemma}[Ellipse]
For any ellipse, the function $f$ is fully integrable. All orbits are quasiperiodic or periodic orbits, and all solutions lie along smooth curves.
\end{lemma}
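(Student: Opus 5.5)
The plan is to exhibit an explicit conserved quantity (the Joachimsthal integral) and then use the standard theory of integrable area-preserving twist maps to describe the invariant curves. Write the ellipse as $x^2/a^2 + y^2/b^2 = 1$ with $a \ge b > 0$ (the circle $a=b$ is already handled by the previous lemma). For a billiard trajectory segment, let $\mathbf{v}$ be the unit direction after the bounce at the point $p$. I will show that
\[ J(p,\mathbf{v}) = \frac{p_1 v_1}{a^2} + \frac{p_2 v_2}{b^2}, \]
taken up to sign, is preserved by the billiard map. Equivalently, and more geometrically, I will show that every segment of the trajectory is tangent to a fixed confocal conic $x^2/(a^2-\lambda) + y^2/(b^2-\lambda) = 1$, its caustic. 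The parameter $\lambda$ is a smooth function of $(\theta,r)$ and is invariant under $f$.

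The first step uses the classical reflection property of confocal conics. The reflection law at $p$ says that the normal to the ellipse bisects the angle between the incoming and outgoing segments. The confocal family has the same foci $F_\pm$, and the tangent lines from $p$ to a given confocal conic are symmetric with respect to the normal at $p$. This symmetry is the optical (focal) property, proved by showing that the angle bisectors of the lines $pF_+$ and $pF_-$ coincide with the normal.

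From this, the incoming line is tangent to the confocal conic $C_\lambda$ if and only if the outgoing line is. By induction, $\lambda$ is constant along orbits. For lines through the segment $F_-F_+$, the family degenerates: the lines pass through the foci, and they continue to do so after reflection. Alternatively, one can verify $J$ directly. Using $p_{k+1} = p_k + t\mathbf{v}_k$ and the reflection formula $\mathbf{v}_{k+1} = \mathbf{v}_k - 2(\mathbf{v}_k\cdot n)n$ with $n \propto (p_1/a^2, p_2/b^2)$, this reduces to a short algebraic identity. I would include this computation as a check.

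Next, I express the invariant in phase coordinates, $I(\theta,r) = \lambda(B(\theta), \mathbf{v}(\theta,r))$. I will check that $\nabla I \neq 0$ away from the finitely many singular levels. These are the separatrix level, where lines pass through the foci, and the elliptic points at the minor-axis 2-periodic orbit. It follows that the level sets of $I$ foliate the open cylinder into smooth curves. The regular compact components are homotopically nontrivial circles (rotational curves, with confocal-ellipse caustics) or contractible ovals around the minor-axis 2-cycle (confocal-hyperbola caustics). In addition, there is a figure-eight separatrix through the major-axis 2-cycle. This is the sense in which all solutions lie along smooth curves.

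The remaining step is to show that on each regular invariant curve the dynamics is conjugate to a rigid rotation, hence quasiperiodic or periodic. I expect this to be the main obstacle. My first approach will be the one-degree-of-freedom Liouville--Arnold argument for area-preserving maps. Since $f$ preserves the area form $d\theta\wedge dr$ (up to the standard factor), the restriction of $f$ to an invariant circle $\{I=c\}$ preserves the induced measure $d\theta\,dr/dI$. In the normalized coordinate this gives an orientation-preserving circle diffeomorphism with an invariant smooth positive density, so it is smoothly conjugate to a rotation by the rotation number $\rho(c)$. Orbits on the curve are therefore periodic when $\rho(c)$ is rational and quasiperiodic otherwise. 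The circle case is consistent with this, since there $\rho = \arccos(r_0)/\pi$. Two further points need care: the separatrix and the isolated periodic points must be treated separately, and for oval components one must check that $f$ or a power of $f$ maps each oval to itself.
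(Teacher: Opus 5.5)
The paper gives no proof of this lemma. It is quoted as classical background; the only related material is the explicit elliptic-billiard formula in Section~\ref{sec:numellipse}. So your argument has to stand on its own, and most of it does.

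With $B(\theta)=(a\cos 2\pi\theta,\, b\sin 2\pi\theta)$, the Joachimsthal integral satisfies
\[
J^2=(1-r^2)\left(\frac{\cos^2(2\pi\theta)}{a^2}+\frac{\sin^2(2\pi\theta)}{b^2}\right),\qquad \lambda=a^2b^2J^2 .
\]
So $\lambda$ is indeed smooth. Its only critical points are the saddles $(0,0),(1/2,0)$ (major axis) and the maxima $(1/4,0),(3/4,0)$ (minor axis). The Leray-form argument on regular levels then correctly gives smooth conjugacy to a rotation, using $f^2$ on the librational ovals, which $f$ interchanges.

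Prefer your direct computation of $J$ over the caustic argument, which as written is incomplete. The optical property of the ellipse alone does not give the symmetry of the two tangents from $p$ to $C_\lambda$ about the normal. You also need that these tangents are equally inclined to $pF_+$ and $pF_-$. In addition, the degenerate case consists of lines through a focus. Lines crossing the open segment $F_-F_+$ are tangent to confocal hyperbolas.

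\textbf{The gap.} The step you postpone (``the separatrix \ldots must be treated separately'') cannot be completed, because the lemma is false on that level. For $a>b$ the level $\lambda=b^2$ is the pair of smooth closed curves
\[
r=\pm\frac{\kappa\sin(2\pi\theta)}{\sqrt{1+\kappa^2\sin^2(2\pi\theta)}},\qquad \kappa^2=\frac{a^2}{b^2}-1 .
\]
These are two non-contractible circles crossing transversally at the hyperbolic major-axis 2-cycle, not a figure-eight. The level is simultaneously the stable and the unstable manifold of that cycle. Every orbit on it other than the 2-cycle itself is one of the remaining focal trajectories. Such an orbit tends to the major axis in both forward and backward time, so it is neither periodic nor quasiperiodic.

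What your method proves, and what is true, is the following:
\begin{itemize}
\item $f$ is integrable with first integral $J$;
\item every orbit off the level $\lambda=b^2$, together with the major-axis 2-cycle, is periodic or quasiperiodic;
\item every orbit, including the focal ones, lies on a smooth curve, namely a regular level component or one of the two branches above.
\end{itemize}
For the circle, $a=b$, there is no separatrix and the statement holds as written. You should state this corrected version explicitly rather than promise to handle the separatrix later.
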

We will give more details on how to compute $f$  in the next section. 


\subsection{Birkhoff conjecture}\label{sec:birkhoff}

Billiards on ellipses (including the special case of a circle) have simple nonchaotic behavior. Birkhoff conjectured that these are the only shapes for which this is true. 
 
\begin{theorem}[Birkhoff Conjecture]\label{the:birkhoff}
For a convex smooth boundary, if the billiard 
system is completely integrable, it must be an ellipse. 
\end{theorem}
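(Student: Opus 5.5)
The statement above is the Birkhoff conjecture. In the stated generality it is open, so what follows is a strategy that might fail rather than a proof. It reduces to the local rigidity results cited in the introduction, and it makes explicit the step that is not currently available.

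\emph{Setting up the hypothesis.} I will read ``completely integrable'' in the strongest usable sense: a neighbourhood of the boundary $\{|r|=1\}$ in $S^1\times(-1,1)$ is foliated by smooth invariant curves of $f$. For every rational rotation number $p/q\in(0,1/2)$ in that range, the corresponding leaf is then a rational caustic. On such a leaf every point lies on a $q$-periodic orbit of $f$, and the length (action) $L_{p/q}$ of these orbits is the same for every starting point $\theta$.

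\emph{Step 1: normal form near the boundary.} I will pass to Lazutkin coordinates near $r=\pm1$, where $f$ becomes a near-rotation. Using the Marvizi--Melrose asymptotics, I will express the periodic-orbit length functional along a $p/q$ caustic as an expansion in powers of $1/q$. The coefficients of this expansion are integrals of the curvature of $\Gamma$ and its derivatives.

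\emph{Step 2: linearise.} Write $\Gamma$ as a perturbation $\Gamma_e+\varepsilon n$ of an ellipse $\Gamma_e$ of eccentricity $e$, with normal deformation $n$ expressed in elliptic coordinates. Preservation of the rational caustics for every $q\ge 3$ forces the derivative of $L_{p/q}$ in $\theta$ to vanish to first order in $\varepsilon$. This yields linear functionals $\ell_q(n)=0$, which I will compute using the explicit action-angle variables of the integrable elliptic billiard (the Ellipse lemma).

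\emph{Step 3: local rigidity.} Following Avila--De Simoi--Kaloshin and Kaloshin--Sorrentino, I will show that the family $\{\ell_q\}_{q\ge3}$, together with the five-dimensional tangent space of deformations that preserve ellipses, spans a dense subspace. The mechanism is that the $\ell_q$ are ``elliptic-motion deformed'' Fourier modes, and for small $e$ they form a basis up to a compact correction. This forces $n$ to be tangent to the space of ellipses. An implicit function theorem argument, using the $C^{39}$-type estimates those authors use, would then show that any integrable table $C^k$-close to an ellipse is itself an ellipse.

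\emph{Main obstacle: from local to global.} Steps 1--3 only give rigidity near ellipses, and only for small eccentricity unless the analytic continuation of the action-angle singularities is carried out as in \cite{Kaloshin_Sorrentino_2018a}. For the global statement I would first try Bialy's integral-geometric method. For a table foliated everywhere by invariant curves, integrating $\log$ of the Jacobi-field term over phase space forces constant curvature. The task would be to adapt that integral inequality to a foliation that exists only near the boundary, along the lines of Bialy--Mironov, who settle the centrally symmetric case down to rotation number $1/4$. Removing both the symmetry assumption and the restriction on the rotation number is exactly where I expect the argument to break. I do not currently see how to close that gap, which is why this paper uses the statement only as motivation.
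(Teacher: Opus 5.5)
The paper contains no proof of this statement. Although it is typeset as a theorem, it is the Birkhoff conjecture, which is open in the stated generality. The paper records it only as motivation and cites partial results (Delshams--Ram\'{\i}rez-Ros, Avila--De Simoi--Kaloshin, Kaloshin--Sorrentino, Bialy--Mironov).

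Your proposal is accordingly not a proof, and the missing step is the one you name. The gap is structural, not technical:

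\emph{The perturbative route cannot globalize.} Steps 1--3 linearize the caustic-preservation conditions at a fixed ellipse. Even granting all of \cite{Avila_De_Simoi_Kaloshin_2016,Kaloshin_Sorrentino_2018a}, they give rigidity only in a $C^k$-neighbourhood of the family of ellipses. They say nothing about an integrable table far from every ellipse. No open--closed continuation is available either, since nothing is known to connect an arbitrary integrable table to an ellipse through integrable tables.

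\emph{The integral-geometric route fails for a different reason.} Bialy's argument needs the whole phase cylinder foliated by non-contractible invariant curves, and it then forces a circle. A non-circular ellipse violates that hypothesis: its rotational curves end at the separatrices of the hyperbolic $2$-periodic orbit on the major axis, with librational curves inside. So any adaptation must control what happens at the inner edge of a partial foliation. In \cite{Bialy_Mironov_2022a} this is exactly where central symmetry and the structure of the invariant curve of rotation number $1/4$ are used, and no general substitute is known.

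Under the literal strongest reading of ``completely integrable'' as total rotational foliation, the statement does follow from Bialy's theorem. But it does so only because that reading excludes non-circular ellipses, so it is not the conjecture the paper intends.

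There is also an inconsistency inside your local argument. You assume only that a neighbourhood of $|r|=1$ is foliated. That gives rational caustics only for rotation numbers below some threshold, and hence functionals $\ell_q$ only for $q\ge q_0$. Step 2, however, uses $\ell_q$ for every $q\ge 3$. Without the range $3\le q<q_0$, the basis argument of Step 3 leaves a finite-dimensional family of deformations transverse to the ellipses that it cannot exclude. You need one of two fixes:
\begin{enumerate}
\item Assume caustics of rotation number $1/q$ for all $q\ge 3$. This is the rational integrability hypothesis of \cite{Avila_De_Simoi_Kaloshin_2016}, and integrability in the sense ellipses enjoy does supply it, since their rotational region extends up to rotation number $1/2$.
\item Extract the missing constraints from higher-order terms in an expansion in the eccentricity. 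This has been done only under additional restrictions such as small eccentricity.
\end{enumerate}
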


Birkhoff first discussed the two-dimensional billiard system with 
convex boundaries~\cite{Birkhoff_1927}. 
In 1995, Delshams and Ram\'{i}rez-Ros~\cite{delshams_ros} proved that 
any non-trivial, symmetric perturbation of the ellipse
 is not integrable.
More recently, Avila et al.~\cite{Avila_De_Simoi_Kaloshin_2016}, 
 proved the Birkhoff conjecture for tables that are 
 perturbed ellipses with small eccentricity. This was extended by Kaloshin and Sorrentino~\cite{Kaloshin_Sorrentino_2018a}. Proofs of local versions of the conjecture are given in~\cite{delshams_ros, Kaloshin_Sorrentino_2018}. 
Bialy and Mironov~\cite{Bialy_Mironov_2022a}  proved the
Birkhoff conjecture for centrally-symmetric
$\mathcal{C}^2$-smooth convex planar billiards.
Baracco and Bernardi~\cite{Baracco_Bernardi_2024} proved that a totally 
integrable strictly-convex symplectic billiard table,
 whose boundary has strictly positive curvature, must 
 be an ellipse. For more
  discussion on recent results and open problems in Birkhoff billiards, 
  refer to \cite{Bialy_Tabachnikov_2022,kaloshin:sorrentino:22, Levi_Tabachnikov_2007}.


\subsection{Related numerical work}\label{sec:prevnum}
  There are many existing methods for numerically computing  billiard maps. 
Levi and Tabachnikov~\cite{Levi_Tabachnikov_2007} develop a method for convex tables that utilizes the minimization property to find the point of next contact of the particle with the boundary. 
An issue with this method is that the minimization problem has two solutions. For trajectories that are close to tangent, it is hard to distinguish the spurious from the correct solution. 


Lansel and Porter \cite{Lansel_Porter_2004} and \cite{Turaev_2016} simulate classical billiard systems that are not necessarily convex. Both methods are computationally slow, as they directly track the full trajectory of the ball. Solanp{\"a}{\"a} et al.~\cite{Solanpaa_Luukko_Rasanen_2016} consider more general nonconvex billiard systems for two-dimensional tables. They simulate models for various billiards and diffusion models including those with multiple charged particles and those subject to magnetic fields. 
In ~\cite{daCosta_Hansen_Silva_Leonel_2022}, da Costa et al. describe billiard dynamics on oval-like tables using a polar equation, with a circle approximation to estimate the next point of contact with the boundary. 

There are a number of other works with methods for computing billiards maps. See~\cite{  Anderson_2021,Bordeianu_Felea_Besliu_Jipa_Grossu_2011b,Church_2021,Datseris_2017, Knill_1998, Martin_2016,Pnueli_2022,Plum_2023, Reznik_2021}.  None of these  previous works includes computation of invariant manifolds for billiard maps, and they are not ideally suited for that  purpose, meaning that we had to create a new numerical method rather than being able to use one of the existing methods.




\section{Numerical computation of the billiard map}\label{sec:numerics1}

We now discuss our numerical procedure for evaluating 
the real billiard map.


\subsection{Table shapes}\label{sec:tableshapes}
Note that an ellipse with eccentricity $\sqrt{1 - b_1^2/a_1^2}$ 
has parameterization 
\begin{equation}\label{eq:ellipsetable}
E(\theta) = \begin{pmatrix}
    a_1 \cos (2 \pi \theta) \\ b_1 \sin (2 \pi \theta)
\end{pmatrix}. 
\end{equation}
We focus on perturbed ellipses of the form 
$B(\theta) = (x(\theta),y(\theta))$ 
with Fourier series given by  
\begin{equation}\label{eq:pert_ell}
\begin{split}
x(\theta ) &= a_{1,1} \cos{(2\pi \theta )} + \sum_{k = 2}^n a_{k,1} \cos{(2k\pi \theta)} + b_{k,1} \sin{(2k\pi \theta)} \\
y(\theta ) &= b_{1,2} \sin{(2\pi \theta)} + \sum_{k = 2}^n a_{k,2} \sin{(2k\pi \theta )} +  b_{k,2} \sin{(2k\pi \theta )}.
\end{split}
\end{equation}

\begin{table}[tbph!]
    \centering
    \small{
    \begin{tabular}{|c|c|c|c|c|c|}     \hline
        \textbf{Table} & A & B & C & D & E  \\ \hline
        $(a_{1,1},a_{2,1},a_{3,1})$ & (1.1,0.03,0) & (1.1,0.05, 0.00015) & (1.1, 0.08, 0.0002) & (2, 0.04,0) & (2, 0.05,0) \\ \hline 
        $(b_{1,2},b_{2,2},b_{3,2})$ & (1,0.03,0) & (1,0.035, 0.0001) & (1, 0.095, 0.0001) & (1, 0.035,0) & (1, 0.065,0) \\ \hline
        Eccentricity & 0.4583 & 0.4583 & 0.4583 & 1.7321 & 1.7321
        \\ \hline 
    \end{tabular}  
    }
          \vspace{.1in}
    \caption{The coefficients used for our choice of perturbed elliptical billiard tables, referred to as Billiard Tables A--E. Any unspecified coefficients are zero. The bottom row states the 
    eccentricity of the associated unperturbed ellipse. 
    \label{table:coeff}}
\end{table}

Notice that for a table specified by~\eqref{eq:pert_ell}, if $a_{1,1}$ and $b_{1,2}$ are the only nonzero coefficients, then the table is an ellipse. In particular, for any table, the {\em ellipse associated with this table} is the ellipse with the same $a_{1,1}$ and $b_{1,2}$ values (all other coefficients set to zero). 
The particular tables we have used for our numerics are referred to as Billiard Tables A--E. The coefficients $a_{k,i}$ and $b_{k,i}$ are given in Tbl.~\ref{table:coeff}. For Tables A-C, the associated ellipse has $a_{1,1} = 1.1$ and $b_{1,2} = 1$. For Tables D and E the associated ellipse as $a_{1,1} = 2$ and $b_{1,2} = 1$. 
Tables A-E and the associated ellipses are depicted in Figure~\ref{fig:tables}.

\begin{figure}[tbph!]
\centering
 \includegraphics[width=0.49\textwidth]{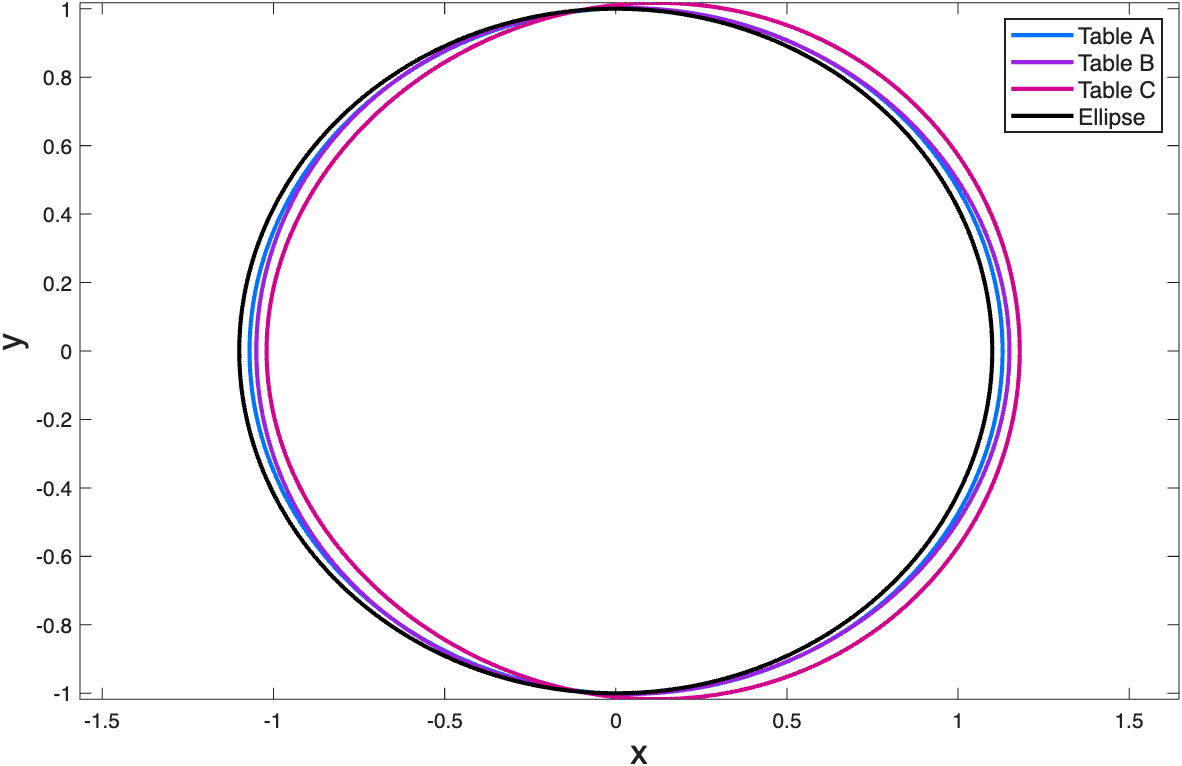}
 \includegraphics[width=0.49\textwidth]
 {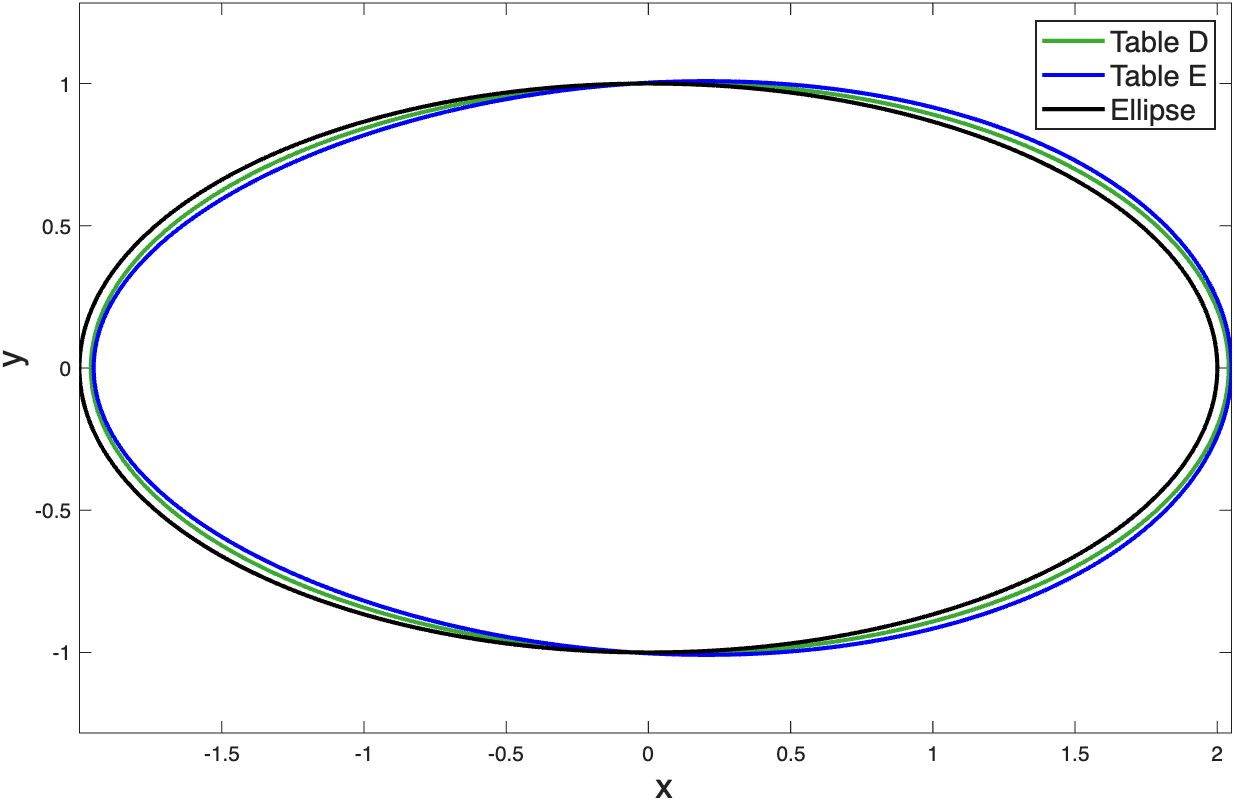}
   \caption{
 Shape of billiard tables used in the numerics. (Left) Billiard Tables A-C are perturbations of an ellipse with  eccentricity $0.4583$. 
(Right) Billiard Tables D and E are perturbations of 
an ellipse with eccentricity of $1.7321$. 
 \label{fig:tables}
}
\end{figure}
Since ellipses are convex and since $B(\theta)$ varies continuously with respect to the coefficients, the table remains convex when the higher-order Fourier coefficients are sufficiently small -- specifically, if for $i=1,2$ and for all $k>1$, $|a_{k,i}|$ and $|b_{k,i}|$ are significantly smaller than $|a_{1,1}|$ and $|b_{1,2}|$, the table remains convex. To illustrate how convexity is lost as these pertubations grow, consider the parameterized table with coefficients given by:
\begin{equation}\label{eq:convtable}
\begin{split}
(a_{1,1},a_{2,1}) = (1.1, 0.03 \epsilon)\\
(b_{1,2},b_{2,2}) = (1,0.025 \epsilon).
\end{split}
\end{equation}
For $\epsilon = 0$, this table is an ellipse. Figure~\ref{fig:convex} (left panel) shows how the table shape changes as $\epsilon$ increases, while the right panel shows the minimum value of the signed curvature measured along the boundary of the table. At $\epsilon \approx 9$, the minimum signed curvature passes through zero, and the table loses convexity. 
 The tables given by (\ref{eq:convtable}) have the same associated ellipse as Billiard Tables A, B, and C; in particular Billiard Tables A-C have coefficients $a_{2,1}$ and $b_{2,2}$ with $\epsilon < 4$, which is clearly before convexity is lost. 
\begin{figure}[tbph!]
\begin{center}
\includegraphics[width=.9\textwidth]{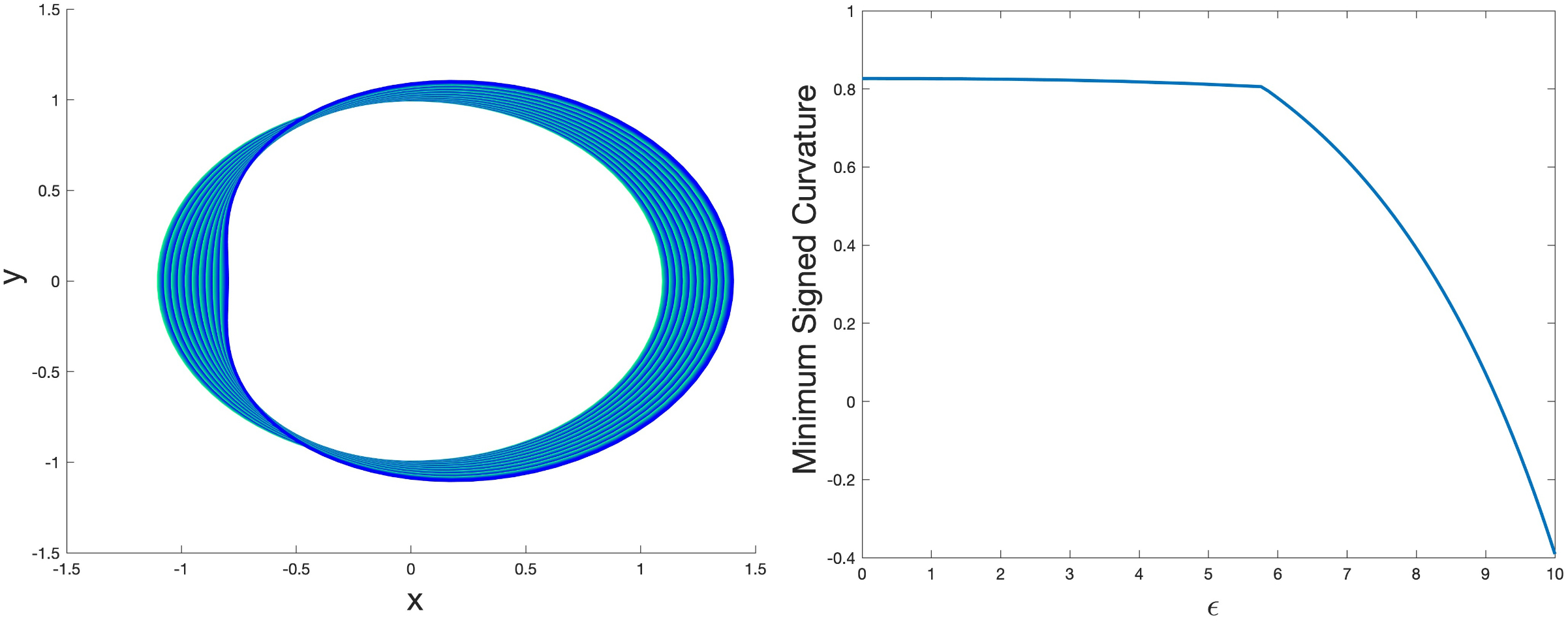}
\caption{(Left) The shape of a table  with the nonzero coefficients given by ~\eqref{eq:convtable} for values of $\epsilon$ between 0 and 10. (Right) The minimum signed curvature of the table. When $\epsilon \approx 9$, the signed curvature is zero at a point on the boundary, implying that the table has lost convexity.\\
\label{fig:convex}}
\end{center}
\end{figure}

\subsection{Iteration for elliptical tables}\label{sec:numellipse}

In order to compute the billiard map for general tables, we use the fact that we can directly compute the billiard map for an ellipse. We compute the location of each bounce on the table's corresponding ellipse, then use this as the initial guess for Newton's method to find the location of the bounce on the perturbed table.
In this section we explain how to compute the ellipse case. 

Let $B(\theta ) = (x(\theta),y(\theta)) = (a_1  \cos{(2 \pi \theta)}, b_1 \sin{(2 \pi \theta)}) $ parameterize an ellipse, as introduced in~\eqref{eq:ellipsetable}. Given an input $(\theta,r)$, we seek $(\hat{\theta},\hat{r})$ such that $f(\theta,r) = (\hat{\theta},\hat{r})$. 
Based on $r$, we can use the arccosine function to find the direction vector $\mathbf{v}$. 
Since $B(\theta)$ and $B( \hat{\theta})$ must
lie on the line parallel to the vector  $\mathbf{v} = (v_1,v_2)$, there exists $s_*$ such that 
\begin{equation}\label{eq:line}
B( \hat{\theta}) = B( \theta) + s_* \mathbf{v} .
\end{equation}
Let 
\[d_1 = v_1/a_1 \mbox{ and } d_2= v_2/b_1 . \]
Combining these equations with the equation for an ellipse and solving for $s_*$ gives 
\begin{equation}\label{eq:ellipsesstar}  s_* = -2 \left( \frac{d_1 \cos{(2\pi \theta)} + d_2 \sin{(2\pi \theta)}}{d_1^2 + d_2^2} \right) . \end{equation}
Plugging this value back into~\eqref{eq:line} to get $B(\hat{\theta}) = (\hat{x},\hat{y})$. 
We find $\hat{\theta}$ from $B(\hat{\theta})$ since 
\[ \frac{b_1}{a_1} \tan(2 \pi \hat{\theta}) = \frac{\hat{y}}{\hat{x}}. 
\]
This last statement is where the simplification of working with the ellipse is really clear, since for a perturbed ellipse, we cannot solve directly and must instead use a two-dimensional root finding method. 
Finally, from this point finding $\hat{r}$ is exactly the same as for the perturbed case, and it will be explained in the next section. 
\subsection{Iteration for general billiard tables}\label{sec:realmap}
\begin{figure}[tbph!]
 \begin{center}
\includegraphics[width = .8 \textwidth]{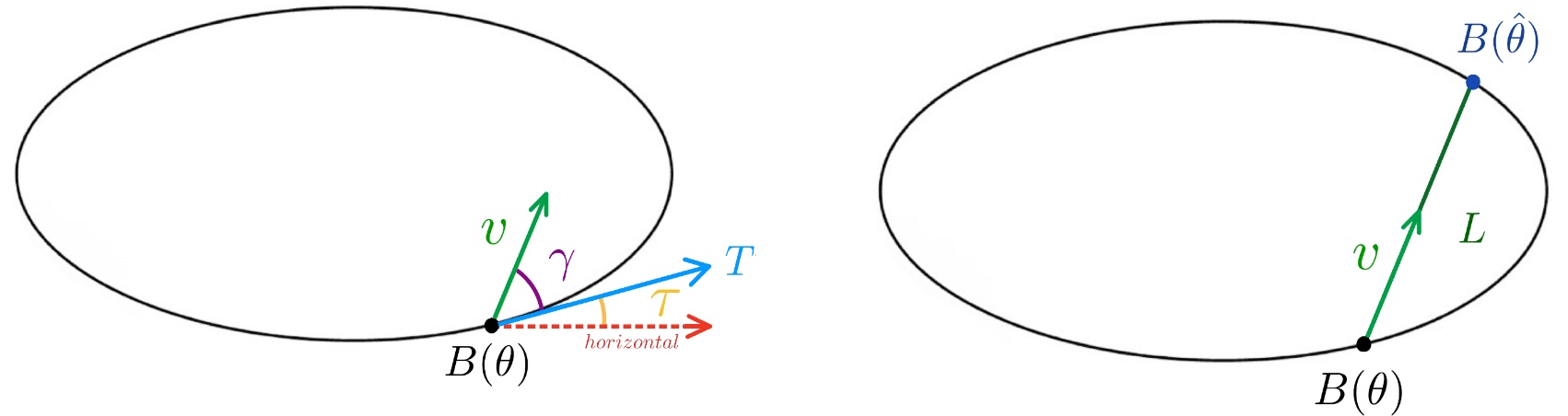}
\caption{ Notation for our numerical methods. Left: $\mathbf{v}, \gamma, T,$ and $\tau$. Right: The line $L$ contains $B(\theta)$ and $B(\hat{\theta})$. 
\label{fig:notation}}
\end{center}
\end{figure}
\begin{figure}[tbph!]
\begin{center}
\includegraphics[width=0.45\textwidth]{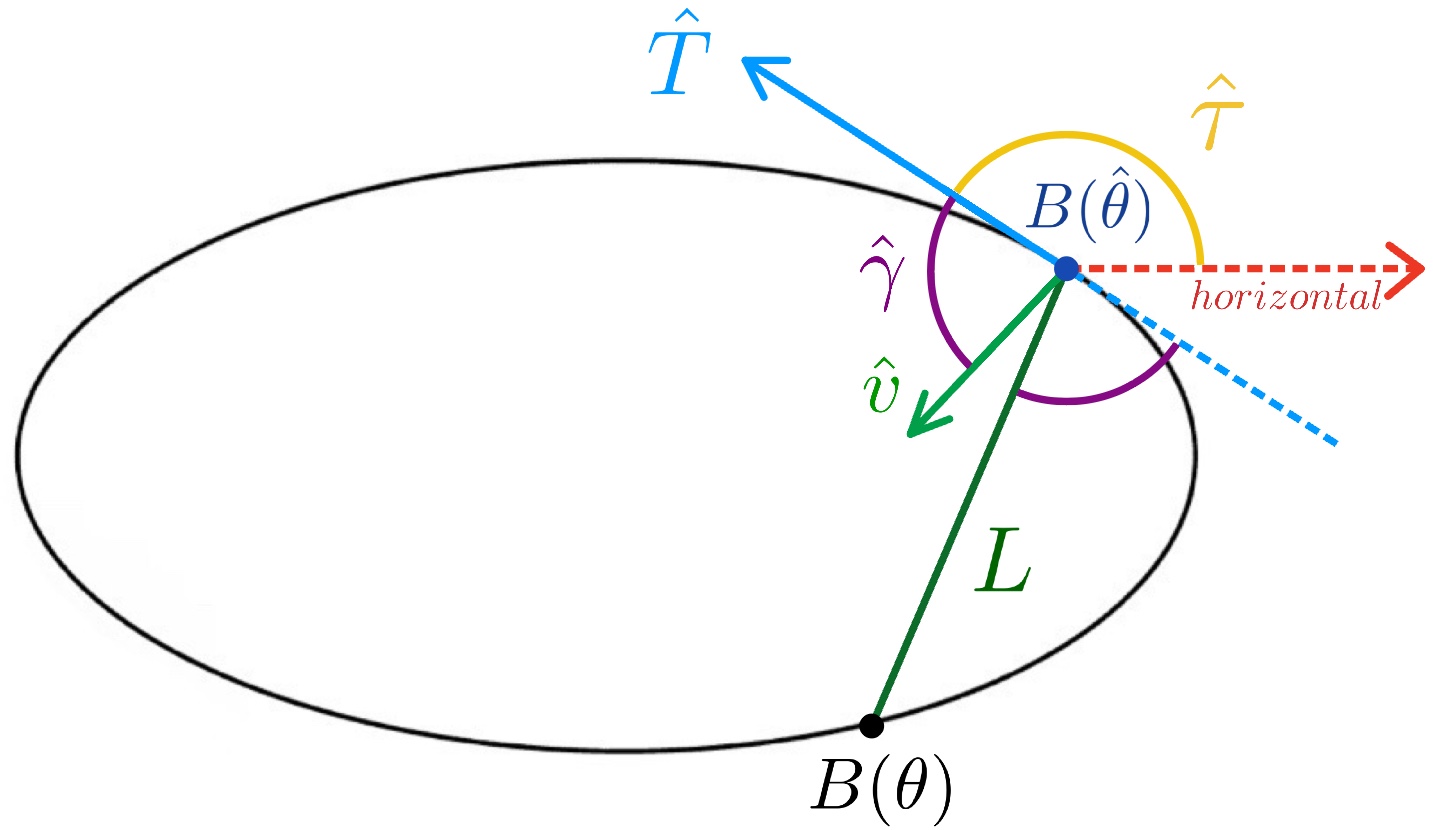}
\caption{Computation of the $\hat{r}$ from $\mathbf{v}$ and $\hat{\theta}$. \\
 \label{fig:reflectionangle}}
\end{center}
\end{figure}
%
%
Given  $\theta$ and $r$, we use the following notation, depicted in Fig.~\ref{fig:notation} (left). 
\begin{enumerate}
\item $T = B'(\theta)$ the tangent vector to the boundary curve.
\bigskip 

\item $\gamma$ is defined implicitly by $r = \cos{\gamma}$. We therefore define it directly by $\gamma = \arccos{r}$. 
\bigskip

\item $\tau$ is the angle between the positive horizontal direction and $T$. Thus $\tau$ is defined implicitly by $T_2/T_1 = \tan \tau$.
To find $\tau$, we use the quadrant-specific version of the arctan function ({\tt atan2} in MATLAB) and set $\tau = {\tt atan2}(T)$.
\bigskip

\item $\mathbf{v}$ is defined implicitly by saying that $\gamma$ is the angle between $\mathbf{v}$ and $T$.  From the previous two definitions, we get 
\[\mathbf{v} = \begin{pmatrix}
    \cos{( \gamma + \tau)}\\ \sin{( \gamma + \tau)}
\end{pmatrix}.\]
\end{enumerate}

Using the notation defined above, we are ready to find $\hat{\theta}$ using the fact that $B(\theta)$ and $B(\hat{\theta})$ lie on a line parallel to $\mathbf{v}$, given by 
\begin{equation}
 L(s) = B(\theta) + s \mathbf{v } ,
\end{equation}
where $s \in \R$ is a scalar. See right panel of Figure~\ref{fig:notation}. Since the table is convex, this line has exactly two intersection points with the boundary, the first being the original point  $B(\theta)$ at $s=0$. The second point $B( \hat{\theta})$ occurs at $s>0$. As long as we have a good guess near the desired solution, this intersection can be formulated as a zero finding problem
\begin{equation}\label{intersectbill}
\mathbf{0} = h(s,\hat{\theta}) = L(s) - B(\hat{\theta}),
\end{equation}
where the function $h:\R^2 \to \R^2$. Since $L$ and $B$ are both smooth, we solve using Newton's method. 

We now explain how to find a good initial guess  $(s_{\rm guess},\hat{\theta}_{\rm guess})$ for our Newton's method to find a root for $h$. 
We saw in the last section that if $f_e$ is the billiard map for an ellipse, then the iterate $f_e(\theta_e,r_e) = (\hat{\theta}_e,\hat{r}_e)$ can be computed in an explicit way, and this calculation also yields $s_*$. Therefore, for the map $f_e$ ellipse associated with our table, we compute $f_e(\theta,r)=(\hat{\theta}_e,\hat{r}_e)$. We use 
$s_{\rm guess} = s_*$, the $s$ value for the ellipse given in~\eqref{eq:ellipsesstar}  and $\theta_{\rm guess} = \hat{\theta}_e$ for our guess for Newton's method. This is  usually a good enough guess to allow us to find  $(\hat{\theta},\hat{r})$ for the perturbed table. 

Though the method above works the vast majority of the time, it is not enough to guarantee convergence to the correct root in some small number of cases. We make two modifications. The first modification takes into account that 
 when the ball only bounces a short distance, or more specifically $|r|$ is close to 1, then  Newton's method is likely to find the spurious root that occurs at $s=0$. In order to avoid this problem, and make sure that we converge to the correct root in these cases, we use the method of \textit{deflation} on the function $h$ to eliminate the incorrect zero to which we do not want Newton's method to converge.  That is, since we know that $h$ has a root at $(s,\hat{\theta}) = (0,\theta)$, we modify $h$ as follows
\[
\tilde{h}(s,\hat{\theta}) = \frac{h(s,\hat{\theta})}{\|(s-0,\hat{\theta}-\theta)\|}.
\]
This has the same second root as $h$ but avoids the spurious solution. 
For a more complete discussion on deflation see \cite{Brow_Gearhart_1971a, PETERS_WILKINSON_1971, Beentjes_2015, Farrell_Birkisson_Funke_2015a}. 

The second modification is a technical modification in making sure that the initial guess is well defined. It corrects for the fact that $\mathbf{v}$ for the perturbed table might point outside the associated ellipse table. See the discussion and Figure~\ref{fig:modbounce} in  Appendix A.

After we have found $\hat{\theta}$, the last step is to find $\hat{r}$. 
 Using similar notation as before, we use the notation below.     See Fig.~\ref{fig:reflectionangle}.
\begin{enumerate}
    \item $\hat{T} = B'(\hat{\theta})$ is the tangent vector.
    \item $\hat{\tau}$ is the angle between the tangent line and the positive horizontal axis. 
    \item $\hat{\rho}$ is the angle between  $\mathbf{v}$ and the positive horizontal axis. 
    \item $\hat{\gamma} = \hat{\rho}-\hat{\tau}$. This is the angle between $\mathbf{v}$ and $\hat{T}$, which is up to sign the same as the angle between  $\hat{T}$ and the reflected vector $\hat{\mathbf{v}}$ . (We never need to compute $\hat{\mathbf{v}}$.)
    \item $\hat{r} =\cos{\hat{\gamma}}$ . 
\end{enumerate}

We have found this method to work without failure for small perturbations of ellipses. In particular, since we initialize Newton's method with $(s_{\rm guess},\hat{\theta}_{\rm guess})= (s_*, \hat{\theta}_e)$, the boundary of a too greatly perturbed table may be insufficiently close to the guess to get convergence. In order to find an upper bound on the perturbation which results in Newton's method converging within a reasonable number of iterations, we studied the table with coefficients given in ~\eqref{eq:convtable}.
This table was previously found to have lost convexity around $\epsilon \approx 9$ (see Figure ~\ref{fig:convex}), but Newton's method was found to converge reliably only for $\epsilon \in [0,5]$. Convergence of Newton's was studied in depth for $\epsilon \in [5,6.5]$. 
We calculated orbits for an initial location of $(\theta_0, r_0)$ with $\theta_0 = 0.3$ and 100 values of $r_0$ ranging from $-0.9$ to $0.9$. These initial conditions were tested on the table given by ~\eqref{eq:convtable} for each value of $\epsilon$, where 23 values of $\epsilon$ were chosen, ranging from $\epsilon=5$ to $\epsilon=6.5$. We call Newton's non-convergent if for any bounce in the first 100 iterations of the orbit, Newton's method did not converge. Any $\epsilon$ which resulted in a non-convergent Newton's for any initial condition was said to be too large of a perturbation. We found that for $\epsilon \in [5.0,5.778]$, the table was sufficiently close to elliptical for our method to converge, while for $\epsilon \geq 5.8434$, the table was non-convergent often enough to make the method impractical.


\subsection{Continuation -- analytic billiard maps}\label{sec:analytic}

We extend the billiard map analytically to a complex domain 
via the following steps.
\begin{itemize}
\item[(1)] Implicitly define $\mathbf{v}$ as a function of $(\theta,r)$: We rewrite our 
statements about $\tau, \gamma, \theta, T,  r,$ and $\mathbf{v}$ implicitly as a root finding problem. 
\[
\begin{array}{c}
g(\tau,\gamma,\theta,r)= 
\left( \begin{array}{cc}
\cos \gamma  - r  \\[2ex]
T_1 \sin \tau - T_2 \cos \tau
\end{array}
\right)  = \left(\begin{array}{cc}0\\0\end{array}\right),
\mathbf{v} = \left(\begin{array}{c} \cos(\gamma + \tau)\\ \sin(\gamma + \tau) \end{array}\right)
\end{array}
\]
Note that $g$ is perfectly well defined for complex values 
of $\tau, \gamma, \theta$, and $r$.  Indeed, $g$ is complex analytic in these
variables.

By implicit differentiation
\[ \frac{\partial \mathbf{v}}{\partial (\theta,r)}  =  \frac{\partial \mathbf{v}}{\partial(\gamma,\tau)}  \frac{\partial (\gamma, \tau)}{\partial (\theta,r)}\]
so that 
\[ \frac{\partial (\gamma, \tau)}{\partial (\theta,r)}  =- \left( \frac{\partial g} {\partial (\gamma,\tau )} \right) ^{-1} \frac{\partial g }{\partial (\theta,r)}\]
\[
\frac{\partial \mathbf{v}}{\partial \gamma} = \frac{\partial \mathbf{v}}{\partial \tau} 
= \left( \begin{array}{r}
    -\sin(\gamma + \tau)\\ \cos(\gamma + \tau)
\end{array} \right)
\]
\begin{eqnarray*}
\frac{\partial g}{\partial (\theta,r)}= 
\left( \begin{array}{cc}
-1 \hspace{0.1in} & 0  \\[2ex]
0  & \displaystyle \frac{\partial{T_1}}{\partial \theta} \sin \tau - \frac{\partial{T_2}}{\partial{\theta}} \cos \tau 
\end{array}
\right)    \\[2ex]  
\frac{\partial g }{\partial (\gamma,\tau)}= 
\left( \begin{array}{cc}
-\sin \gamma & 0  \\[2ex]
0  & T_1  \cos \tau + T_2 \sin\tau 
\end{array}
\right)  
\end{eqnarray*}
The term $\partial g/\partial{(\gamma,\tau)}$, 
is non-singular under the following condition, which holds for any 
well defined billiard iterate. 
\begin{itemize}
\item[(a)]  $\sin \gamma \ne 0$ meaning that $\gamma \ne 0, \pi$. These values correspond to the cases where the direction of 
bounce is tangent to the boundary -- which is not a possibility for any convex billiard table. 
\item[(b)] $T_1 \cos \tau + T_2 \sin \tau \ne 0$.  If this was equal to zero and $g = \mathbf{0}$, then we can show that $T_1 = T_2 =0$. 
\end{itemize}

\item[(2)] Implicitly define $(s,\hat{\theta)}$ as a function of $(\theta,r)$: We quickly restate the function $h$ so that we can write down  the derivative. 
\[h(\theta,r,s,\hat{\theta}) =  B(\theta) + s \mathbf{v} - B(\hat{\theta}) = \mathbf{0}\]
To compute the derivative, define $\hat{T} = B'(\hat{\theta})$ and  $T = B'(\theta)$. 
Using implicit differentiation, we get 
\[
\frac{\partial (s,\hat{\theta})}{\partial (\theta,r)} = 
- \left( \begin{array}{ccc}
\mathbf{v} &| & -\hat{T}
\end{array}
\right)^{-1}
\left( \begin{array}{ccc}
\displaystyle s \frac{\partial \mathbf{v}}{\partial r} & | &  \displaystyle T + s \frac{\partial \mathbf{v}}{\partial \theta}
\end{array}\right)
\]
For the part in the inverse to be singular, we would need $\hat{T}_2/\hat{T}_1 = v_2/v_1$. This only would occur if $\mathbf{v}$ and $\hat{T}$ were parallel, which cannot happen for a convex billiard table. 

\item[(3)] Implicitly define $\hat{r}$ as a function of $(\theta,r)$:   
This is formulated implicitly as follows. 
\[
k(\theta,r,\hat{\rho},\hat{\tau}) =   
\left( \begin{array}{cc}
\hat{T}_1 \sin\hat{\tau} - \hat{T}_2 \cos \hat{\tau} \\[2ex]
v_1 \sin \hat{\rho} - v_2 \cos \hat{\rho}
\end{array}
\right)  = \left(\begin{array}{cc}0\\0\end{array}\right), \mbox{ where }
\hat{r} = \cos(\hat{\rho}-\hat{\tau}).\]
Note that this is written in terms of $\mathbf{v}$ and not $\hat{\mathbf{v}}$. 
Using implicit differentiation 
\[
\frac{\partial \hat{r}}{\partial (\theta,r)} = \frac{\partial \hat{r}}{\partial (\hat{\rho},\hat{\tau})} \frac{\partial (\hat{\rho},\hat{\tau})}{\partial (\theta,r)}
\]
and 
\[
\frac{\partial (\hat{\rho},\hat{\tau})}{\partial (\theta,r)} = 
- \left(\frac{\partial k}{\partial (\hat{\rho},\hat{\tau})} \right)^{-1} 
\frac{\partial k}{\partial (\theta,r)}. 
\]
These partial derivatives are given by  
\[
\frac{\partial k}{\partial (\hat{\rho},\hat{\tau})} = \left( \begin{array}{cc} 
0  & \hat{T}_1 \cos \hat{\tau} +  \hat{T}_2 \sin\hat{\tau} \\[2ex]
v_1 \cos \hat{\rho} + v_2 \sin \hat{\rho}  & 0 
\end{array}
\right),
\]
and 
\[
\frac{\partial k}{\partial (\theta,r)} = \left( \begin{array}{cc} 
\displaystyle
\frac{\partial \hat{T}_1}{\partial r} \sin \hat{\tau} - \frac{\partial \hat{T}_2}{\partial r} \cos \hat{\tau}  \hspace{0.1in}& 
\displaystyle \frac{\partial \hat{T}_1}{\partial \theta} \sin \hat{\tau} - \frac{\partial \hat{T}_2}{\partial \theta} \cos \hat{\tau} \\[2ex]
\displaystyle \frac{\partial v_1}{\partial r} \sin \hat{\rho} - \frac{\partial v_2}{\partial r} \cos \hat{\rho}  \hspace{0.1in} & 
\displaystyle \frac{\partial v_1}{\partial \theta} \sin \hat{\rho} - \frac{\partial v_2}{\partial \theta} \cos \hat{\rho} 
\end{array}
\right), \mbox{ where } r = \cos{\gamma}.
\]
\item[(4)] It only  remains to show that $\partial k/\partial(\hat{\rho},\hat{\tau})$ is non-singular when $k = 0$. 
If the determinant of the Jacobian matrix is zero, then we can show that either $\mathbf{v}$ or $\hat{T}$ is the zero vector. By our assumptions on $\mathbf{v}$ and $B$, that cannot happen.
\end{itemize}

Unlike in the real-valued case, it is not feasible to use an ellipse as an initial guess. In fact, we can continue the ellipse into the complex plane along different paths to get different answers. Therefore to find the iterate of $(\theta,r)$ we require an initial guess $(\hat{\theta}_{\rm guess},\hat{r}_{\rm guess})$, as well as guesses approximating the complex-valued angles $\tau, \gamma, \hat{\tau}, \hat{\gamma}$. These guesses come from the exact iterate values for a point $(\theta_p,r_p)$ which is close to $(\theta,r)$,
so the analytic continuation is performed using numerical continuation.


Note that the continuation version of the map can be continued to the case of $\hat{\theta}$  outside of $[0,1)$, whereas the way that the real version was designed means that it will never be possible to find an iterate outside this range.

\section{Parameterization Method for Hyperbolic Periodic Orbits of Planar Maps} \label{sec:planarMaps}
In this section, after some initial setup, we present numerical methods for identifying dynamical structures for billiards maps. 
\subsection{Multiple shooting}
\label{sec:multishoot} 


In order to study periodic orbits, we set up a multiple shooting map.  
We consider only the special case of planar maps, 
as this is the case used in the present work, and this restriction simplifies parts of the discussion. 
The material in this section and in Section~\ref{sec:stability} is standard, 
but it is essential for the remainder of the present work.
We restate a number of well known results without proof in 
this context.  We refer to
\cite{MR2656693,MR3706909,MR4667735}
for more complete discussion, including many additional references.  

The following procedure  referred to as {\em multiple shooting} for 
a periodic orbit of $f$.  It converts a periodic orbit into a fixed point of the higher-dimensional map $F$ in~\eqref{eq:multShoot}. 
Let $S \subset \mathbb{R}^2$ be an open and 
connected set and suppose that 
$f: S \to \mathbb{R}^2$ is a smooth map. 
For $k \in \mathbb{N}$, let $f^k$ denote the composition 
of $f$ with itself $k$ times, and let $f^0 = \mbox{Id}_{\mathbb{R}^2}$ 
denote the identity map.
We say that $\hat u \in S$ is a period-$K$ point for $f$
if 
\[
f^K(\hat u) = \hat u.
\]
We say that $\hat u$ has least period $K$ 
if $f^k(\hat u) \neq \hat u$ for $0 < k < K$.
Letting $u_1 = \hat u$, 
and $u_j = f(u_{j-1})$ for $1 < j \leq K$,  
we say that the ordered collection of points 
$\{ u_1, \ldots, u_K \}$, each in  $S$, is a periodic orbit for $f$, noting that $f(u_K) = u_1$.

Fix $K$, let $S^K \subset \mathbb{R}^{2K}$ denote the $k$-fold
cartesian product of $S$. Define the multiple shooting map 
$F_K: S^K \to \mathbb{R}^{2K}$ by 
\begin{equation} \label{eq:multShoot}
F_K(u_1, u_2, \ldots, u_{K-1}, u_K) = 
(f(u_K),f(u_1),f(u_2),\dots,f(u_{K-1})).
\end{equation}
Note that $\bfu = (u_1, \ldots, u_K) \in S^K$ 
satisfies
$F_K(\bfu) = \bfu$, if and only if $\{ u_1, \ldots, u_K \}$ is a period-$K$ orbit 
of $f$. That is, fixed points of $F_K$ correspond to period-$K$ orbits of $f$.
Moreover, $K$ is the least period 
if and only if $u_1 \neq \ldots \neq u_K$.  We will 
suppress the subscript and write $F_K = F$ when 
$K$ is clear from context.
The Jacobian matrix of $F$ is the following $2K \times 2K$ matrix
\begin{equation}
\DF(\bfu) = \begin{pmatrix}
0_{2 \times 2} & 0_{2 \times 2} & \cdots & 0_{2 \times 2} & \Df(u_K) \\
\Df(u_1) & 0_{2 \times 2} & \cdots & 0_{2 \times 2} & 0_{2 \times 2} \\
\vdots & \vdots & \ddots & \vdots & \vdots \\
0_{2 \times 2} & 0_{2 \times 2} & \cdots & \Df(u_{K-1}) & 0_{2 \times 2}
\end{pmatrix},\nonumber
\end{equation}
where $0_{2 \times 2}$ denotes the $2 \times 2$ zero matrix and $\Df(u_i)$ is the $2 \times 2$ Jacobian of $f$ evaluated at $u_i$.
The virtue of the multiple-shooting framework is that it dispenses with compositions.

Fixed points of $F$ are equivalent to zeros of the map 
$G: S^K \to \R^{2K}$ defined by 
\begin{equation}
    G(\bfu) = F(\bfu) - \bfu,
    \nonumber
\end{equation}
and zeros of $G$ can be computed using 
Newton's method.  That is,
if $\bfu^0 \in S^K$ has 
$\|G(\bfu^0)\|$ small enough, then the Newton
sequence $\left\{\bfu^k\right\}$
defined by 
\begin{equation}
    \bfu^{k+1} = \bfu^{k} + {\bf \Delta}^{k}, 
    \nonumber
\end{equation}
where ${\bf \Delta}^{k} \in S^K$ solves the linear equation 
\begin{equation}
DG(\bfu^{k}) {\bf \Delta}^{k} = - G(\bfu^{k}), \nonumber
\end{equation}
converges to $\bfu_*$ with $G(\bfu_*) = 0$.
The components of $\bfu_*$ provide a periodic-$K$ orbit for $f$.
Here we have the explicit formula 
\begin{equation}
    DG(\bfu) = DF(\bfu) - \Id_{2K \times 2K},
    \nonumber
\end{equation}
where $\Id_{2K \times 2K}$ denotes the $2K \times 2K$ identity matrix.

\subsection{Stability for periodic orbits}\label{sec:stability}
We state conditions for the stability of periodic orbits. As with the previous section, this material is standard but essential and thus is restated here. 
The stability of a periodic orbit is defined by 
applying fixed point stability
(Hartman-Grobman theorem/stable manifold theorem)
to the composition map $f^K$.
Moreover, the eigenvalues of the 
matrices $Df^K(u_j)$ do not depend on $1 \leq j \leq K$.  That is, each of 
these matrices has the same eigenvalues 
$\lambda_1, \lambda_2 \in \mathbb{C}$, and we refer to these
as the multipliers of the periodic orbit.
Note, however, that the associated eigenvectors are 
in general different for different $j$.  
The multipliers $\lambda_1,\lambda_2$ can be recovered from 
the Jacobian of the multiple shooting map.  
Indeed, using Proposition 1 of \cite{MR2656693}, Lemma 3.1 of 
\cite{MR3706909} or Proposition 3.1 of \cite{MR4667735}, 
we get the following lemma.

\begin{lemma}[Multipliers via the multiple shooting eigenvalues]
\label{lemmq:floquet}
Let $\bfu_* = (u_1, \ldots, u_K)$ be a fixed point of the multiple shooting 
operator $F$ associated with the map $f$, so that each $u_j$ is a period-$K$ under $f$.
\begin{itemize}
\item The eigenvalues of $DF(\bfu_*)$ are  
the $K$-th roots of the multipliers of the periodic orbit.
More precisely, if
$(\alpha, \xi)$ is an eigenvalue-eigenvector pair for $DF(\bfu_*)$
with $\xi = (\xi_1, \ldots, \xi_K)$,
then $(\alpha^m, \xi_j)$ is an eigenpair for 
$Df^K(u_j)$, $1 \leq j \leq K$. 
\item If $\{u_1, \ldots, u_K\}$ is a period-$K$ saddle orbit, 
then the multipliers $|\lambda_1| < 1 < |\lambda_2|$ are real, since 
the matrices $Df^K(u_j)$ are real, and each 
has a real eigenbasis.  
\item If $(\alpha, \xi)$ is an eigenpair with $\alpha$ and 
$\xi$ complex then, since $DF(\bfu_*)$ is a real matrix, we have that 
$(\alpha, \mbox{Real}(\xi))$ and $(\alpha, \mbox{imag}(\xi))$
are both eigenpairs for $DF(\bfu_*)$. 
\end{itemize}
\end{lemma}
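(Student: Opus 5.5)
The plan is to work directly with the block-cyclic structure of $\DF(\bfu_*)$ and reduce every claim to a statement about the product $Df^K(u_j)=\Df(u_{j-1})\cdots\Df(u_1)\Df(u_K)\cdots\Df(u_j)$, with indices read cyclically. Throughout I use that $f$ is a diffeomorphism, so each $\Df(u_j)$ is invertible. The block-cyclic form then gives $\det \DF(\bfu_*)=\pm\prod_j\det\Df(u_j)\neq 0$, and hence every eigenvalue $\alpha$ of $\DF(\bfu_*)$ is nonzero.

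\emph{First item.} Write the eigen-equation $\DF(\bfu_*)\xi=\alpha\xi$ blockwise:
\[
\Df(u_K)\xi_K=\alpha\xi_1,\qquad \Df(u_j)\xi_j=\alpha\xi_{j+1}\quad (1\le j\le K-1).
\]
\begin{itemize}
\item No block vanishes. If some $\xi_j=0$, then invertibility of $\Df(u_j)$ and $\alpha\neq 0$ force $\xi_{j+1}=0$, and cyclically every block vanishes, contradicting $\xi\neq 0$.
\item Eigenvector property. Chain the $K$ block relations starting from $\xi_j$ and going once around the cycle. This gives $Df^K(u_j)\xi_j=\alpha^K\xi_j$, so $(\alpha^K,\xi_j)$ is an eigenpair of $Df^K(u_j)$. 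The exponent ``$m$'' in the statement should be read as $K$.
\item Converse. Let $(\lambda,\eta)$ be an eigenpair of $Df^K(u_1)$ and let $\alpha$ be any $K$-th root of $\lambda$. Set $\xi_1=\eta$ and $\xi_{j+1}=\alpha^{-1}\Df(u_j)\xi_j$. The one remaining equation $\Df(u_K)\xi_K=\alpha\xi_1$ is exactly $Df^K(u_1)\eta=\alpha^K\eta$, so $\xi$ is an eigenvector of $\DF(\bfu_*)$ with eigenvalue $\alpha$.
\item Multiplicities. Compute the characteristic polynomial by eliminating blocks in the system $(\DF-\alpha\Id)\xi=0$, or equivalently by a Schur-complement determinant for block-cyclic matrices. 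One expects $\det(\alpha\Id_{2K}-\DF(\bfu_*))=\det(\alpha^K\Id_2-Df^K(u_1))$. This says the $2K$ eigenvalues of $\DF(\bfu_*)$ are precisely all $K$-th roots of $\lambda_1$ and of $\lambda_2$.
\end{itemize}
This determinant identity is the only step needing care. I would verify it by forward substitution: if $\xi_1,\dots,\xi_{K-1}$ are expressed in terms of $\xi_K$, the system collapses to the single $2\times 2$ equation $(\alpha^K\Id_2-Df^K(u_K))\xi_K=0$, up to the nonzero factor $\alpha^{-(K-1)}$.

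\emph{Second item.} The multipliers are the eigenvalues of a real $2\times2$ matrix. They are therefore either both real or a complex-conjugate pair, and a conjugate pair has equal moduli. The saddle condition $|\lambda_1|<1<|\lambda_2|$ rules out equal moduli, so both multipliers are real and distinct, and each $Df^K(u_j)$ has a real eigenbasis.

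\emph{Third item.} Since $\DF(\bfu_*)$ is real, conjugating $\DF\xi=\alpha\xi$ gives $\DF\bar\xi=\bar\alpha\bar\xi$. When $\alpha$ is real, adding and subtracting these two equations shows that $\mathrm{Re}(\xi)$ and $\mathrm{Im}(\xi)$ each satisfy the eigen-equation, and whichever of them is nonzero is a real eigenvector. This is the reading I would adopt, since it is the case relevant to real saddle multipliers. If $\alpha$ is genuinely non-real, the same computation instead shows that $\mathrm{Re}(\xi)$ and $\mathrm{Im}(\xi)$ span a real invariant $2$-plane on which $\DF(\bfu_*)$ acts by rotation-dilation, and I would note this as the correct form of the claim in that case.
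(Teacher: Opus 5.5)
The paper gives no proof of this lemma. It imports it from the multiple-shooting literature (Proposition~1 of \cite{MR2656693}, Lemma~3.1 of \cite{MR3706909}, Proposition~3.1 of \cite{MR4667735}). Your self-contained block-cyclic argument is correct and supplies what the citation omits. Chaining $Df(u_K)\xi_K=\alpha\xi_1$ and $Df(u_j)\xi_j=\alpha\xi_{j+1}$ once around the cycle gives one inclusion. The converse construction, from an eigenvector of $Df^K(u_1)$ and a chosen $K$-th root, gives the other. Your treatment of the second and third bullets is also right.

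The advantage of the direct route is that it exposes what the printed statement silently assumes or gets wrong:
\begin{itemize}
\item The exponent $m$ must be $K$.
\item The blocks $\xi_j$ are nonzero only because $\alpha\neq 0$. This needs $DF(\bfu_*)$ invertible, which holds here since the billiard map is a diffeomorphism, but not for the merely smooth $f$ of Section~\ref{sec:multishoot}.
\item The third bullet holds only for real $\alpha$. For non-real $\alpha$, the real and imaginary parts span an invariant real $2$-plane instead.
\end{itemize}

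One small looseness concerns your multiplicity claim. Forward substitution in the system $(\alpha\Id-DF(\bfu_*))\xi=0$ matches eigenspaces, not characteristic polynomials. To get $\det(\alpha\Id_{2K}-DF(\bfu_*))=\det(\alpha^K\Id_2-Df^K(u_K))$, take the Schur complement of the matrix itself:
\begin{itemize}
\item The leading lower block-bidiagonal $(K-1)\times(K-1)$ block has determinant $\alpha^{2(K-1)}$.
\item This cancels the factor $\alpha^{-2(K-1)}$ in $\det(\alpha\Id_2-\alpha^{-(K-1)}Df^K(u_K))$.
\item The identity then extends to $\alpha=0$ as a polynomial identity.
\end{itemize}
The lemma as stated does not need this. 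In the saddle case the $2K$ roots are distinct, so your eigenspace count already forces simple eigenvalues. The determinant identity is, however, exactly the multiplicity statement the paper later uses in Section~\ref{sec:parmPlanarMaps}, where it splits the $2K$ eigenvalues of $DF(P(0))$ into the $K$-th roots of the two multipliers.
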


In the present work, the map $f$ is orientation preserving,
and the orbits we consider have two positive real multipliers
(we did not find orbits with two negative multipliers, but 
we have not ruled them out).
When there are two positive real multipliers,
there is always a $K$-th root which is real, and 
this is the root we will always choose to work with.
This leads to power series solutions with real coefficients, 
making it especially easy to recover the real image of the 
parameterization.
Note however that, more generally,
if $Df^K(u_j)$ has a negative multiplier, 
and if $K$ is even, then there will be no real $K$-th root.
In this case, recovering the real image of the perameterization is 
more delicate.
This, and more general cases involving higher dimensional manifolds
and maps, are discussed in detail in \cite{MR3706909}.

\subsection{Identifying orbit types -- frequency and chaos}\label{sec:wba}

If $f$ is an area preserving map of the plane, then typical orbits of 
$f$ are either periodic, quasiperiodic, or 
chaotic, and the method of weighted Birkhoff averages can be used to distinguish
between these, as well as computing the frequency of quasiperiodic orbits. We briefly discuss this method, 
and refer to ~\cite{quanquasi_2017,das_yorke_2018, das_solving_2019,  sander_birkhoff_2020, Meiss_Sander_2021} for more 
detailed discussion.

Consider an orbit of $f$ $\left\{(\theta_k,r_k)\right\}_{k = 0}^\infty$, where $(\theta_{k+1},r_{k+1}) = f(\theta_k,r_k)$. 
The frequency (or rotation number) of the orbit is defined to be the average amount that the 
$\theta$ variable changes in a single iterate.
More precisely, define 
\[
{\rm frequency} =  \lim_{N \to \infty} \frac{1}{N} \sum_{k=0}^{N-1} (\theta_{k+1} - \theta_k).
\]
Under standard smoothness and ergodicity hypotheses, the limit of this average
is an invariant of the orbit and does not
depend on the initial condition/initial position along the orbit.
In practice however this sum converges very slowly, with complexity $O(1/N)$.

The weighted Birkhoff average is a tool which
accelerates the convergence, but only along nonchaotic orbits
with Diaophantine frequency.
This results in spectral convergence, faster than $O(1/N^k)$ 
for all $k \in \N$. The weighted Birkhoff average still converges 
slowly for chaotic orbits, and because of this it 
can be used as a sieve to distinguish chaotic from quasiperiodic orbits.

In order to distinguish chaotic orbits, given a length-$N$ orbit segment, we compute a weighted Birkhoff average on iterates $0,\dots,N/2$, and compare the answer 
to the same weighted Birkhoff average for iterates $N/2,\dots,N$ 
(in practice it suffices to take $N = 1000$). 
Since a Birkhoff average depends only on the orbit, if the difference in the two answers is small, this an indication that the average is converging rapidly, and we can
classify the orbit as nonchaotic; if the difference is large, the orbit 
is classified as chaotic. 
Applying this method to a large number of randomly selected points allows us to identify regions of chaos for billiard maps. These regions are colored gray in all phase plane figures.  

Additionally, for nonchaotic orbits, the weighted-Birkhoff average allows us to give a highly accurate computation of the orbit frequency. This method was used to determine the color variation for all phase plane figures. 

\subsection{Finding periodic orbits} \label{rem:shortPeriod}

In searching for periodic orbits,  our goal is to find ones that possess transverse homoclinic orbits. Therefore,  we only seek
periodic orbits within regions identified as chaotic. 

Initial guesses for the multiple shooting Newton method
for periodic orbits can be guessed by looking 
near elliptical islands.   Indeed, since  periodic orbits
typically appear in hyperbolic and elliptic pairs, and since the elliptic orbits are easy to see from the coloring of the  phase space plot, we can often find initial data for hyperbolic 
periodic orbits by looking ``between''
the elliptic islands.  Using this geometric intuition, it is possible for orbits of small period, such as say any period up to 5, to simply read good enough initial guesses directly off
a phase plane plot colored by frequencies location of chaotic orbits, and to have these guesses converge under Newton's method without further effort. Finding periodic orbits of larger periods is a bit more ad hoc and involves several iterations of the same process, including zooming in on the colored phase plane figure to see more detail of the elliptical islands. With a bit of work we were able to find periodic orbits of periods up to 30. (We did not look for  orbits beyond period 30.)

\subsection{Parameterization method for one-dimensional spectral submanifolds}\label{sec:parameterization}

 Suppose that $p_0 \in \mathbb{R}^d$ is a hyperbolic fixed point
for a real analytic map $F$.  In this case there exist $d_u, d_s \in \mathbb{N}$
with $d_u + d_s = d$ such that $DF(p_0)$ has $d_s$ stable eigevalues 
$\lambda^s_1, \ldots, \lambda_{d_s}^s \in \mathbb{C}$, and $d_u$ unstable eigenvalues 
$\lambda_1^u, \ldots, \lambda_{d_u}^u \in \mathbb{C}$. 
 Assume that $DF(p_0)$ is diagonalizable, so that there are 
$\xi_1^s, \ldots, \xi_{d_s}^s \in \mathbb{R}^d$ associated stable,
and $\xi_1^u, \ldots, \xi_{d_u}^u \in \mathbb{R}^d$ unstable 
eigenvectors.  Since $DF(p_0)$ is diagonalizable, 
these are linearly independent and form a basis for $\mathbb{R}^n$.
We are interested in the existence of one-dimensional invariant submanifolds,
tangent at $p_0$ to one of these eigenvectors.
Such manifolds are referred to as one-dimensional spectral submanifolds.

One-dimensional submanifolds of the stable/unstable 
manifold are typically not unique 
(in fact occur in continua).  
We select a unique spectral submanifold 
by imposing maximal regularity.  The issue is that,
while a typical one-dimensional stable/unstable 
manifold tangent to an eigenvector is 
only $C^\infty$ at the fixed point $p_0$),
there is a unique analytic one.
The parameterization method is the right
tool in this context. It works by looking for 
an analytic conjugacy to the linear dynamics generated by 
the eigenvalue.  
The following lemma provides a single equation which 
determines a fixed point, its eigendata, and a 
spectral submanifold. We include the elementary proof 
for the sake of completeness.

\begin{lemma}[Parameterization Lemma] \label{lemma:parm}
Let $U \subset \mathbb{R}^d$ be an open, connected set, 
and $F \colon U \to \mathbb{R}^d$ be a 
real analytic map. 
Suppose that $\lambda \in \mathbb{R}$,  that 
$P \colon [-1,1] \to \mathbb{R}^d$ 
is real analytic on $(-1,1)$, that $P$ is continuous on $[-1,1]$, 
and that the pair $(\lambda, P)$ 
 satisfy the conjugacy equation 
\begin{equation} \label{eq:parm1}
F(P(\sigma)) = P(\lambda \sigma), \quad \quad \quad \sigma \in [-1,1].
\end{equation}
If $P'(0) \neq 0$
and $0<|\lambda|<1$, then:
\begin{itemize}
\item $P(0)$ is a fixed point of $F$, 
\item $(\lambda, P'(0))$ is an eigenpair for $DF(P(0))$,
\item $P([-1,1])$ is an analytic curve, tangent 
to the stable eigenvector associated with 
$\lambda$ at $\sigma = 0$, and 
contained in the local stable manifold of $P(0)$.
\end{itemize}
\end{lemma}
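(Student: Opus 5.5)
We will prove the three claims in order, using only the conjugacy equation \eqref{eq:parm1} and elementary calculus; no earlier results of the paper are needed.

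\textbf{Fixed point.} Evaluate \eqref{eq:parm1} at $\sigma = 0$. Since $\lambda \cdot 0 = 0$, this gives $F(P(0)) = P(0)$, so $P(0)$ is a fixed point of $F$.

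\textbf{Eigenpair.} Differentiate \eqref{eq:parm1} with respect to $\sigma$ on $(-1,1)$. The chain rule is valid there because $P$ is analytic on $(-1,1)$, $F$ is analytic near $P(\sigma)$, and $\lambda\sigma \in (-1,1)$ since $|\lambda|<1$. This yields
\[
DF(P(\sigma))\,P'(\sigma) = \lambda P'(\lambda\sigma).
\]
At $\sigma = 0$ this becomes $DF(P(0))\,P'(0) = \lambda P'(0)$. Because $P'(0) \neq 0$ by hypothesis, $(\lambda, P'(0))$ is an eigenpair, and since $0<|\lambda|<1$ it is a stable eigenpair.

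\textbf{Invariance, stable manifold, tangency.} Iterating \eqref{eq:parm1} by induction gives
\[
F^n(P(\sigma)) = P(\lambda^n \sigma) \quad \text{for all } n \geq 0,\ \sigma \in [-1,1].
\]
The induction step needs $\lambda^n\sigma \in [-1,1]$, which holds since $|\lambda|<1$. Then $\lambda^n\sigma \to 0$, and continuity of $P$ on $[-1,1]$ gives $F^n(P(\sigma)) \to P(0)$. The convergence is in fact exponential: near $0$, $|P(\lambda^n\sigma) - P(0)| \le C|\lambda|^n$ because $P$ is $C^1$ near $0$. So every point of $P([-1,1])$ lies in the stable set of $P(0)$, and its forward orbit stays in the compact set $P([-1,1])$.

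To place the curve in the \emph{local} stable manifold, we would use the characterization that, for a suitable neighborhood $V$ of $P(0)$, $W^s_{\mathrm{loc}}$ consists of the points whose forward orbits remain in $V$ and converge to $P(0)$. Points $P(\sigma)$ with $|\sigma|$ small satisfy this. For general $\sigma$ we use $F^n(P(\sigma)) = P(\lambda^n\sigma) \in P([-|\lambda|^n,|\lambda|^n])$: the curve is contained in the stable manifold and forward-invariant, and it is local once the parameter domain is scaled so that $P([-1,1])$ sits in the chosen neighborhood (that is, replace $P(\sigma)$ by $P(\epsilon\sigma)$).

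For analyticity and tangency: $P'(0)\neq 0$ makes $P$ an immersion near $0$, so $P$ traces an analytic curve there, tangent at $\sigma=0$ to $P'(0)$, which is the stable eigenvector.

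\textbf{Main obstacle.} The subtle point is the word ``local'' together with regularity on all of $[-1,1]$. The hypotheses do not exclude $P'(\sigma)=0$ at some $\sigma\neq 0$, and this is the step I expect to be hardest. The plan is to propagate nondegeneracy from $0$ via the differentiated conjugacy $DF(P(\sigma))P'(\sigma)=\lambda P'(\lambda\sigma)$, which gives $P'(\lambda\sigma)=0$ whenever $P'(\sigma)=0$. Iterating, the zeros of $P'$ would accumulate at $0$, contradicting analyticity of $P'$ unless $P'\equiv 0$, which is excluded by $P'(0)\neq 0$. Hence $P'$ never vanishes on $(-1,1)$, so the curve is immersed there. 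Since $P$ is only continuous at $\pm 1$, the statement ``analytic curve'' is meant on $(-1,1)$, with the image extended to $[-1,1]$ by continuity.
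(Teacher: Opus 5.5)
Your proof is correct and follows the paper's argument: you evaluate the conjugacy at $\sigma=0$ for the fixed point, differentiate and evaluate at $0$ for the eigenpair, and iterate to get $F^n(P(\sigma))=P(\lambda^n\sigma)\to P(0)$ by continuity of $P$. Your extra treatment of ``local'' (rescaling $\sigma\mapsto\epsilon\sigma$) and of $P'$ never vanishing on $(-1,1)$ goes beyond the paper, which only shows membership in the stable set and takes tangency to $P'(0)$ as immediate; for the nonvanishing you do not need the identity theorem, since $P'(\lambda^n\sigma)=0$ with $\lambda^n\sigma\to 0$ already contradicts $P'(0)\neq 0$ by continuity of $P'$ at $0$.
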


\begin{proof}
To see that the first point is true, simply evaluate the conjugacy 
of Equation \eqref{eq:parm1} at $\sigma = 0$ to obtain 
\[
F(P(0)) = P(0).
\] 
To see the second point, differentiate Equation \eqref{eq:parm1}
with respect to $\sigma$ to obtain 
\[
DF(P(\sigma)) P'(\sigma) = \lambda P'(\lambda \sigma), \quad \quad \quad \sigma \in (-1,1),
\]
and evaluate at $\sigma = 0$ for  
\[
DF(P(0)) P'(0) = \lambda P'(0),
\] 
with $P'(0) \neq 0$ by hypothesis.

Finally, to establish the third point
note that the fixed point $P(0)$ is in the stable manifold
of $P(0)$ by definition.  Then choose $\sigma_0 \in [-1,1]$, 
$\sigma_0 \neq 0$, and define
\begin{equation} \label{def:orbit}
x_n = P(\lambda^n \sigma_0),
\end{equation}
for each $n \geq 0$.
Note that, since $0 <|\lambda| < 1$ is real, $P(\lambda^n \sigma_0)$ is 
well defined for each $n \geq 0$.

Now,  observe that $\{x_n\}_{n=0}^{\infty}$
is an orbit of $F$.  To see this, apply $F$ to both sides of 
Equation \eqref{def:orbit}, use 
the conjugacy of Equation \eqref{eq:parm1}, and 
then the definition $x_n$ to obtain
\begin{align*}
F(x_n) & = F(P(\lambda^{n} \sigma_0)) \\
& =P(\lambda^{n+1} \sigma_0) \\
& = x_{n+1}.
\end{align*}
Repeated application of this identity gives 
\[
F^n(x_0) = x_n.
\]

To see that the orbit $\{x_n\}_{n=0}^\infty$ is in the stable manifold of 
$P(0)$, note that combining the previous identity with the continuity of $P$ 
and the functional equation gives
\begin{align*}
\lim_{n \to \infty} F^n(x_0) &= \lim_{n \to \infty} F(x_n) \\
& = \lim_{n \to \infty} P(\lambda^{n+1} \sigma_0) \\
&= P\left( \lim_{n \to \infty} \lambda^{n+1} \sigma_0 \right) \\
&= P(0),
\end{align*}
again using that $0 < |\lambda| < 1$.
Since $\sigma_0 \neq 0$ was otherwise arbitrary, this shows that 
the image of $[-1, 0)\cup(0,1]$ under $P$ is a subset of the stable manifold 
attached to $P(0)$ (and the image of $\sigma = 0$ was 
mentioned already above).  Moreover, the image of $P$ is by definition 
tangent to $P'(0)$, which is the stable eigenvector associated 
with the eigenvalue $\lambda$ by the second bullet point.
\end{proof}

The Parameterization Lemma  explains our interest in Equation
\eqref{eq:parm1}, but leaves open questions of existence and uniqueness.  
Here we appeal to Theorem 4.1 of \cite{Cabre_Fontich_delaLlave_2005},
which deals precisely with the one dimensional analytic 
situation at hand (in fact, the theorem
is a little more general than we need, as it allows for non-hyperbolicity,
complex valued maps, and complex eigendata).
More precisely, assume that the following non-resonance condition holds
\begin{equation} \label{eq:nonRes}
\lambda^n \notin \mbox{Spec}(DF(P(0))), \quad \quad \quad \quad n \geq 2.
\end{equation}
From this assumption, 
Theorem 4.1 of \cite{Cabre_Fontich_delaLlave_2005}
 concludes that Equation \eqref{eq:parm1}
has exactly one analytic solution for each choice of eigenvector 
$P'(0)$.  

The choice of the scaling of the eigenvector determines the 
radius of convergence of $P$, and by using the rescaling argument 
from the proof of Theorem 4.1, one can arrange that 
$P$ is analytic on a disk of radius $1 + \epsilon$ in $\mathbb{C}$.
If $F$ is real analytic then so is $P$, and this gives real analyticity on 
 $(-1,1)$ and continunity on $[-1,1]$ as desired.
We stress that the proof of Theorem 4.1
gives that the solution of Equation \eqref{eq:parm1} 
is globally unique as soon as the length of the eigenvector
$P'(0)$ is fixed.

We also note, as observed in Remark 4.2 of \cite{Cabre_Fontich_delaLlave_2005},
that the invertibility of $DF(P(0))$ implies that there is an $N \geq 2$ so that 
Equation \eqref{eq:nonRes} holds for all $n \geq N$.  So, despite first appearances, 
the non-resonance condition imposes only a finite number of constraints on 
the eigenvalues of $DF(P(0))$, and Theorem 4.1 applies generically.

The discussion above motivates the following.
Note that we use the usual trick of 
constraining the square of the norm of the eigenvector,
rather than the norm, to obtain an equivalent but simpler formula
for the derivative.

\begin{lemma}[Functional equation, stable case]\label{def:funstable}
Fix $s > 0$ and define the operator 
$\Psi_s \colon \mathbb{R} \times C^\omega([-1,1], \mathbb{R}^d)
\to \mathbb{R} \times C^\omega([-1,1], \mathbb{R}^d)$ by 
\begin{equation} \label{eq:FE1}
\Psi_s(\lambda, P(\sigma)) = \begin{pmatrix}\|P'(0)\|^2 - s\\
F(P(\sigma)) - P(\lambda \sigma)\end{pmatrix}\nonumber
\end{equation}
If $(\lambda, P)$ is a zero of $\Psi_s$ with $0 < |\lambda| < 1$, 
then $P$ satisfies the hypotheses
of Lemma \ref{lemma:parm}.  If the non-resonance condition of 
Equation \eqref{eq:nonRes} is satisfied, then there exists an $\epsilon > 0$
so that for all $0 < s \leq \epsilon$, $\Psi_s$ 
has a globally unique solution.
\end{lemma}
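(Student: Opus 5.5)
The proof splits into two parts, matching the two claims of the lemma.

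\emph{First claim.} Suppose $\Psi_s(\lambda,P)=0$ with $0<|\lambda|<1$. The first component of $\Psi_s$ gives $\|P'(0)\|^2=s>0$, so $P'(0)\neq 0$. The second component is exactly the conjugacy equation \eqref{eq:parm1}. By the definition of the domain, $P\in C^\omega([-1,1],\mathbb{R}^d)$, so $P$ is real analytic on $(-1,1)$ and continuous on $[-1,1]$, and $\lambda\in\mathbb{R}$. These are precisely the hypotheses of Lemma \ref{lemma:parm}. The one thing to check is that $P(\sigma)$ lies in the domain $U$ of $F$ for $\sigma\in[-1,1]$. This is implicit in writing $F(P(\sigma))$, so I would state it as part of what a zero of $\Psi_s$ means.

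\emph{Second claim: setting up uniqueness.} Fix the hyperbolic fixed point $p_0$ and the stable eigenvalue $\lambda$, and let $\xi$ be a unit eigenvector, assuming the eigenvalue is simple. I would invoke Theorem 4.1 of \cite{Cabre_Fontich_delaLlave_2005} under the non-resonance condition \eqref{eq:nonRes}. For each choice of eigenvector length $\beta$ it gives exactly one analytic $P_\beta$ with $P_\beta'(0)=\beta\xi$ solving \eqref{eq:parm1}. The rescaling structure then gives $P_\beta(\sigma)=P_1(\beta\sigma)$.

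\emph{Second claim: existence for small $s$.} Let $\rho>0$ be the radius of analyticity of $P_1$ furnished by the theorem. Setting $\beta=\sqrt{s}$, the function $P_{\sqrt s}(\sigma)=P_1(\sqrt s\,\sigma)$ is analytic on a neighborhood of $[-1,1]$ as soon as $\sqrt s<\rho$. Shrinking further makes its image stay in $U$, by continuity at $\sigma=0$. So I would take $\epsilon<\rho^2$, small enough for both conditions. Then $(\lambda,P_{\sqrt s})$ is a zero of $\Psi_s$ for every $0<s\le\epsilon$.

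\emph{Second claim: uniqueness.} Suppose $(\mu,Q)$ is any zero of $\Psi_s$ with $0<|\mu|<1$. By the first claim and Lemma \ref{lemma:parm}, $Q(0)$ is a fixed point and $(\mu,Q'(0))$ is a stable eigenpair of $DF(Q(0))$. We must take the fixed point $p_0$ and the eigen-direction as given data; otherwise uniqueness fails trivially because of other fixed points. Given this, $\mu=\lambda$ and $Q'(0)=\pm\sqrt{s}\,\xi$. The global uniqueness statement of Theorem 4.1 then forces $Q=P_{\sqrt s}$, or $Q(\sigma)=P_{\sqrt s}(-\sigma)$ if the sign is flipped. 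So the solution is unique up to the sign symmetry $\sigma\mapsto-\sigma$, which I would quotient out by fixing the orientation of $\xi$.

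\emph{Main obstacle.} The hard part is making "globally unique" precise. The sign ambiguity and the choice of fixed point and eigenvalue must be fixed, and the uniqueness from \cite{Cabre_Fontich_delaLlave_2005} must be argued to hold among all analytic solutions, not merely locally. I would handle this by noting that the power series coefficients are determined recursively: the order-$n$ equation is $(DF(p_0)-\lambda^n)a_n=\text{known}$, which is uniquely solvable by \eqref{eq:nonRes}. Two analytic solutions with the same $P(0)$ and $P'(0)$ therefore have identical Taylor series and coincide.
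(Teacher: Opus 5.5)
Your proposal is correct and follows the same route as the paper. The paper gives no separate proof; it justifies the lemma by the preceding discussion. There, the first claim is immediate because $\|P'(0)\|^2=s>0$ forces $P'(0)\neq 0$, and the second claim comes from Theorem 4.1 of \cite{Cabre_Fontich_delaLlave_2005} together with the rescaling $P_\beta(\sigma)=P_1(\beta\sigma)$.

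Your additional points are sound refinements of details the paper leaves implicit. First, ``globally unique'' only makes sense once the fixed point, the eigen-direction and the sign of $\xi$ are fixed, since $(\lambda,P(-\sigma))$ is always a second zero of $\Psi_s$. Second, your Taylor recursion $(DF(p_0)-\lambda^n)a_n=\text{known}$ gives uniqueness among all real analytic solutions on $[-1,1]$, without relying on the cited theorem for that step.
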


Consideration of the unstable manifold leads to a similar zero 
finding problem.  Since we assume that $P(0)$ is a hyperbolic 
fixed point, we have that $F$ is a local diffeomorphism near $P(0)$.
Let $F^{-1}$ denote a local inverse with $F^{-1}(P(0)) = P(0)$, and suppose 
that $Q \colon [-1, 1] \to \mathbb{R}^d$ is a real analytic function with 
\begin{equation} \label{eq:FE2}
F^{-1} Q(\sigma) = Q(\mu \sigma) \quad \quad \quad \sigma \in [-1,1],
\end{equation} 
with $Q'(0) \neq 0$ and $0 < |\mu| < 1$.  Then by Lemma \ref{lemma:parm}, 
$Q(0)$ is fixed for $F^{-1}$, $(\mu, Q'(0))$ are a stable eigenpair for 
$DF^{-1}(Q(0))$, and $Q$ parameterizes a subset of the local stable 
manifold  attached to $Q(0)$ and tangent to the eigenvector of $\mu$.
It is then a standard result from spectral theory that $(\lambda, Q'(0))$ with 
$\lambda = \mu^{-1}$ is an unstable eigenpair for $DF(Q(0))$.  Moreover, 
$Q$ parameterizes an arc in the unstable manifold attached to $Q(0)$
and tangent to the eigenvector $Q'(0)$ of $\lambda$.  
Summarizing, we have the following.

\begin{lemma}[Functional equation, unstable case]\label{def:fununstable}
Fix $u > 0$ and define the operator 
$\Psi_u \colon \mathbb{R} \times C^\omega([-1,1], \mathbb{R}^d)$
\begin{equation}
\Psi_u(\mu, Q(\sigma)) = \begin{pmatrix}\|Q'(0)\|^2 - u\\[2ex]
F(Q(\mu \sigma)) - Q(\sigma)\end{pmatrix}\nonumber
\end{equation}
If $(\mu, Q)$ is a zero of $\Psi_u$ with $0 < |\mu| < 1$, then
$Q(0)$ is a fixed point of $F$, $(\mu^{-1}, Q'(0))$ is an unstable 
eigenpair for $DF(Q(0))$, and $Q$ parameterizes a one-dimensional 
arc in the unstable manifold attached to $Q(0)$.  This arc is 
tangent to the unstable eigenspace associated with $\mu^{-1}$.
If $\lambda = \mu^{-1}$ satisfies the non-resonance condition of Equation \eqref{eq:nonRes}, then there exists an $\epsilon > 0$ so that 
for all $0 < u \leq \epsilon$, $\Psi_u$ has a globally 
unique solution.
\end{lemma}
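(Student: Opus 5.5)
The plan is to reduce the unstable case to the stable case for a local inverse. After that reduction, Lemma \ref{lemma:parm} and Lemma \ref{def:funstable} do almost all the work.

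First I treat the structural claims. Suppose $(\mu,Q)$ is a zero of $\Psi_u$ with $0<|\mu|<1$.
\begin{itemize}
\item Evaluating $F(Q(\mu\sigma))=Q(\sigma)$ at $\sigma=0$ gives $F(Q(0))=Q(0)$, so $Q(0)$ is a fixed point of $F$.
\item Differentiating at $\sigma=0$ gives $\mu\, DF(Q(0))Q'(0)=Q'(0)$. Since $\|Q'(0)\|^2=u>0$, we have $Q'(0)\neq 0$, so $DF(Q(0))$ is invertible on $Q'(0)$ with eigenvalue $\mu^{-1}$, and $|\mu^{-1}|>1$.
\item Assuming $Q(0)$ is hyperbolic, and in particular $DF(Q(0))$ is invertible, as in the surrounding discussion, the inverse function theorem gives a real analytic local inverse $F^{-1}$ near $Q(0)$.
\item For $\sigma$ in a neighborhood $[-\delta,\delta]$, the point $Q(\mu\sigma)$ lies in the domain of that inverse, so $Q(\mu\sigma)=F^{-1}(Q(\sigma))$.
\end{itemize}

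To apply Lemma \ref{lemma:parm} on all of $[-1,1]$, I rescale. Set $\tilde Q(\sigma)=Q(\delta\sigma)$. This function satisfies the same conjugacy $F^{-1}(\tilde Q(\sigma))=\tilde Q(\mu\sigma)$ on $[-1,1]$ with $\tilde Q'(0)\neq 0$. Lemma \ref{lemma:parm} applied to $F^{-1}$ then shows that $\tilde Q([-1,1])$ lies in the local stable manifold of $Q(0)$ for $F^{-1}$, which is the local unstable manifold for $F$. The curve is tangent to $Q'(0)$, the eigenvector of $\mu^{-1}$.

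The full arc $Q([-1,1])$ is obtained by forward iteration. Any $\sigma\in[-1,1]$ satisfies $Q(\sigma)=F^n(Q(\mu^n\sigma))$, and $\mu^n\sigma\in[-\delta,\delta]$ for $n$ large. So every point of $Q([-1,1])$ is a forward image of a point on the local unstable manifold, and hence lies in the unstable manifold of $Q(0)$. Backward orbits converge to $Q(0)$ by the same continuity argument as in Lemma \ref{lemma:parm}.

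For existence and uniqueness I mirror Lemma \ref{def:funstable}. Let $\lambda=\mu^{-1}$ satisfy \eqref{eq:nonRes} for $DF(p_0)$. The relevant condition for $F^{-1}$ is $\mu^n\notin\mathrm{Spec}(DF^{-1}(p_0))$ for $n\ge2$, which is equivalent to $\lambda^n\notin\mathrm{Spec}(DF(p_0))$ for $n\ge 2$. Theorem 4.1 of \cite{Cabre_Fontich_delaLlave_2005} then yields a unique analytic $Q$ solving $F^{-1}\circ Q=Q(\mu\,\cdot)$ for each fixed eigenvector length. By the rescaling argument, for small enough scaling $u\le\epsilon$ this $Q$ is analytic on a disk of radius greater than $1$.

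Applying $F$ to that equation gives $F(Q(\mu\sigma))=Q(\sigma)$. Conversely, every analytic zero of $\Psi_u$ is, near $0$, a solution of the inverse equation. By the uniqueness of Taylor coefficients (the same non-resonant recursion) and analytic continuation, it coincides with the constructed solution, so the solution is globally unique.

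I expect the main obstacle to be the domain issue. $F^{-1}$ is only a local inverse, so the identity $F(Q(\mu\sigma))=Q(\sigma)$ cannot be inverted on all of $[-1,1]$; it can only be inverted near $0$. The rescaling to $[-\delta,\delta]$ followed by forward iteration handles this. Uniqueness must also be argued through power series coefficients rather than through the inverse map globally.
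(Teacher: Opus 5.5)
Your proposal is correct and follows the paper's route: use a local inverse to turn $F(Q(\mu\sigma))=Q(\sigma)$ into the stable conjugacy $F^{-1}(Q(\sigma))=Q(\mu\sigma)$, apply Lemma~\ref{lemma:parm} to $F^{-1}$, and transfer the non-resonance condition to invoke Theorem 4.1 of \cite{Cabre_Fontich_delaLlave_2005}; your rescaling to $[-\delta,\delta]$ followed by forward iteration supplies a justification the paper leaves implicit, namely that the local inverse can actually be applied along the arc. One caveat you share with the paper: ``globally unique'' can only hold once the fixed point, $\mu$, and the vector $Q'(0)$ itself (not merely its length) are fixed, since $\sigma\mapsto Q(-\sigma)$ is also a zero of $\Psi_u$, so your final coincidence argument should be stated under that normalization.
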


The lemma is established by applying $F^{-1}$ to the second component
equation of $\Psi_u$ and observing that this yields Equation \eqref{eq:FE2}.

In Section \ref{sec:numerics}, we require the Frech\'{e}t derivatives of $\Psi_{s}$ and $\Psi_{u}$. 
%
The Frech{\'e}t derivatives are written as follows, where $\lambda, \mu, h \in \mathbb{C}$ and 
$P, Q, H \in C^\omega(\mathbb{B}, \mathbb{R}^d)$. 
\begin{equation}\label{eq:derivsall}
\begin{array}{cccc}
D_{\lambda}\Psi_s(\lambda,P(\sigma))[h] &=& \begin{pmatrix}
0\\
-P'(\lambda \sigma) \sigma h
\end{pmatrix}, \\[3ex]
D_{P}\Psi_s(\lambda,P(\sigma))[H] &=& \begin{pmatrix}
2 \langle P'(0),H'(0) \rangle \\
DF(P(\sigma)) H(\sigma) - H(\lambda \sigma)
\end{pmatrix}, \\[2ex]
D_{\mu}\Psi_u(\mu,P(\sigma))[h] &=& \begin{pmatrix}
0\\
DF(Q(\mu \sigma)) Q'(\mu \sigma) \sigma h
\end{pmatrix}, \\[3ex]
D_{P}\Psi_u(\mu,P(\sigma))[H] &=& \begin{pmatrix}
2 \langle Q'(0),H'(0) \rangle \\
DF(Q(\mu \sigma)) H(\mu \sigma) - H(\sigma) 
\end{pmatrix}
\end{array}
\end{equation}

\begin{remark}[Generalizations] \label{rem:genParm}
{\em
The parameterization method generalizes to the case when there are 
resonances.  However, one cannot then analytically conjugate
to linear dynamics.  Instead, one studies a conjugacy equation 
of the form 
\[
F(P(\sigma)) = P(K(\sigma)), 
\]
where $K$ is a polynomial map whose degree and 
monomial terms are determined by the 
resonances.  More than this, the parameterization method extends to 
higher dimensional spectral submanifolds, 
non-hyperbolic fixed points, $C^k$ regularity,
and even to other kinds of invariant objects, 
for both maps and vector fields
defined on Banach spaces.  Much more general treatment, 
and many additional 
references, are found in the paper
\cite{Cabre_Fontich_delaLlave_2003}, and also to the book 
\cite{HARO_2018}.  See also Appendix B of 
\cite{Cabre_Fontich_delaLlave_2005}
for a scholarly discussion of the historical 
development of these ideas.
}
\end{remark}

\subsection{Stable and unstable manifolds of a
planar hyperbolic periodic orbit} \label{sec:parmPlanarMaps}

As an application of the ideas presented in the previous section, we discuss
a parameterization method for stable/unstable manifolds of periodic orbits
of maps.  By restricting to saddle type orbits of planar maps, we 
obtain an especially complete picture.
Note, the material in this section is adapted from 
\cite{MR3706909}, 
where generalizations to higher 
dimensional maps and manifolds are also found.
We refer also to the paper \cite{timsina_parameterized_2022}
where extensions to implicitly defined maps  
given by polynomial relations are considered.

Let $S \subset \mathbb{R}^2$ be an open and connected set, and  
suppose that $f \colon S \to \mathbb{R}^2$ is a real analytic mapping. 
Let us also assume that $f$ is orientation preserving 
(as is the case for the mathematical billiard maps considered 
in the present work).
Let $F \colon S^K \to \mathbb{R}^{2K}$ be the multiple shooting map
for $f$ as defined by Equation \eqref{eq:multShoot}. 
Let $\Psi_s$ be the operator 
defined in Lemma \ref{def:funstable} where  $s > 0$ is fixed.  Suppose that  for a fixed value of $\lambda$ with
$0<|\lambda|<1$ and a fixed function $P \colon [-1, 1] \to \mathbb{R}^{2K}$ we have that
$\Psi_s(\lambda, P) = {\bf 0}$. 
Combining Lemma \ref{lemmq:floquet} from Section \ref{sec:multishoot}
with Lemma \ref{lemma:parm} from Section \ref{sec:parameterization}, 
we have the following:
\begin{itemize}
\item $P(0) \in \mathbb{R}^{2K}$ is a fixed point of the multiple shooting map $F$.
Letting $P(0) = (p_1, \ldots, p_K)$ with $p_1, \ldots, p_K \in \mathbb{R}^2$,
each $p_j$ has period $K$ for $f$. That is, the components of $P(0)$ provide a periodic 
orbit for $f$.  If $p_k \neq p_j$ when $j \neq k$, then $K$ is the least period.
\item $\lambda^m = \alpha$ is a stable multiplier for the periodic orbit.  Moreover, writing 
$P'(0) = (\xi_1, \ldots, \xi_K)$ with $\xi_1, \ldots, \xi_K \in \mathbb{R}^2$, we have that 
$\xi_j$ is a stable eigenvector for $Df^K(p_j)$.  
\item Let $P(\sigma) = (P_1(\sigma), \ldots, P_K(\sigma))$ with each $P_j$ taking values in 
$\mathbb{R}^2$.  Then $P_j(\sigma)$ parameterizes a subset of the stable manifold 
attached to $p_j$, and is tangent at zero to the $\xi_j$.
\end{itemize}

Only the last point requires justification.
Note that in the multiple shooting case, the functional equation 
\[
F(P(\sigma)) = P(\lambda \sigma), 
\]
can be expressed in terms of $f$ as
\begin{align*}
f(P_K(\sigma)) &= P_1(\lambda \sigma) \\
f(P_1(\sigma)) &= P_2(\lambda \sigma) \\
f(P_2(\sigma)) &= P_3(\lambda \sigma) \\
& \vdots \\
f(P_{K-1}(\sigma)) &= P_K(\lambda \sigma) \\
\end{align*}

Applying $f$ to the last equation gives 
\[
f^2(P_{K-1})(\sigma) = f(P_K(\lambda\sigma)),
\]
combining this with the first equation leads to the following. 
\[
f^2(P_{K-1})(\sigma) = P_1(\lambda^2 \sigma).
\]
Continuing in this way, we obtain 
\[
f^K(P_1(\sigma)) = P_1(\lambda^K \sigma).
\]
But this is the parameterization method for the map $f^K$. 

Restarting the whole argument from the second to last equation 
leads to 
\[
f^K(P_2(\sigma)) = P_2(\lambda^K \sigma),
\]
and working our way through each component equation 
in a similar way leads to  
\[
f^K(P_j(\sigma)) = P_j(\lambda^K \sigma).
\]
for each $1 \leq j \leq K$.

Since we assume that the orbit is hyperbolic,
and that the mapping $f$ is a planar, real analytic, and orientation preserving,
we have that the orbit has two real multipliers 
$\alpha, \beta$ with $0 < |\alpha| < 1$ and $|\beta| > 1$. 
Moreover, $\alpha \beta = 1$, so that they are either 
both positive, or both negative.  In the present work 
we consider only the case when both multipliers are 
positive.  In this case the eigenvalues of $DF(P(0))$ 
sort into two sets: the $K$ complex $K$-th roots of 
$\alpha$, and the $K$ complex $K$-th roots of $\beta$. 
So, every eigenvalue of 
$DF(P(0))$ has either $|\lambda| = \alpha^{1/K}$ or $|\lambda| = \beta^{1/K}$.
Since all the stable eigenvalue have the same (non-unit)
modulus, no power of one can equal another, and the 
non-resonance conditions 
\[
\lambda^n \notin \mbox{Spec}DF(P(0)), 
\]
holds for all $n \geq 2$, and similarly for the unstable eigenvalues. 

So, the non-resonance conditions are automatically satisfied 
and the existence of the desired parameterization is assured.  
Moreover, since $\alpha$ and $\beta$ are positive, each has a 
real $K$-th root, and associated real eigenvectors.
Then the operators $\Psi_s$ and $\Psi_u$
defined in Equations \eqref{def:funstable} and \eqref{def:fununstable}
have unique, real solutions as long as $u$ and $s$ remain in the range $0 < s, u < \epsilon$ for some $\epsilon>0$.  
In practice it may not necessarily be the case that 
$\epsilon$ is small, and numerical heuristics are discussed 
below.

\subsection{Numerical computation of the local parameterizations}
\label{sec:numerics}

Let 
\begin{equation}\label{eq:polyorderN}
P(\sigma) = \sum_{n=0}^\infty p_n \sigma^n = 
\sum_{n=0}^\infty
\left(
\begin{array}{c}
p_n^1 \\
\vdots \\
p_n^d
\end{array}
\right)
\sigma^n, 
\end{equation}
denote a formal power series 
with $p_n \in \mathbb{R}^{d}$.
Now choose $N \in \mathbb{N}$ and write 
\begin{equation}\label{eq:PN}
P^N(\sigma) = \sum_{n=0}^N p_n \sigma^n,
\end{equation}
to denote the truncation of $P$ to order $N$.
We then embed the coefficients of 
$P^N$ in $\mathbb{R}^{d(N+1)}$ by 
forming the column vector (written as a row to save space) 
\begin{equation} \label{eq:bold_p}
\mathbf{p} = (p_0^1,\dots,p_N^1, p_0^2,\dots,p_N^2, \dots,p_0^d,\dots,p_N^d)
\in \mathbb{R}^{d(N+1)}.
\end{equation}
That is, for each $0 \leq n \leq N$, the 
numbers $p_n^i$ with $1 \leq i \leq d$ are
the components of $p_n \in \mathbb{R}^d$.

Now, given $\mathbf{p} \in \mathbb{R}^{d(N+1)}$, we write 
\[
\mathbf{q} = \mbox{DFT}[F](\mathbf{p}), 
\]
to signify that the column vector
\[
\mathbf{q} = (q_0^1,\dots,q_N^1, q_0^2,\dots,q_N^2, \dots,q_0^d,\dots,q_N^d)
\in \mathbb{R}^{d(N+1)},
\]
is the vectorization of the series coefficients of 
\[
Q^N(\sigma) = \sum_{n=0}^N q_n \sigma^n \approx F(P^N(\sigma)), 
\quad \quad |\sigma| < 1,
\]
where for each $1 \leq i \leq d$ we compute the series 
coefficients $\{q_0^i, \ldots, q_N^i\}$
by interpolating the components $F_i \circ P^N$, $1 \leq i \leq d$
of the composition via the DFT interpolation scheme discussed 
in Appendix \ref{sec:DFT}.

Similarly, for $1 \leq i, j \leq d$
define the coefficient sequences 
\[
\mathbf{b}^{ij}(\mathbf{p}) = 
(b_0^{ij},\dots,b_N^{ij}),
\]
which result from applying 
the DFT algorithm discussed in Appendix \ref{sec:DFT} to 
the composition maps $DF_{ij} \circ P$. That is, we let 
\[
\mathbf{b}^{ij}(\mathbf{p}) = \mbox{DFT}\left[\partial_j F_i\right](P^N),
\]
as these coefficients have 
\[
\sum_{n=0}^N b_n^{ij} \sigma^n \approx \partial_j F_i(P^N(\sigma)) 
\quad \quad |\sigma| < 1,
\]
where $F_i$, $1 \leq i \leq d$ denote the component maps of $F$.
We then define 
the $d^2$ many $(N+1) \times (N+1)$ matrices 
\begin{equation} \label{eq:def_Bij}
B^{ij}(\mathbf{p}) = \mbox{ConvMat}(\mathbf{b}^{ij}(\mathbf{p})),
\end{equation}
where the convolution matrix is as defined in Equation \eqref{eq:convMat}
of Appendix \ref{sec:multMat}.

To vectorize the composition $P(\lambda \sigma)$ and its derivatives
we first let $\Lambda, \Sigma$ denote the $(N+1) \times (N+1)$ diagonal matrices 
\[
\Lambda = \left(
\begin{array}{ccccc}
1 & 0 & 0 & \ldots & 0 \\
0 & \lambda & 0 & \ldots & 0 \\
0 & 0 & \lambda^2 & \ldots & 0 \\
\vdots & \vdots & \vdots & \ddots & \vdots \\
0 & 0 & 0 & \ldots & \lambda^N
\end{array}
\right), 
\quad \quad \mbox{and} \quad \quad 
\Sigma = \left(
\begin{array}{ccccc}
0 & 0 & 0 & \ldots & 0 \\
0 & 1 & 0 & \ldots & 0 \\
0 & 0 & 2 & \ldots & 0 \\
\vdots & \vdots & \vdots & \ddots & \vdots \\
0 & 0 & 0 & \ldots & N
\end{array}
\right),
\]
and define $\mathbf{L}_\lambda$
and $\mathbf{N}_N$, the $d(N+1) \times d(N+1)$ diagonal matrices 
\begin{equation*} 
\mathbf{L}_{\lambda} = \left(
\begin{array}{cccc}
\Lambda& \mathbf{0}_{\mathbb{R}^{N+1}} & \ldots & \mathbf{0}_{\mathbb{R}^{N+1}} \\
\mathbf{0}_{\mathbb{R}^{N+1}} & \Lambda & \ldots & \mathbf{0}_{\mathbb{R}^{N+1}}\\
\vdots & \vdots & \ddots & \vdots \\
\mathbf{0}_{\mathbb{R}^{N+1}} & \mathbf{0}_{\mathbb{R}^{N+1}}& \ldots  & \Lambda
\end{array}
\right),
\quad \quad \mbox{and} \quad \quad 
\mathbf{N}_{N} = \left(
\begin{array}{cccc}
\Sigma& \mathbf{0}_{\mathbb{R}^{N+1}} & \ldots & \mathbf{0}_{\mathbb{R}^{N+1}} \\
\mathbf{0}_{\mathbb{R}^{N+1}} & \Sigma & \ldots & \mathbf{0}_{\mathbb{R}^{N+1}}\\
\vdots & \vdots & \ddots & \vdots \\
\mathbf{0}_{\mathbb{R}^{N+1}} & \mathbf{0}_{\mathbb{R}^{N+1}}& \ldots  & \Sigma
\end{array}
\right),
\end{equation*}
and note that $\mathbf{L}_\lambda \mathbf{p}$ and $\mathbf{N}_N \mathbf{p}$
are, respectively, the vectorizations of the coefficients of 
$P^{N}(\lambda \sigma)$ and $\sigma (P^N)'(\sigma)$.

Truncating the domain and range of $\Psi_s$ at order $N$
and vectorizing the inputs and outputs leads to the map
$\mathbf{\Psi}_s^N \colon \mathbb{R}^{d(N+1)+1} \to \mathbb{R}^{d(N+1)+1}$
given by 
\begin{equation} \label{eq:PsiN}
    \mathbf{\Psi}_s^N(\lambda, \mathbf{p}) = \left(
        \begin{array}{c}
            (p_1^1)^2 + \ldots + (p_1^d)^2 - s \\[2ex]
             \mbox{DFT}[F](\mathbf{p}) - \mathbf{L}_\lambda \mathbf{p}
        \end{array}
    \right).
\end{equation}

To vectorize the derivative, we recall the formulas for the action 
of the Frech\'{e}t derivatives given in Equation \eqref{eq:derivsall}.
First, for $1 \leq n \leq N$ let $e^*_n$ denote the $N+1$
dimensional row vector with a one in the $n$-th entry and 
zeros elsewhere. Keep in mind that we start with a zeroeth element. 
Therefore the row vector $e_1^*$ is given by
\[
e^*_1 = (0, 1, 0, \ldots, 0).
\]
So, if 
\[
H^N(\theta) = \sum_{n=0}^N h_n \sigma^n, 
\]
with $h_n \in \mathbb{R}^d$, 
then the first component of the  partial derivative
$D_P \Psi_s(\lambda,P)[H]$
given in Equation \eqref{eq:derivsall}
is 
\begin{align*}
2 \left<(P^N)'(0), (H^N)'(0)\right> & = 
2 \left(p_1^1 h_1^1 + \ldots + p_1^d h_1^d \right) \\
& = \left(
\begin{array}{cccccccc}
2 p_1^1 e_1^* &|&
2 p_1^2 e_1^* &|& 
\ldots &|& 
2 p_1^d e_1^* \\
\end{array}
\right)
\left(
\begin{array}{c}
h_0^1 \\
\vdots \\
h_N^1 \\
\vdots \\
h_0^d \\
\vdots \\
h_N^d
\end{array}
\right).
\end{align*} 
Suppressing the action on $H^N$, the partial derivative
is represented simply by the $1 \times d (N+1)$ row vector 
\[
n(\mathbf{p}) = 
\left(
\begin{array}{cccccccc}
2 p_1^1 e_1^* &|&
2 p_1^2 e_1^* &|& 
\ldots &|& 
2 p_1^d e_1^*
\end{array}
\right).
\]

Similarily, the nontrivial component of the partial derivative $D_\lambda \Psi_s(\lambda,P)$ given in Equation \eqref{eq:derivsall} is the negative of $(P^N)'(\lambda \sigma) \sigma $, and this has vector representation as 
the $d(N+1)$ dimensional column vector 
\[
\mathbf{c}_\lambda(\mathbf{p}) = \mathbf{N}_N \mathbf{L}_{\lambda}(\mathbf{p}).
\]
For the second component of partial derivative $D_P \Psi_s(\lambda,P)[H]$, following Equation \eqref{eq:derivsall}, 
we vectorize $DF(P^N)$ as follows. 
Let $\mathbf{B}(\mathbf{p})$ denote the $d(N+1) \times d(N+1)$
matrix 
\[
\mathbf{B}(\mathbf{p}) = \left(
\begin{array}{ccc}
B^{00}(\mathbf{p}) &\ldots & B^{0d}(\mathbf{p}) \\
\vdots & \ddots & \vdots \\
B^{d0}(\mathbf{p}) & \ldots & B^{dd}(\mathbf{p})
\end{array}
\right),
\]
where for $1 \leq i, j \leq N$ the $B^{ij}(\mathbf{p})$ are the 
$(N+1) \times (N+1)$ convolution matrices 
defined in Equation \eqref{eq:def_Bij}.
So, the derivative of $\mathbf{\Psi}_s^N$ has matrix representation 
\[
D \mathbf{\Psi}_s^N(\mathbf{p}) = 
\left(
\begin{array}{cc}
0 & n(\mathbf{p}) \\[1.1ex]
-\mathbf{c}_\lambda(\mathbf{p}) & \mathbf{B}(\mathbf{p}) - \mathbf{L}_\lambda(\mathbf{p}) 
\end{array}
\right).
\]

By a nearly identical argument,
using the same notation, one checks that the vectorization of 
$\Psi_u$ and its derivative are
\begin{equation} \label{eq:Psi_u_N}
    \mathbf{\Psi}_u^N(\mu, \mathbf{p}) = \left(
        \begin{array}{c}
            (p_1^1)^2 + \ldots + (p_1^d)^2 - u \\[2ex]
             \mbox{DFT}[F](\mathbf{L}_\mu \mathbf{p}) -  \mathbf{p}
        \end{array}
    \right),
\end{equation}
and 
\[
D \mathbf{\Psi}_u^N(\mathbf{p}) = 
\left(
\begin{array}{cc}
0 & n(\mathbf{p}) \\[1.5ex]
\mathbf{B}(\mathbf{L}_\mu \mathbf{p})\mathbf{c}_\mu(\mathbf{p}) & 
\mathbf{B}(\mathbf{L}_\mu \mathbf{p})
\mathbf{L}_{\mu}-\mathbf{I}
\end{array}
\right),
\]
where $\mathbf{I}$ is the $d(N+1) \times d (N+1)$ 
identity matrix.  

\medskip

Now consider the special case where 
$F = F_K$ is the multiple shooting 
map defined in Equation \eqref{eq:multShoot}, with  
underlying planar map $f$, so that $d = 2K$.
Let $\bar {\bf p}_0 \in \mathbb{R}^{2K}$ be an approximate 
fixed point of the multiple shooting map, in which case
the planar components of $\bar {\bf p}_0$ give an approximate period 
$K$ orbit for $f$.  Suppose that these components are distinct so that $K$ is 
the least period. 

Assume that $(\bar \lambda, \bar {\bf p}_1)$ is a (numerically 
computed approximate) real stable (or unstable) 
eigenpair for $DF^K(\bar {\bf p}_0)$. 
Fix $s$ (or $u$) a nonzero real number, and 
rescale $\bar {\bf p}_1$ so that $\| \bar {\bf p}_1 \|^2 = s$
(or $u$).  Let 
\begin{equation} \label{eq:P0}
P^0(\sigma) = \bar{{\bf p}}_0 + \sigma \bar{{\bf p}}_1.
\end{equation}
If the data $\bar \lambda, \bar{\bf p}_0, \bar{\bf p}_1$ were exact, then $P^0$ would be 
a quadratically good approximate solution of $\Psi_{s,u} = 0$.  That 
is, we would have 
\[
\|\Psi_s(P^0)\| \leq C s^2.
\]
So, assuming that $\bar{\bf p}_0$ and $\bar{\bf p}_1$ are good enough 
approximations, we
expect to be able to find an $s,u > 0$ so that this initial error is small
(limited only by the error in the initial data which is, presumably, 
on the order of machine epsilon).

Let $\mathfrak{p}^0$ be the vectorization of $P^0(\sigma)$,
and define the Newton sequence 
\[
\mathfrak{p}^{m+1} = \mathfrak{p}^m + \mathbf{\Delta}^m,
\]
where, for $m \geq 0$, $\mathbf{\Delta}^m$ solves the linear equation 
\[
D\mathbf{\Psi}_{s,u}^N(\mathfrak{p}^m) \mathbf{\Delta}_m = 
- \mathbf{\Psi}_{s,u}^N(\mathfrak{p}^m).
\]

In practice we experiment with the scalings $s,u$, as we 
would like to choose the largest such values so that
the Newton sequence converges.  More sophisticated continuation 
schemes and optimization algorithms for choosing 
computational parameters for the parameterization method 
are discussed in \cite{MR3437754,MR4217108}

\begin{remark}[Automatic reducibility] \label{rem:reducibility}
{\em
Note that the Newton method just described is a ``large matrix method''.
More precisely, if $K$ is the period of the orbit, and $N$ is the polynomial 
order of approximation, then the linear system determining the Newton 
step has size $(2K(N+1) +1) \times (2K(N+1)+1)$.  While this presents
no problem in the present work, the complexity
can be greatly reduced by exploiting the symplectic structure of
the billiard system to define a diagonal approximate inverse for the 
Newton matrix.  This is referred to as ``approximate reducibility"
in the literature, and the interested reader will find
much more on this topic, including many references to its
use in the literature, in the book \cite{HARO_2018}.  
Using approximate reducibility would 
be helpful for studying orbits with much higher period than 
those studied here.}
\end{remark}

\begin{remark}[The method of jet transport] \label{rem:jetTransport}
{\em We note that an alternative to the DFT based interpolation scheme
described above would be to compute 
necessary compositions  
using the method of jet transport described in \cite{MR4904407}.
Indeed, the reference just cited uses jet transport to 
solve equations similar to the conjugacy equations considered 
in the present work, applying these ideas to Poincar{\'e} maps for ODEs in celestial mechanics.
These implicit Poincar{\'e} maps are, if anything, more complicated than 
the billiard maps studied in the present work, so we assume
that the method or jet transport could be applied to billiards 
as well.  The only disadvantage of jet transport  is that 
it requires special software like the TAYLOR
package developed by the authors of the reference just 
cited.  Our implementation relies only on standard
numerical linear algebra routines like linear system solvers,
eigenvalue/eigenvector solvers, and (if desired) the FFT.  
These algorithms are available in any standard numerical 
linear algebra package.  Our codes for example are implemented
in MATLAB.} 
\end{remark}

A useful a-posteriori error indicator is obtained by measuring the 
defect in the conjugacy relation.  So, define  
$E_{\mbox{conj}} \colon \mathbb{R} \times C^\omega(\mathbb{D}, \mathbb{R}^{2K})
\to \mathbb{R}$ by 
\begin{equation}
E_{\text{conj}}(\lambda, P) = \sup_{\sigma \in \mathbb{D}}
\left\|F(P(\sigma)) - P(\lambda \sigma)\right\|_{\mathbb{R}^{2K}}     \nonumber 
\end{equation}
Note that $E_{\mbox{conj}}(\bar \lambda,\bar P) = 0$ if and only if $(\bar \lambda, \bar P)$ 
is the desired solution.
A numerical proxy for the a-posteriori indicator is obtained by 
sampling the right hand side at
$M \in \mathbb{N}$ points on the boundary of $\mathbb{D}$,
and computing the maximum of this finite set.  More precisely, 
by the maximum modulus principle we have that  
\begin{equation}
E_{\text{conj}}^M(\lambda, P) = \max_{0 \leq m \leq M}
\left\|F\left(P \left(e^{\frac{2 \pi i m}{M+1}}\right)\right)
- P^N\left(\lambda e^{\frac{2 \pi i m}{M+1}}\right)\right\|_{\mathbb{R}^{2K}},     \nonumber 
\end{equation}
bounds $E_{\text{conj}}(\lambda, P)$, is computable in finitely many operations, and 
converges to the true a-posteriori indicator as $M\to \infty$.
Note also that, in the case that $P = P^N$ is polynomial,
it is efficiently evaluated at the desired roots of
using the FFT.




\subsection{Growing the local manifolds}\label{sec:globalmani}

To compute larger portions of the stable and unstable manifolds, we employ a fundamental domain approach that leverages the periodic structure of the system. Let $\mathcal{W}^s$ and $\mathcal{W}^u$ denote the global stable and unstable manifolds, respectively.

We work within a fundamental domain 
$\mathcal{D} \subset \mathbb{R}^2$, a bounded region 
such that every point on the global manifold can be 
obtained by applying forward or backward iterates of 
the map $F$ to points in $\mathcal{D}$.

Given the local parameterizations $P^s(\theta)$ and 
$P^u(\theta)$ of the stable and unstable manifolds, 
the global manifolds are constructed by first selecting 
parameterization points $\theta_j$ such that 
$P^{s,u}(\theta_j) \in \mathcal{D}$. Then for 
each selected point $x_j = P^{s,u}(\theta_j)$, 
where $j = 1,\dots,J$, compute:
\begin{align}\label{eq:globalmanifolds}
\mathcal{W}^u_{\text{global}} &= \bigcup_{j = 1}^J 
\bigcup_{n = 0}^M F^n(x_j^u)  \\
\mathcal{W}^s_{\text{global}} &= \bigcup_{j = 1}^J 
\bigcup_{n = 0}^M F^{-n}(x_j^s) \nonumber 
\end{align}
where $M$ is the maximum number of iterates. This approach 
ensures that the computed global manifolds cover the 
relevant phase space region while avoiding redundant 
computation outside the fundamental domain.


\section{Results}\label{sec:results}

In this section, we give our results. As mentioned in the introduction, the importance of our findings are twofold: 
the novelty of the use of the method, and also the implications for billiard maps. Correspondingly, we divide this section into two parts. 

\subsection{Results for the parameterization method}

To compute the local stable and unstable manifolds, we solve the functional equations from Section~\ref{sec:parameterization}:
\[
F(P(z)) = P(\lambda z), \qquad \text{and} \qquad F(P(\mu z)) = P(z),
\]
with \( |\lambda| < 1 \) for the stable case and \( |\mu| = 1/|\lambda| > 1 \) for the unstable case. These equations are solved numerically via Newton iteration formulated in the space of spectral coefficients, as described in Sections~\ref{sec:manipulationPowerSeries} and~\ref{sec:numerics}. The two crucial sets of parameters that we vary in our numerics are the truncation order of the polynomial approximation $N$, introduced in Equation \eqref{eq:PN} and the  scaling coefficients. 
The scaling coefficients are  labeled $s$ in Lemma~\ref{def:funstable} and $u$
in Lemma~\ref{def:fununstable}, but in this section and in our Tables of numerical parameters, we refer to these parameters by $s =$ scale$_s$ and $u = $ scale$_u$. The resulting local manifolds are shown in the left panels of Figures~\ref{fig:manifolds_p10}--\ref{fig:manifolds_p2} for the period-10 orbit for Table~D, the period-3 orbit for Table~C, and the period-2 orbit for Table~B (cf. Tbl.~\ref{table:coeff}), respectively. The corresponding global manifolds
are obtained by iterating the local parameterizations under \( F \). The right panels of the same figures illustrate these global structures, together with the truncation order \( N \) of the polynomial approximation \( P^N(z) \) and the number of iterates used. 

\begin{figure}[tbph!]
    \centering
    \includegraphics[width=0.49\textwidth]{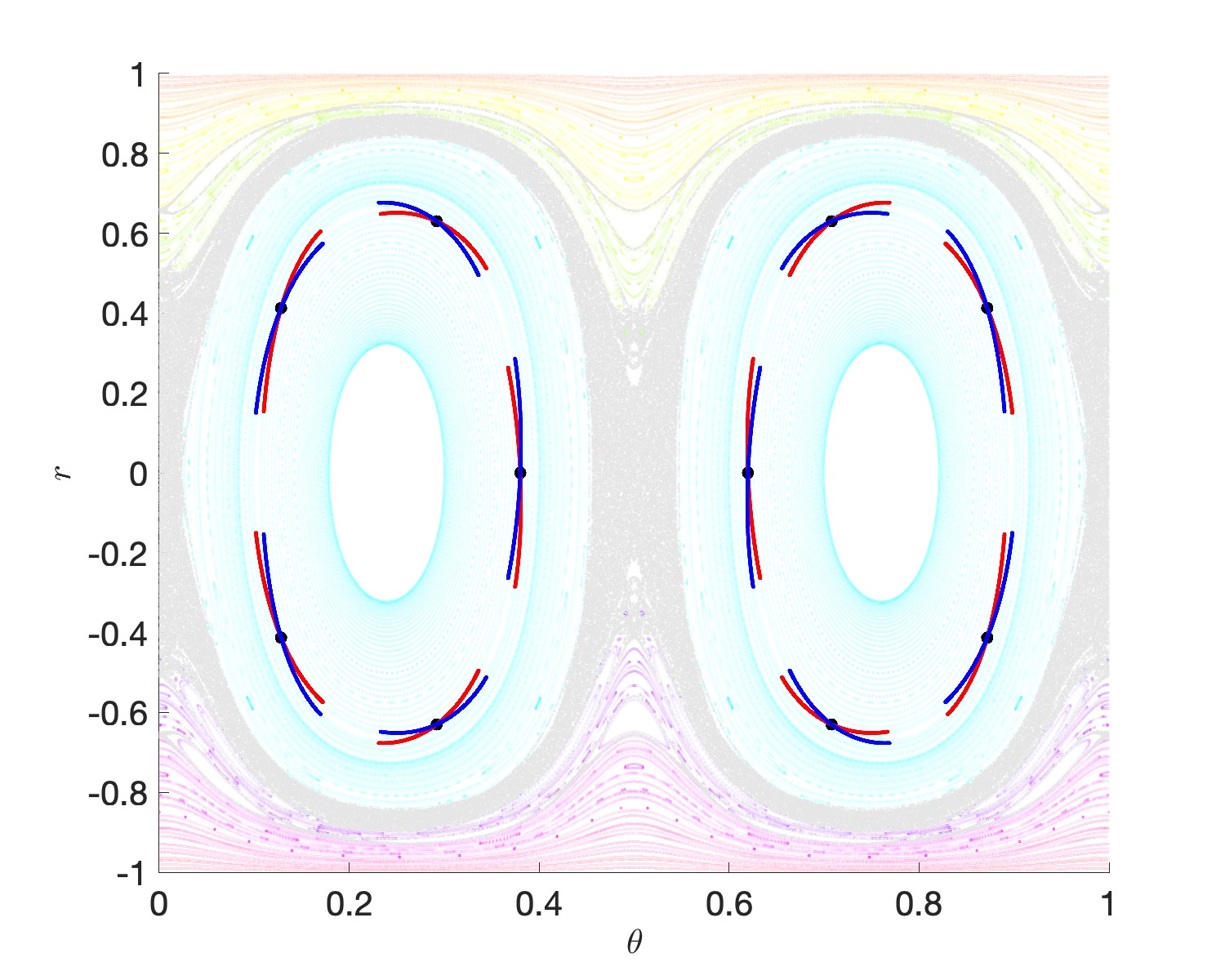}
    \hfill
    \includegraphics[width=0.49\textwidth]{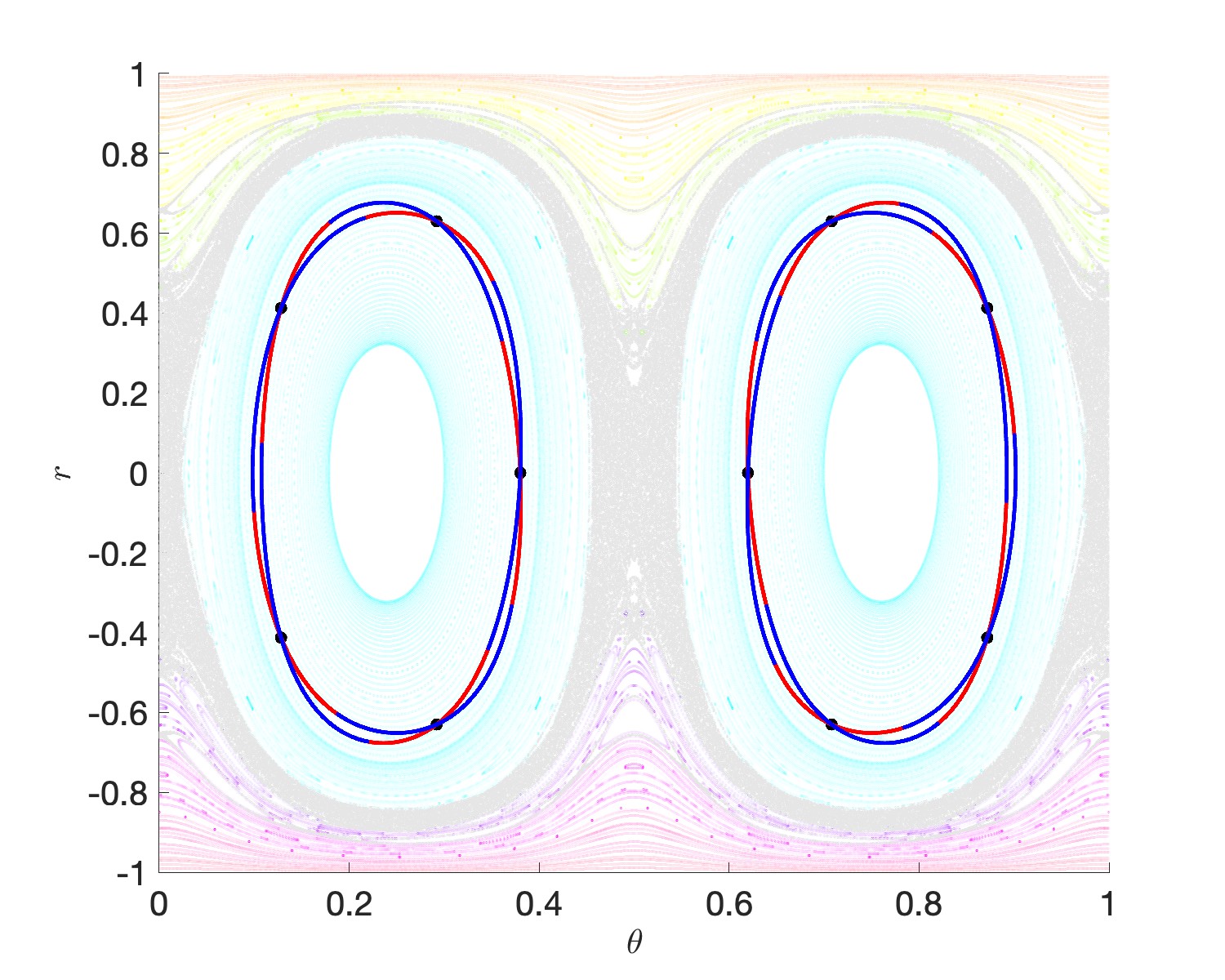}
    \caption{Local (left) and global (right) stable (blue) and unstable (red) manifolds for a period-10 orbit for Table D (cf. Tbl.~\ref{table:coeff}). The global manifolds were computed with 20 iterates of the fundamental domain and $N=60$.
    \label{fig:manifolds_p10} }
\end{figure}

\begin{figure}[tbph!]
    \centering
    \includegraphics[width=0.49\textwidth]{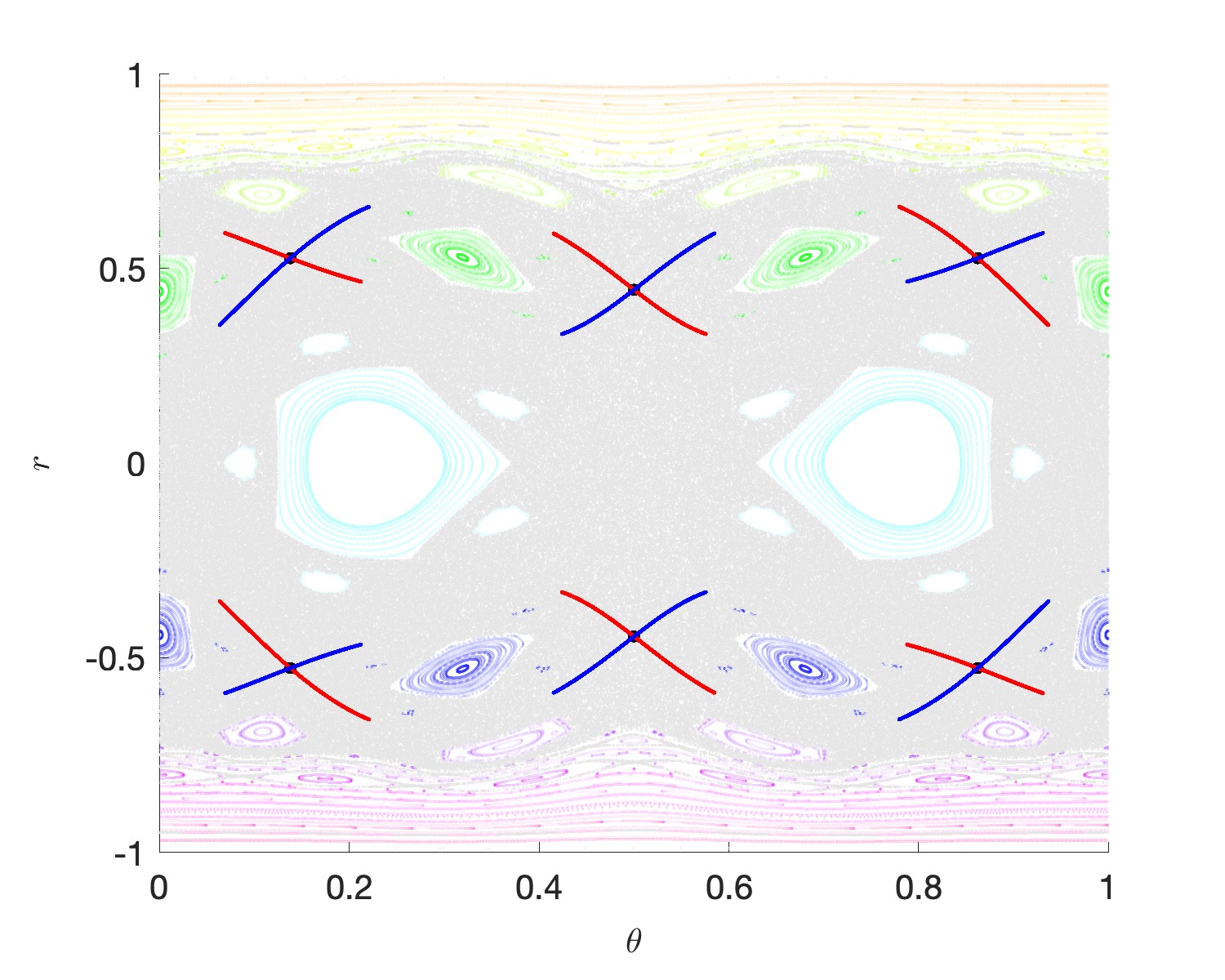}
    \hfill
    \includegraphics[width=0.49\textwidth]{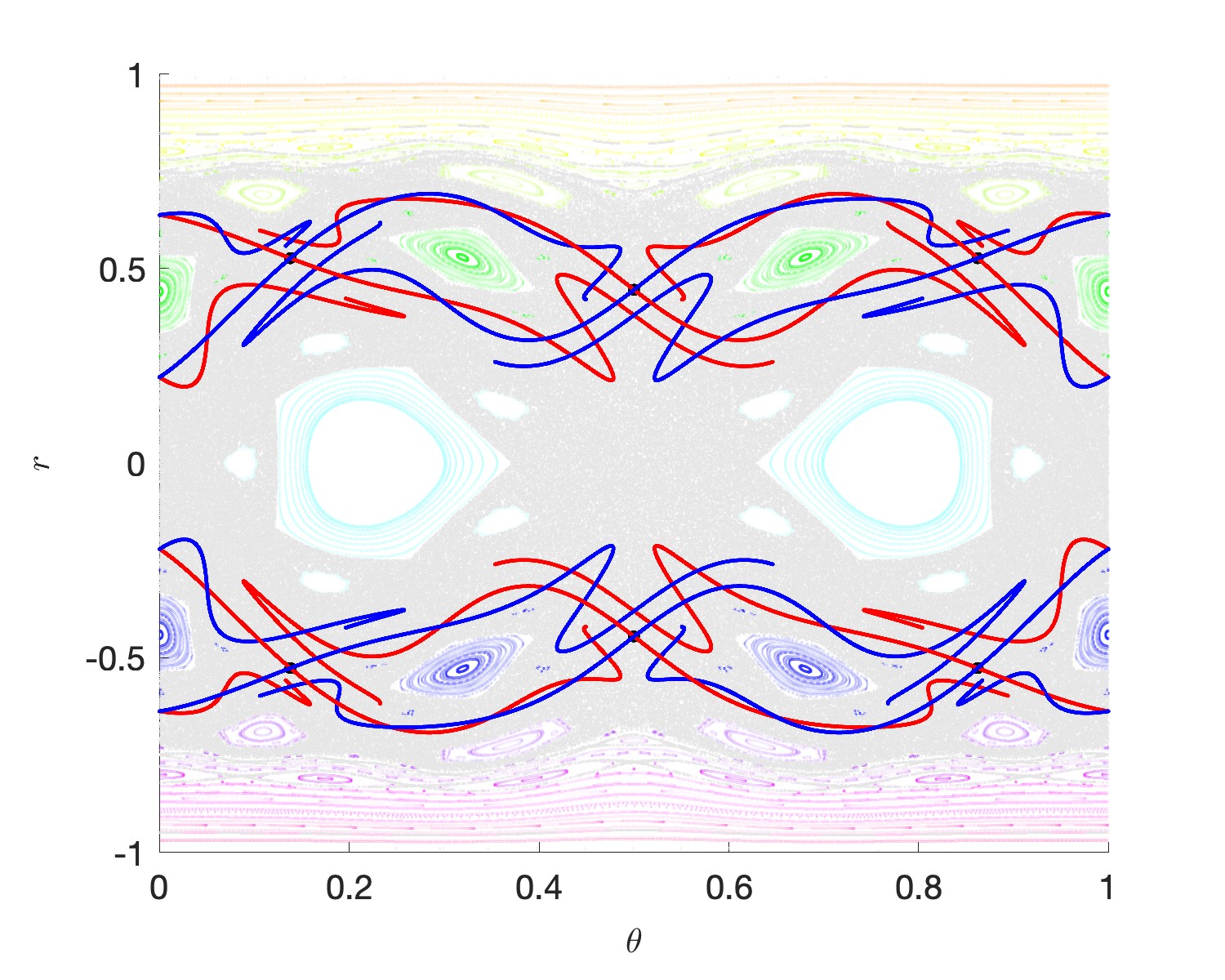}
    \caption{Local (left) and global (right) stable (blue) and unstable (red) manifolds for a period-3 orbit for Table C (cf. Tbl.~\ref{table:coeff}). The global manifolds were computed with 6 iterates of the fundamental domain and $N=60$.
    \label{fig:manifolds_p3}}
\end{figure}

\begin{figure}[tbph!]
    \centering
    \includegraphics[width=0.49\textwidth]{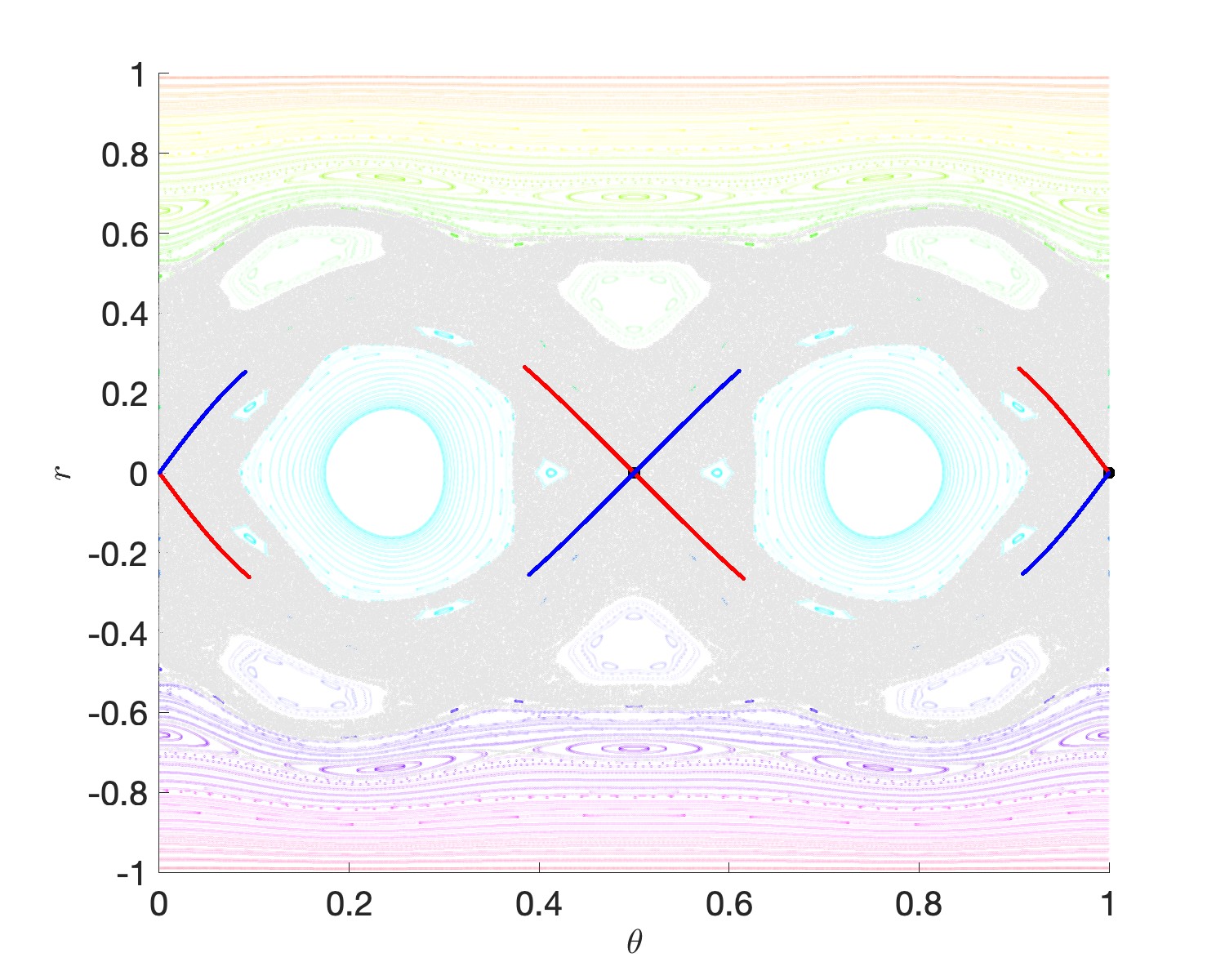}
    \hfill
    \includegraphics[width=0.49\textwidth]{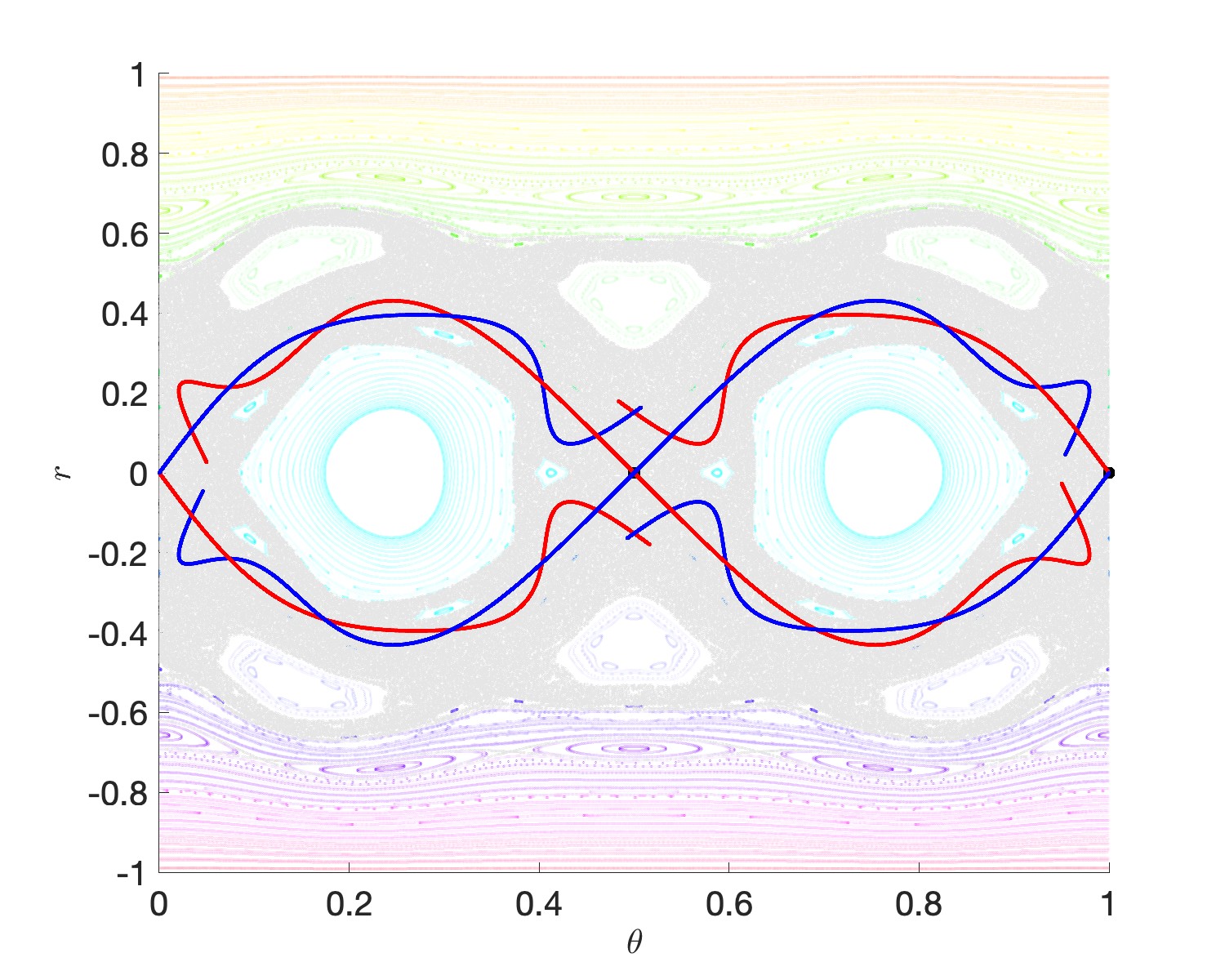}
    \caption{Local (left) and global (right) stable (blue) and unstable (red) manifolds for a period-2 orbit for Table B (cf. Tbl.~\ref{table:coeff}). The global manifolds were computed with 4 iterates of the fundamental domain and $N=60$.
    \label{fig:manifolds_p2}}
\end{figure}

The novelty of the present computation lies in performing the Newton iteration directly in the spectral domain, allowing both stable and unstable parameterizations to be obtained with high-order accuracy and without explicit recursion on the coefficients. In contrast to traditional recursive or pointwise approaches, this formulation combines spectral evaluation and FFT-based transforms to achieve rapid convergence, ensuring both robustness and precision in the manifold computation. 

To verify the spectral accuracy of the computed parameterizations, we analyze the decay of the coefficients $a_n$ of $P^N$ obtained from the Newton iteration. For both the stable and unstable manifolds, the magnitude of each coefficient is evaluated as $ \|a_n\|_{\infty} $, and its logarithmic scale $ \log_{10}(\|a_n\|_{\infty}) $ is plotted against the mode index \( n \). The resulting coefficient profiles, shown in the center and right panels of Figure~\ref{fig:manifold_coeffs} for a period-5 orbit of Table B (cf. Tbl.~\ref{table:coeff}), exhibit a rapid exponential decay of the coefficients, reaching values of order $10^{-16}$ by the truncation index \( N \). This confirms that the spectral representation \( P^N(z) = \sum_{n=0}^{N} a_n z^n \) provides a highly accurate analytic approximation of the local manifolds within the domain of convergence of the parameterization.
\begin{figure}[htbp!]
    \centering
        \includegraphics[width=0.32\textwidth]{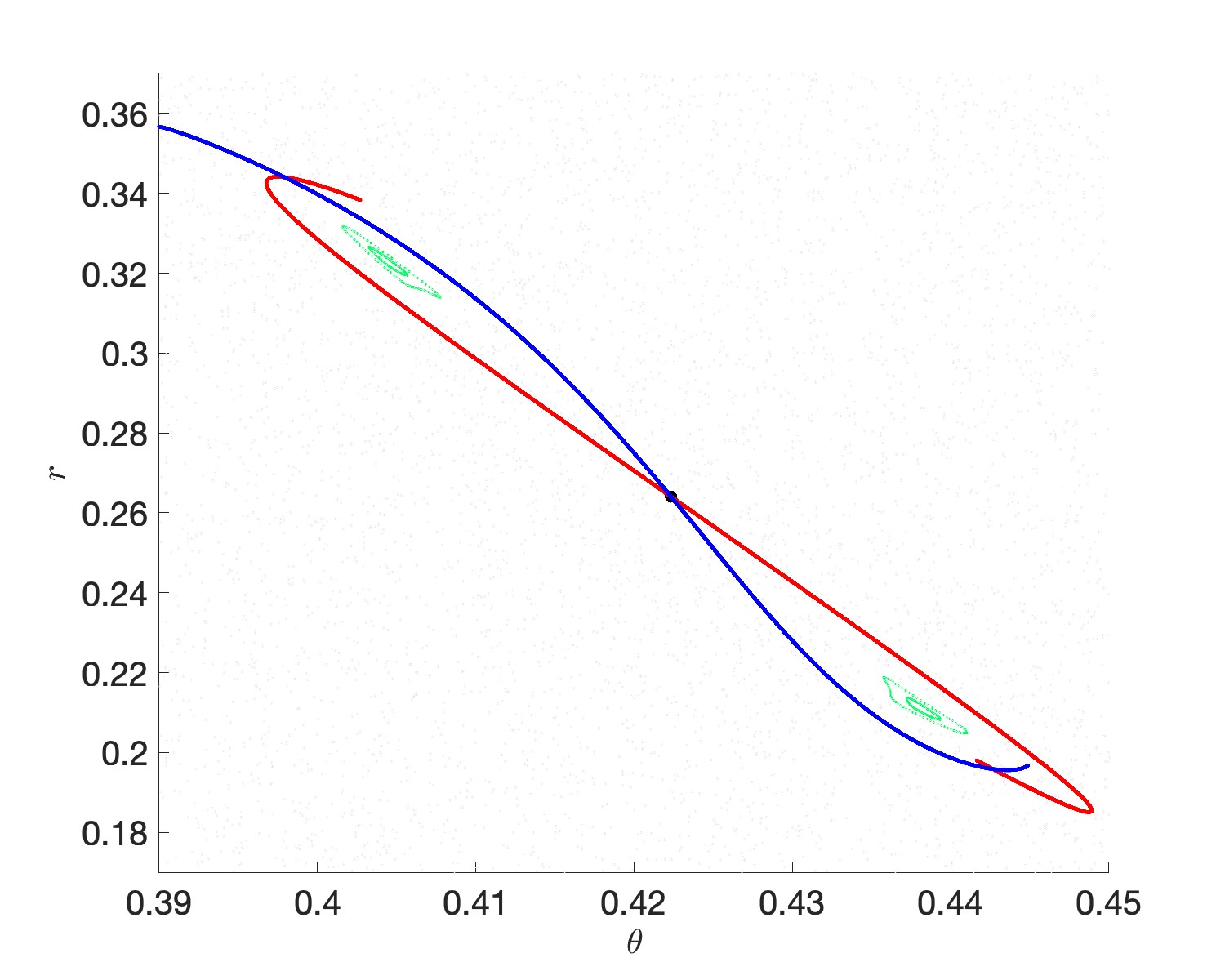}
    \hfill
        \includegraphics[width=0.32\textwidth]{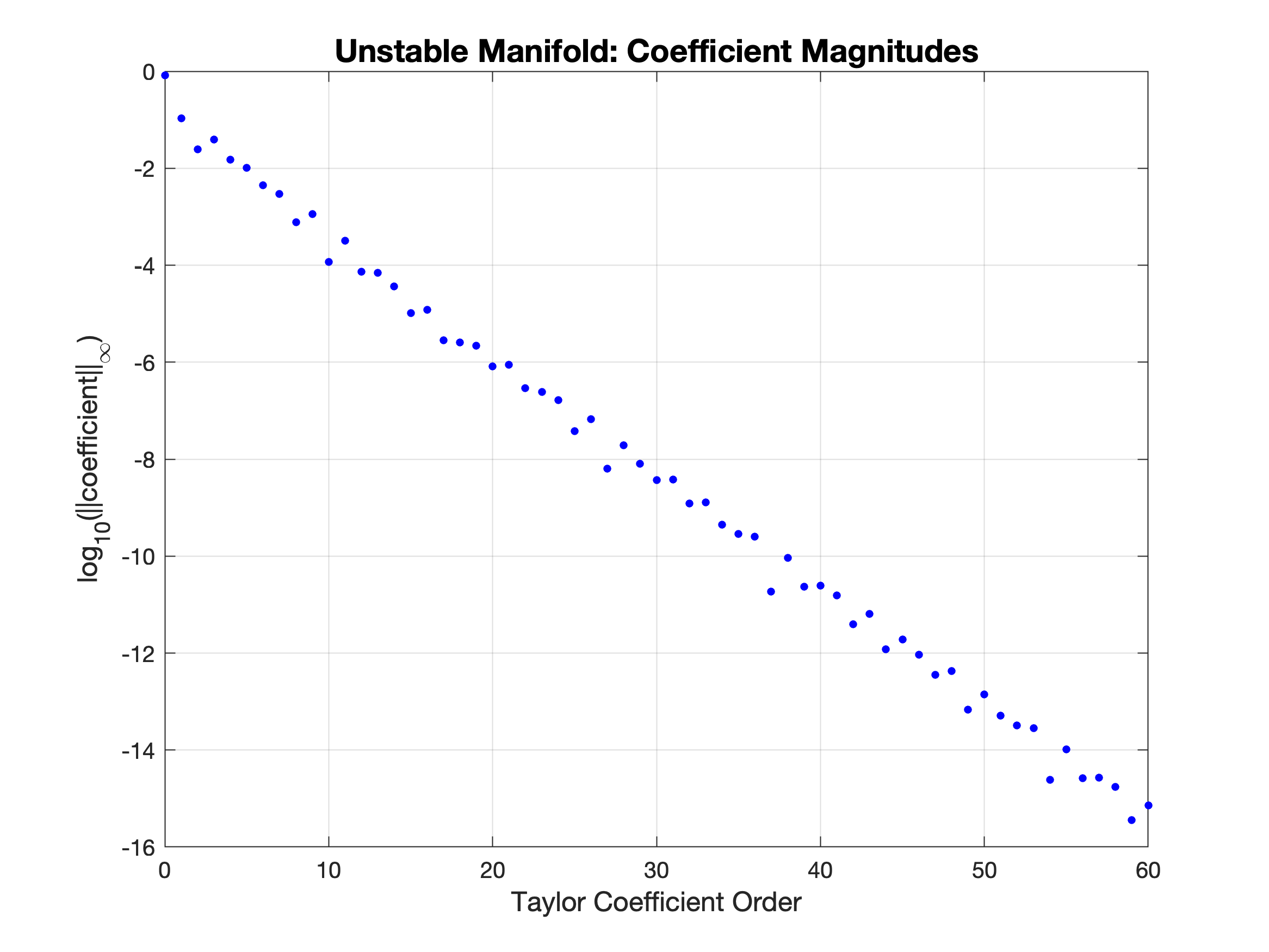}
    \hfill
        \includegraphics[width=0.32\textwidth]{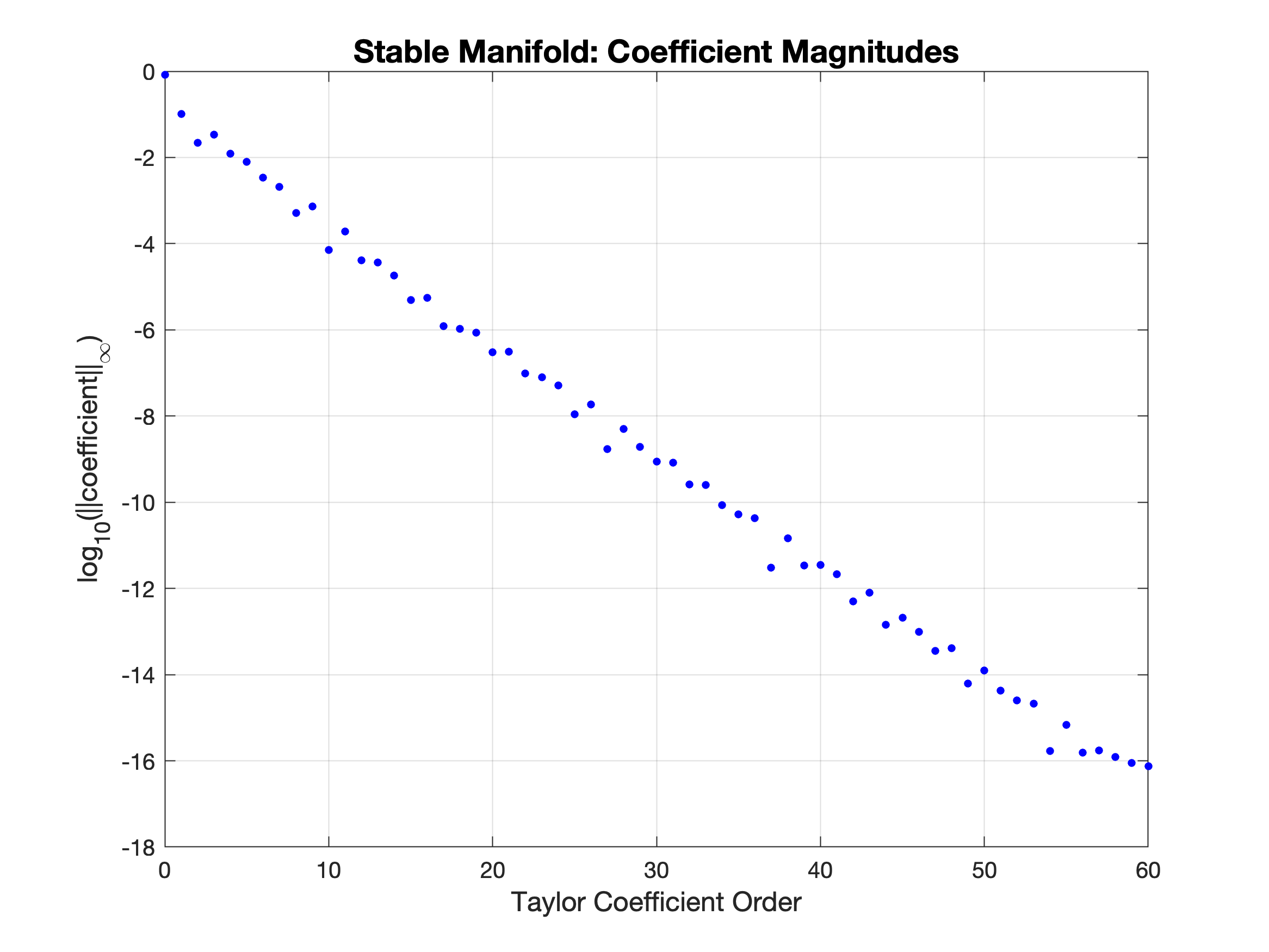}
    \caption{(Left) Computed global stable and unstable manifolds for one point of the period-5 orbit of Table B (cf. Tbl.~\ref{table:coeff}). (Center, right) Logarithmic decay of the infinity norm of Taylor coefficients $\|\mathbf{a}_n\|_\infty$ for the local unstable and stable manifolds, respectively. Computations performed using $N=60$ and 5 iterates of the fundamental domain.
    \label{fig:manifold_coeffs}}
\end{figure}

Another key advantage of our approach is that it does not stop at the linear tangent eigenvector chart, but by increasing the truncation order \(N\) of the expansion  of $P^N(z)$, one can capture the nonlinear deformation of the invariant manifold at larger amplitude. In particular, while the leading term $a_1$ corresponds to the linear eigenvector scaling, the higher coefficients $a_n$ with $n\ge2$ encode the curvature, folding and higher‐order geometry of the manifold. By systematically computing and monitoring these coefficients, our implementation allows the eigenvector to be scaled to increasingly large magnitudes and hence captures local nonlinear behavior of the manifold near the periodic orbit, see Figures~\ref{fig:manifold_coeffs_NFFT_comparison}-\ref{fig:manifold_coeffs_NFFT_comparison_Per10} for a visual illustration of this behavior. In both figures, increasing $N$ captures more nonlinear geometry of the manifolds, with scale parameters $\text{scale}_u$ and $\text{scale}_s$ (see Appendix~\ref{appen:a2} for definitions) adjusted accordingly to control the extent of the parameterization from the periodic orbit.
\begin{figure}[htbp!]
    \centering
    \includegraphics[width=0.32\textwidth]{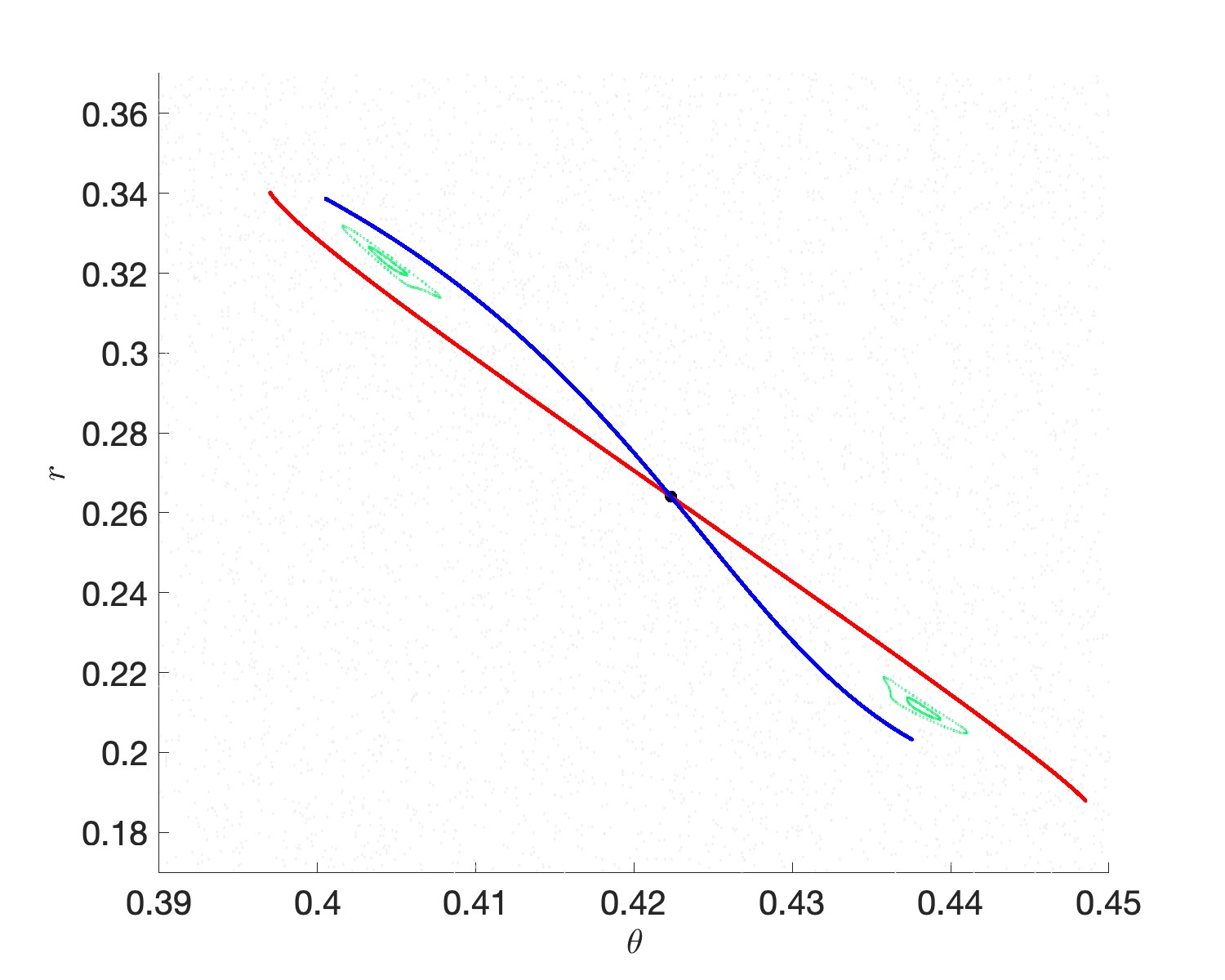}
    \hfill
    \includegraphics[width=0.32\textwidth]{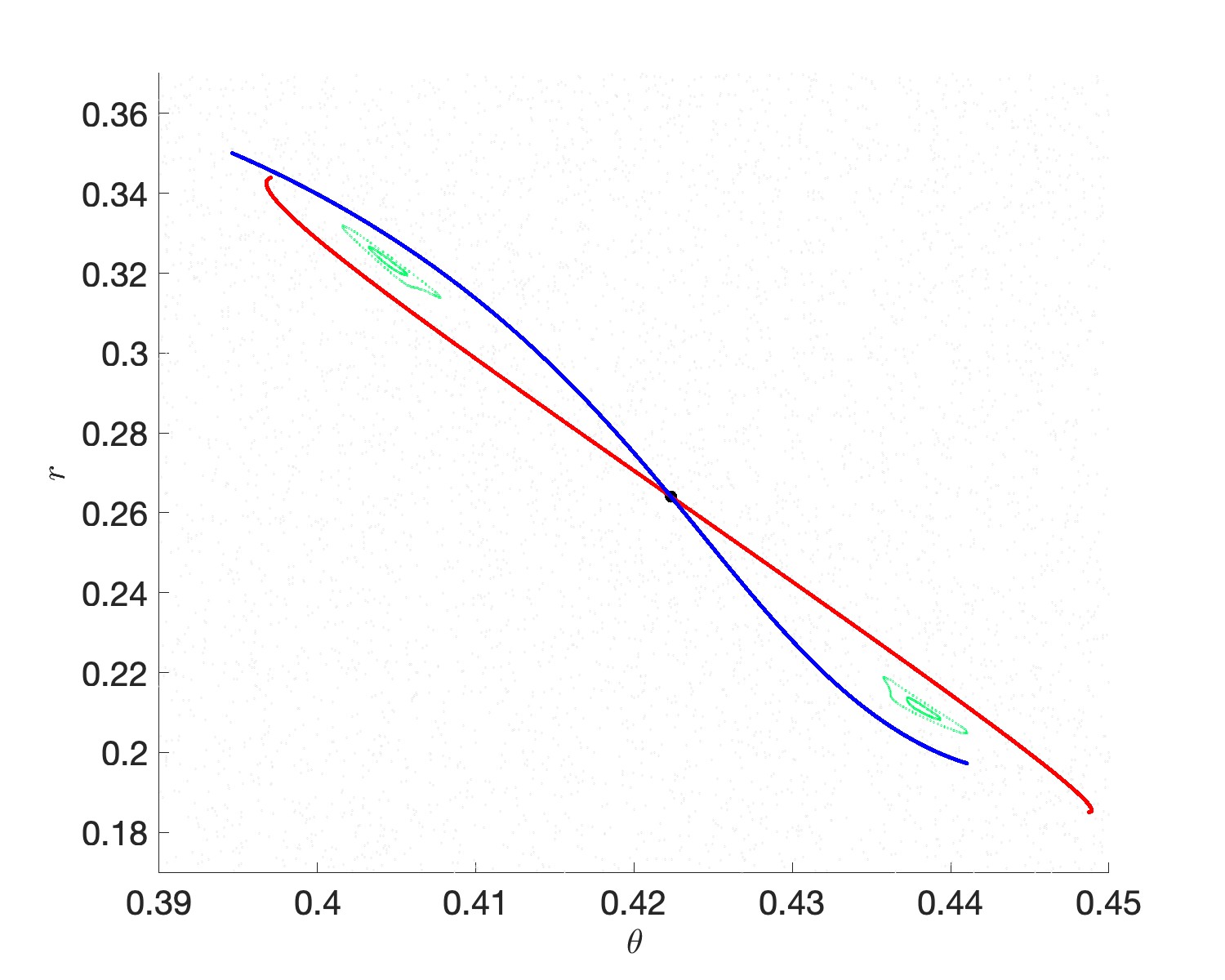}
    \hfill
    \includegraphics[width=0.32\textwidth]{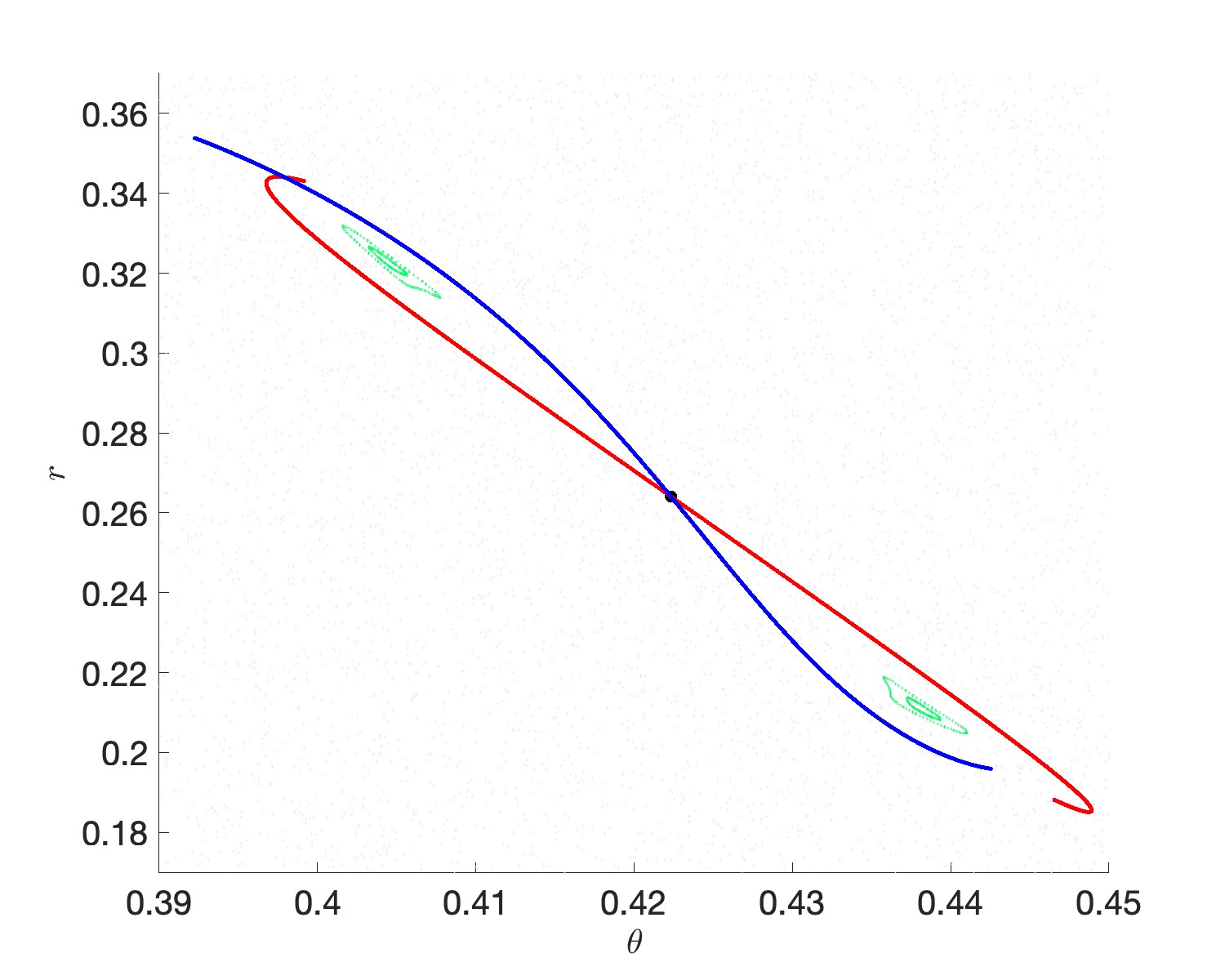}
    \caption{Computed local stable and unstable manifolds for the same period-5 orbit from Figure \ref{fig:manifold_coeffs} with varying polynomial order $N$. (Left) $N=60$, $\text{scale}_u = 0.21$, $\text{scale}_s = 0.2$. (Center) $N=100$, $\text{scale}_u = 0.26$, $\text{scale}_s = 0.26$. (Right) $N=180$, $\text{scale}_u = 0.32$, $\text{scale}_s = 0.293$; transverse intersection occur even for the local manifolds.}
    \label{fig:manifold_coeffs_NFFT_comparison}
\end{figure}

To assess the robustness of our numerical methods, we systematically increase the perturbation amplitude applied to the coefficients in Table~C (cf. Tbl.~\ref{table:coeff}). Specifically, we multiply all coefficients by a factor of 3.5, representing an extreme perturbation far beyond the nominal parameter regime. In this strongly perturbed case, the phase space reveals extensive chaotic behavior: iterates densely populate the entire accessible domain in the $(\theta, r)$ plane, forming a nearly uniform cloud of points that obscures much of the underlying structure. The chaotic sea extends throughout the region $0 \leq \theta \leq 1$ and $-0.5 \leq r \leq 0.5$, with particularly dense regions near $r \approx \pm 0.5$ where iterates exhibit wild oscillations. Despite this extreme perturbation—a 250\% increase over the nominal values—we successfully compute the stable and unstable manifolds of the hyperbolic periodic-2 orbits located near $(\theta, r) \approx (0.25, 0)$ and $(0.75, 0)$. The successful resolution of these invariant manifolds under such severe perturbation, where the system dynamics have deteriorated into an almost completely chaotic state, demonstrates the remarkable effectiveness of our computational approach even as it operates near the boundary of numerical feasibility.
\begin{figure}[htbp!]
    \centering
    \includegraphics[width=0.49\textwidth]{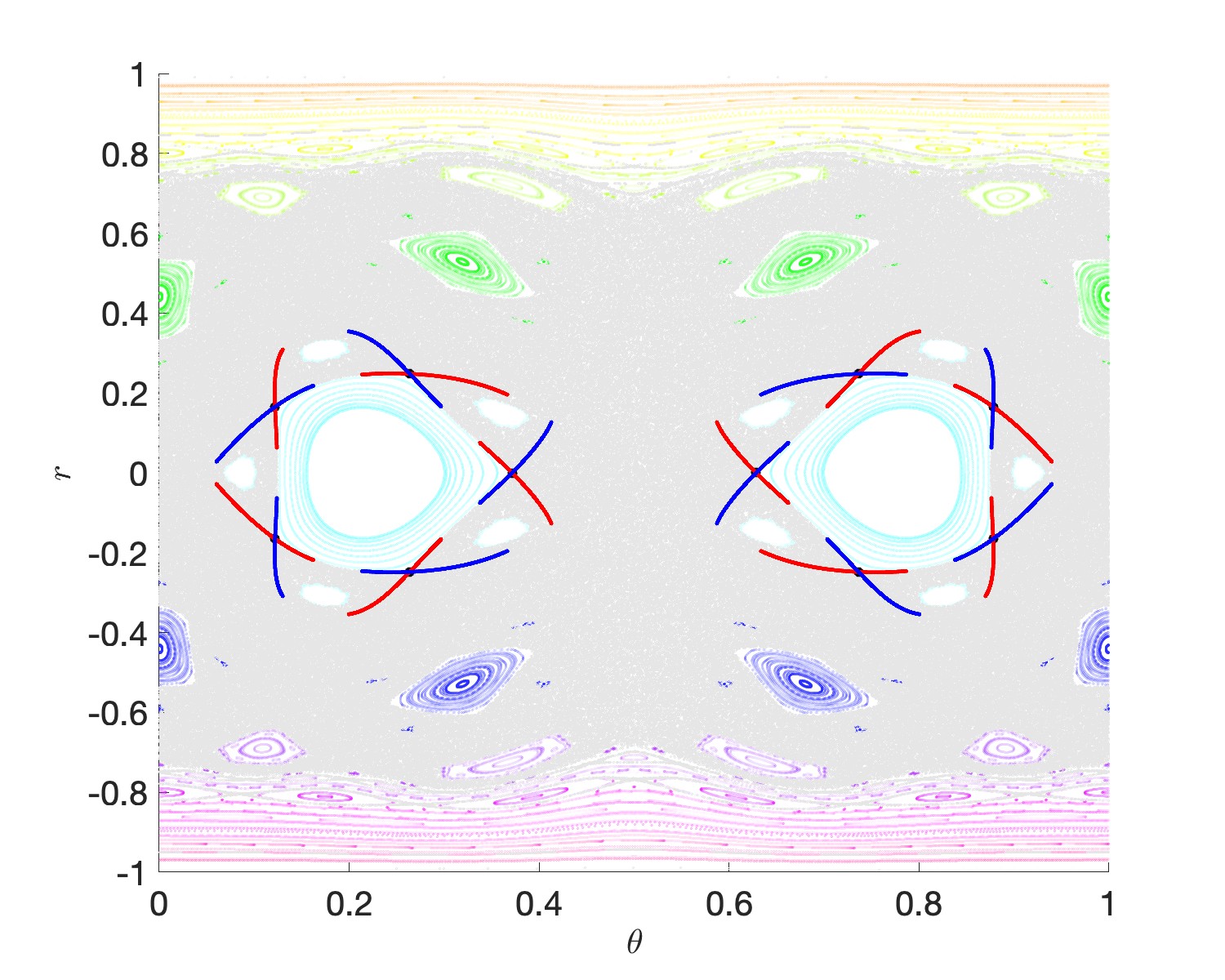}
    \hfill
    \includegraphics[width=0.49\textwidth]{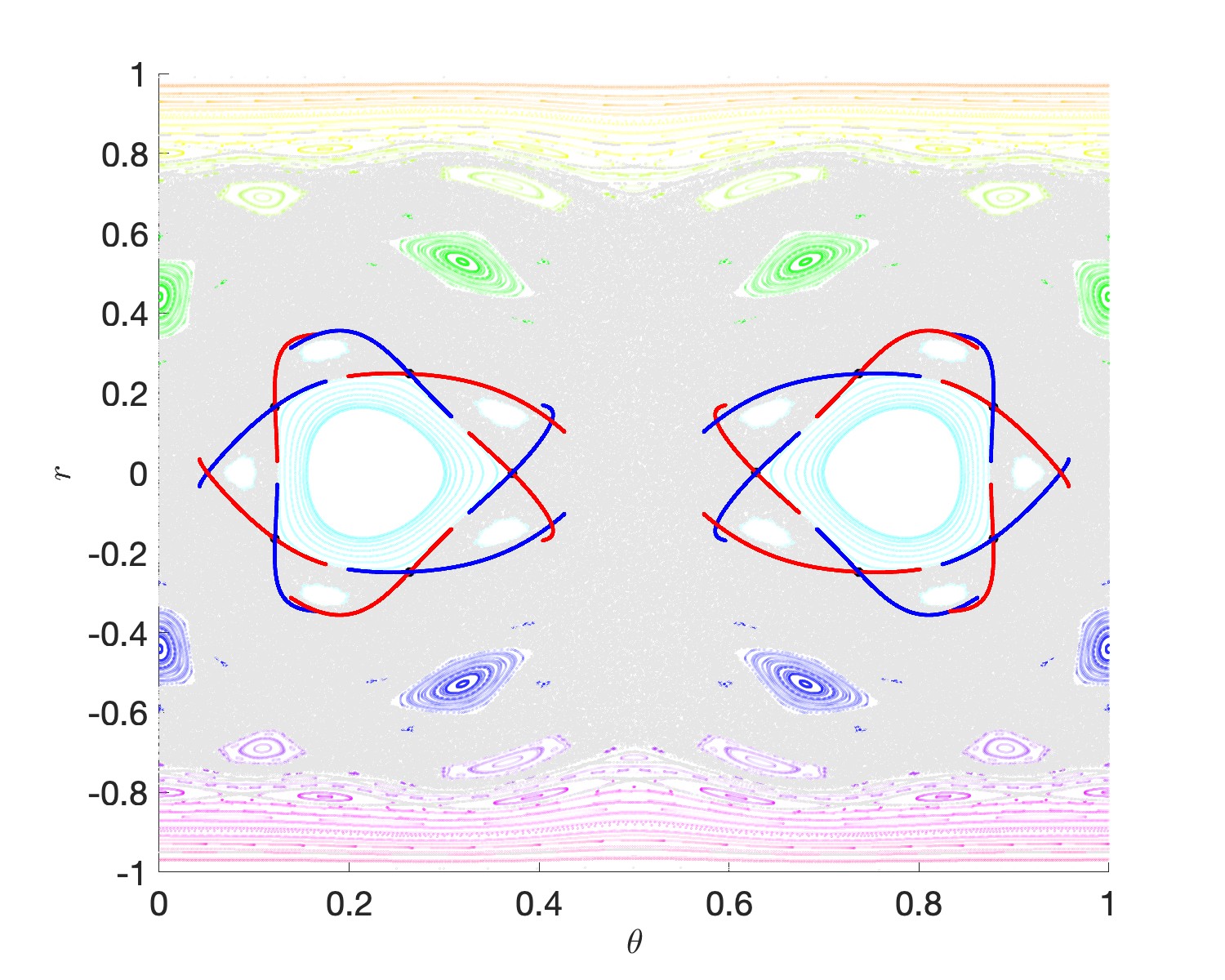}
    \caption{Computed local stable and unstable manifolds for the period-10 orbit for Table C (cf. Tbl.~\ref{table:coeff}) with varying polynomial order $N$. (Left) $N=60$, $\text{scale}_u = 0.34$, $\text{scale}_s = 0.34$. (Right) $N=200$, $\text{scale}_u = 0.51$, $\text{scale}_s = 0.51$; transverse intersections occur even for the local manifolds. 
    }
    \label{fig:manifold_coeffs_NFFT_comparison_Per10}
\end{figure}

\subsection{Results for billiards}

Here we concentrate on observations in terms of what our manifolds indicate about the billiard tables. 
In Appendix A, we have listed all of the periodic orbits found for the five tables.
For example, for Billiard Tables A and B, we found a total of seven 
periodic orbits and manifolds, with periods 2, 3, 5, 10, and 30, with a similar scope in all other cases. In Figures~\ref{fig:PS1}--\ref{fig:PS3}, we give an overview of all of our computations of invariant manifolds of Tables A--E listed in~\eqref{eq:pert_ell} and Tbl.~\ref{table:coeff}. 

In all of the manifolds we computed, either already in the local manifold, or with a small number of iterates, we see transverse intersection of the stable and unstable manifolds of the periodic orbits, implying chaos.  
The manifolds are trapped inside the connected chaotic region containing the original periodic orbit, wrapping around to avoid the islands, such as the period-2 manifolds wrapping around the two main islands in Figure~\ref{fig:manifold_coeffs} (left), the period-5 manifolds wrapping around a small pair of islands in Figure~\ref{fig:manifold_coeffs_NFFT_comparison_Per10}, and the period-10 manifolds wrapping around two star-shape pieces consisting of twelve islands.
The larger eccentricity cases have the same behavior, but all the islands are squeezed into a smaller space, meaning that it is harder to observe.  

The resulting manifolds and homoclinic tangle beautifully align with the chaotic regions computed using the weighted Birkhoff average.
Each set of manifolds and their intersections fills up the entire connected chaotic sea that contains it, giving good agreement between these two methods. This is already somewhat clear in Figures~\ref{fig:PS1}-\ref{fig:PS3}, although we had kept the number of iterations rather small to avoid overcrowding. It is much more obvious in Figure~\ref{fig:manifold_per2tab4_10it}, where we show only one set of manifolds and allow the number of iterations of the local manifolds to be ten. 

We are certainly not the first to observe that these maps bear a striking resemblance to the behavior of the Chirikov standard map. In the perturbed case (positive parameter for the Chirikov standard map), 
they both have elliptic islands, along with small chaotic regions, which are filled with homoclinic tangles of hyperbolic periodic orbits. 
For a strictly convex billiard table, if the boundary map $B$ is parameterized by arclength,  then the billiard map is symplectic~\cite{Meiss_1992}. Though we do not assume arclength parameterization, since the frequency is independent of the parameterization, we can conclude from this that  the frequency of the rotational  tori increases as $r$ increases, as we can observe from the figures of the phase space. 

We note that there are a few key differences. The Chirikov standard map has large elliptic period-one islands, whereas here the lowest period possible is two. For the Chirikov standard map, there is a famous parameter value for which all 
rotational orbits break up~\cite{sander_birkhoff_2020}. In the billiards case, no perturbation breaks up all rotational orbits. In particular, for every  table with a $C^6$ smooth strictly convex boundary, there exist ``whispering gallery" orbits~\cite{lazutkin73, douady82}.  Finally, the Chirikov standard map is entire, whereas billiard maps can only be continued to a finite thickness strip of the complex plane.  
\begin{figure}[htbp!]
    \centering
    \includegraphics[width=0.89\textwidth]{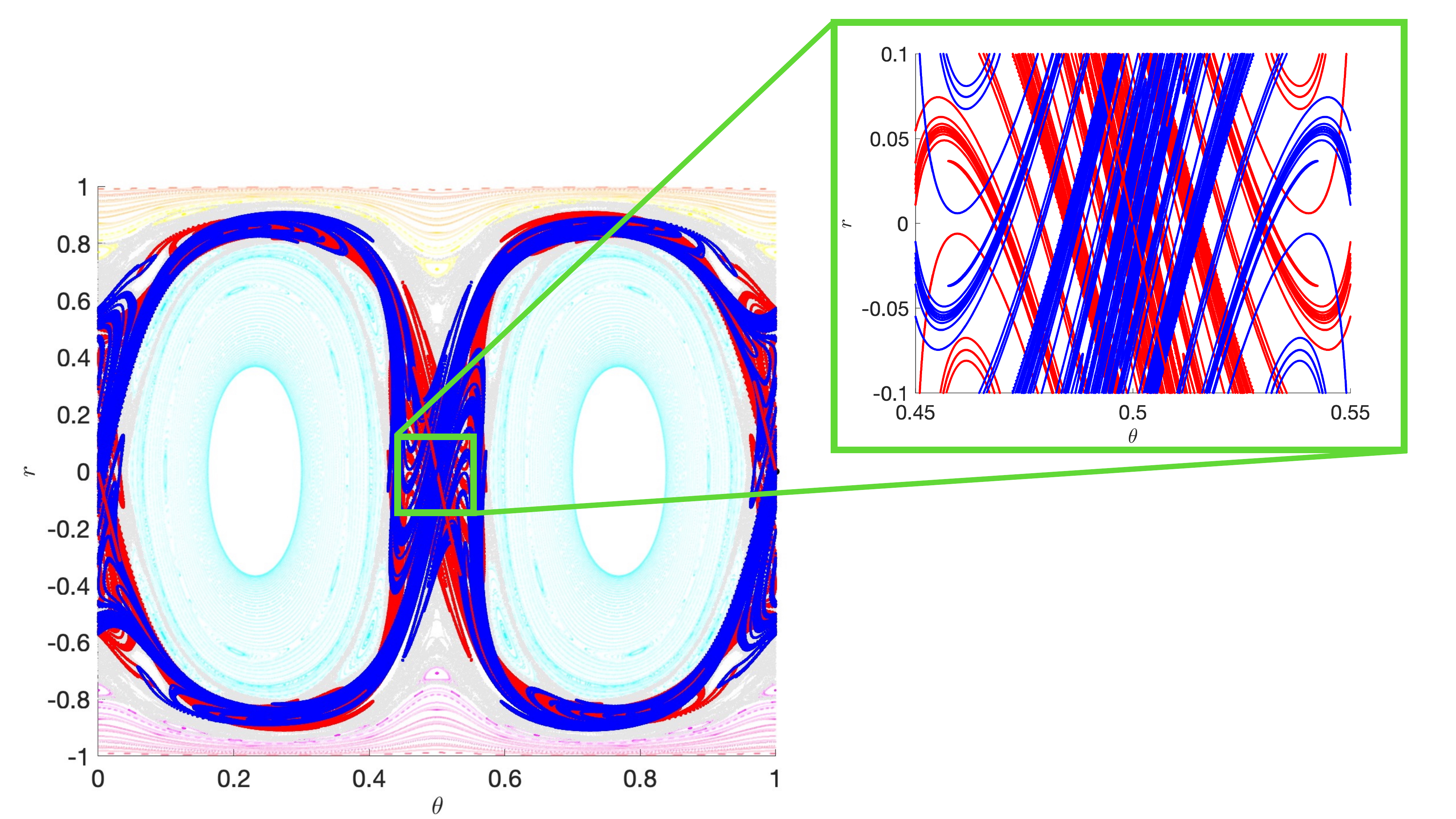}
    \caption{(Left) Computed global stable and unstable manifolds for a period-2 orbit for Table E (cf. Tbl.~\ref{table:coeff}), $N=40$, $\text{scale}_u = 0.52$, $\text{scale}_s = 0.48$ and $10$ iterations of the fundamental domain. (Right) Zoomed in plot at one of the period-2 points $(\theta, r) = (0.5,0)$ to clearly discern chaotic behavior in that particular region.}
    \label{fig:manifold_per2tab4_10it}
\end{figure}
Of particular interest is how well we can see homoclinic tangle within the chaotic region, such as the Cantor set type behavior of the manifolds, and how well it illustrates in this context the results of the $\lambda$-lemma.

The $\lambda$--lemma describes a fundamental geometric mechanism by which images of disks (or arcs) transverse to a stable (resp. unstable) manifold are stretched and asymptotically aligned with the unstable (resp. stable) manifold under forward (resp. backward) iteration of the dynamics; informally, a small transverse disk is ``carried along'' and accumulates onto the invariant manifold while its tangent directions converge to the tangent bundle of that manifold. This statement is a classical ingredient in hyperbolic dynamics and its ramifications for homoclinic and heteroclinic behavior are well documented (see, \cite{robinson1998dynamical,capinski2020computer}).

Numerically, the mechanism predicted by the $\lambda$--lemma is captured here in two complementary ways. First, the high-order parameterizations $P^N(z)$ provide accurate local charts (disks) transverse to the periodic orbit together with reliable approximations of their tangent bundles: the leading coefficient $a_1$ recovers the tangent eigenvector while the higher coefficients $a_{n\ge2}$ encode the local nonlinear deformation. Second, by iterating the computed local chart under the map \(F\) (or its inverse) and monitoring both pointwise images and the evolution of tangent information, we observe the characteristic stretching, alignment and accumulation described by the $\lambda$--lemma. Practically, the method’s spectral accuracy (exponential coefficient decay to machine precision) and the Newton method applied ensure that these iterated images remain within the radius of analyticity of the parameterization and that the observed alignment is not a numerical artifact. Thus, by combining high-order spectral charts with controlled iteration in the fundamental domain, our implementation reproduces the geometric content of the $\lambda$--lemma in a numerically verifiable way. A visualization of this phenomena can be seen in Figure~\ref{fig:manifold_per2tab4_10it} for a period-2 orbit of Table E (cf. Tbl.~\ref{table:coeff}).


\section{Conclusions and future plans}\label{sec:conclusions}

In this paper, we have developed methods for computing billiard iteration for billiard tables with smooth convex boundaries. We have then a variety of powerful dynamical tools to such maps. We used the weighted-Birkhoff average to distinguish between chaos and quasiperiodic orbits, at the same time as computing the frequency of the quasiperiodic orbits. We then used the method of multiple-shooting to find periodic orbits for the billiard map. While this method would find arbitrary periodic orbits, we selected the hyperbolic periodic orbits. We then used the parameterization method to compute the local stable and unstable manifolds for the periodic orbits. This use of the parameterization method is novel, in that the billiard map is fully implicit. Therefore, in order to find the coefficients Taylor series approximation of the manifolds, we continued our billiard map analytically into the complex plane, allowing us to compute a Fourier series with the same coefficients as the Taylor series. Finally, we are able to recover global manifolds, up to a chosen number of iterations. 

In addition to the computational novelty of this setting, the application to billiards is itself of great interest. Previous results on homoclinic tangles for billiard maps have concentrated on the singular limit as the perturbation approaches the ellipse, showing that there are exponentially small angles of intersection of the manifolds. 
In contrast, we are interested in homoclinic tangles for billiard tables which are non-limiting perturbations from the ellipse. 

One of the advantages of the parameterization method for computing manifolds is that in addition to giving high accuracy approximations, the method can be used for computer assisted proofs. Not just local manifolds, but also a finite number of iterations of these local manifolds can be rigorously validated. In future work, we plan to validate our manifolds. Since the existence of transverse homoclinic points implies chaos, such a validation will lead to a rigorous validation of chaos for perturbed elliptic billiard maps. 

The numerical algorithms described in this work have been implemented in MATLAB and are available as open-source code at \url{https://github.com/efleurantin2103/Billiards}, enabling reproduction of all computational results presented here.


\section*{Acknowledgements}
SC was partially supported by the GMU Math Industrial Immersion Program.
EF was partially supported by the National Science Foundation under Grant No. DMS-2137947. JMJ was partially supported by National Science Foundation 
grant DMS-2307987.
ES was partially supported by the Simons Foundation under Award 636383. 

\appendix

{\tiny

\section{Computational Parameters and Technical Details}

This appendix provides the parameters used to compute the global stable and unstable manifolds.  It also gives a technical modification for our numerical method for finding an initial guess for $f$.

\subsection{Modification to find our initial guess}

We now describe our second technical modification to our method for computing the intitial guess in order to perform Newton's method to find the billiard iterate $f(\theta,r)$.  To make sure that 
the initial guess is always well defined, we consider $\mathbf{v}_{-1}$, the previous direction of $\mathbf{v}$, and use the $\mathbf{v}_e$ that is the reflection of 
$\mathbf{v}_{-1}$ on the elliptical table. 

\begin{figure}[htbp!]
\begin{center}
  \includegraphics[width=0.2\textwidth]{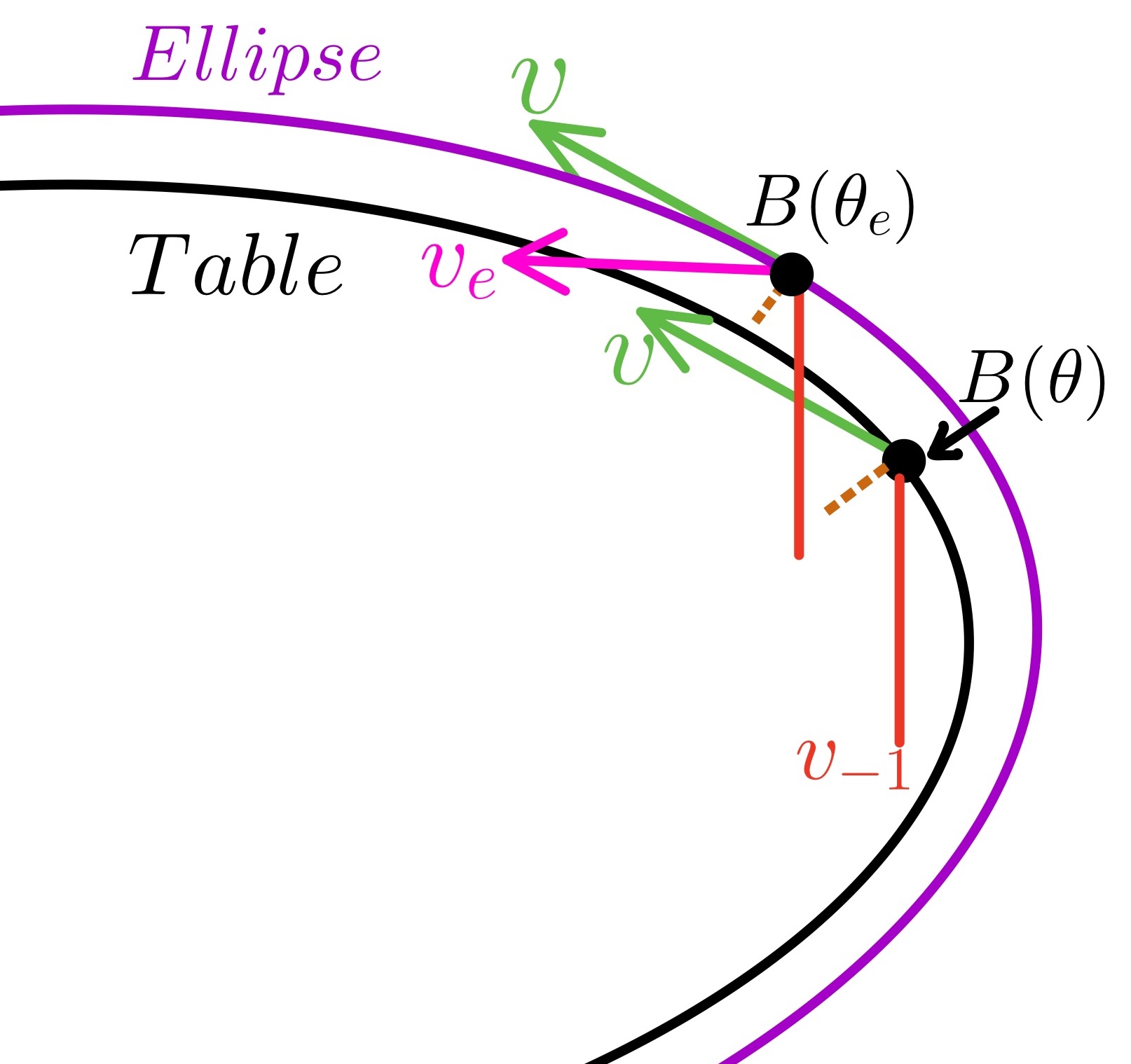}
  \caption{The reflection vector $v$ for our table might not point into the interior of the associated elliptical table. To make sure it is always well defined, we use $v_e$ in finding the guess $(\hat{\theta}_e,\hat{r}_e)$. \label{fig:modbounce}}
\end{center}
\end{figure}

\subsection{}\label{appen:a2}

 We now present the parameters and values of the periodic orbits seen in Figures~\ref{fig:PS1}--\ref{fig:PS3} in a series of data Tables, one for each period. More specifically, each data Table corresponds to orbits of a specific period and references the billiard tables in the main text where these orbits are analyzed. The parameter $r$ represents the radial coordinate, $\theta$ denotes the parameter describing the point on the boundary of the billiard table, and scale$_u$ and scale$_s$ are the scaling factors applied to the unstable and stable eigenvectors, respectively, in the parameterization method. (These were labeled $s$ and $u$ respectively in the theoretical discussion in Section~\ref{sec:parameterization}.) The variable $N$ indicates the order of the polynomial approximation, while iterations refer to the number of iterates $M$ used in the computational scheme. These parameters were carefully chosen to ensure convergence and accuracy of the computed invariant manifolds associated with each periodic orbit.


\begin{table}[htbp!]
    \centering
    \begin{tabular}{|c|c|c|c|c|c|c|}
    \hline
         \textbf{Period 2} & r  & $\theta$  & scale$_u$ & scale$_s$ & Iterations  & $N$ \\ \hline
        Table A & 0.0000 & 0.5000 & 0.48 & 0.46 & 6 & 60 \\ \hline 
        Table B & 0.0 & 0.5 & 0.45 & 0.43 & 5 & 60 \\ \hline
        Table C & 0.0 & 0.5 & 0.32 & 0.32 & 6 & 60 \\ \hline 
        Table D & 0.0 & 0.5  & 0.12 & 0.1 & 2 & 10 \\ \hline
        Table E & 0.0 & 0.5 & 0.12 & 0.12 & 2 & 10 \\ \hline
            \hline \hline
         \textbf{Period 3} & r  & $\theta$  & scale$_u$ & scale$_s$ & Iterations & $N$ \\ \hline
        Table A & 0.5333 & 0.8534 & 0.25 & 0.25 & 10 & 60\\ \hline 
        Table A & -0.5333 & 0.8534 & 0.25 & 0.25 & 10& 60\\ \hline 
        Table B & 0.5328 & 0.3348 & 0.27 & 0.27 & 7 & 60\\ \hline 
        Table B & -0.5328 & 0.3348 & 0.27 & 0.27 & 7& 60\\ \hline 
        Table C & 0.4454 & 0.5 & 0.28 & 0.28 & 6 & 60 \\ \hline 
        Table C & -0.4454 & 0.5 & 0.28 & 0.28 & 6 & 60 \\ \hline 
           \hline \hline
         \textbf{Period 4} & r  & $\theta$  & scale$_u$ & scale$_s$ & Iterations & $N$ \\ \hline
        Table C & 0.7078 & 0 & 0.2 & 0.2 & 8 & 60 \\ \hline 
        Table C & -0.7334 & 0.2152 & 0.2 & 0.2 & 8 & 60 \\ \hline 
        Table D & 0.5 & 0 & 0.114 & 0.114 & 8 & 35 \\ \hline 
        Table D & -0.8923 & 0.2216 & 0.114 & 0.114 & 8 & 35 \\ \hline 
        Table E & 0.89 & 0.22 & 0.14 & 0.115 & 3 & 25 \\ \hline 
        Table E & -0.89 & 0.22 & 0.14 & 0.115 & 3 & 25 \\ \hline 
    \end{tabular}  
    \caption{Numerical parameters for the orbits of  periods 2, 3, and 4.     The value $N$ is the order of the polynomial approximation given in \eqref{eq:polyorderN}, Iterations is the number $M$ of iterates in \eqref{eq:globalmanifolds}, and scale$_s$ and scale$_u$ are the scalings of the eigenvectors in the parameterization method defined in Definitions~\ref{def:funstable} and~\ref{def:fununstable}.}
\end{table}

\begin{table}[htbp!]
    \centering
    \begin{tabular}{|c|c|c|c|c|c|c|}
        \hline
         \textbf{Period 5} & r  & $\theta$  & scale$_u$ & scale$_s$ & Iterations & $N$ \\ \hline
        Table A & 0.2585 & 0.4308 & 0.29 & 0.29 & 11 & 60 \\ \hline 
        Table A & -0.2585 & 0.4308 & 0.29 & 0.29 & 11 & 60 \\ \hline 
        Table B & 0.2641 & 0.4223 & 0.21 & 0.2 & 10 & 60 \\ \hline 
        Table B & -0.2641 & 0.4223 & 0.21 & 0.2 & 10 & 60 \\ \hline 
        Table C & 0.7695 & 0.5 & 0.17 & 0.17 & 25 & 60 \\ \hline 
        Table C & -0.7695 & 0.5 & 0.17 & 0.17 & 25 & 60 \\ \hline 
        Table D & 0.75 & 0.44 & 0.125 & 0.123 & 10 & 30 \\ \hline 
        Table D & -0.75 & 0.44 & 0.125 & 0.123 & 10 & 30 \\ \hline 
        Table E & 0.7 & 0.46 & 0.125 & 0.123 & 7 & 25 \\ \hline 
        Table E & -0.7 & 0.46 & 0.125 & 0.123 & 7 & 25 \\ \hline 
    \hline \hline
         \textbf{Period 10} & r  & $\theta$  & scale$_u$ & scale$_s$ & Iterations & $N$ \\ \hline
        Table A & -0.3283 & 0.701 & 0.39 & 0.39 & 10 & 60\\ \hline 
        Table B & -0.1746 & 0.3767 & 0.46 & 0.48 & 4 & 60\\ \hline
        Table C & 0.1653 & 0.122 & 0.34 & 0.34 & 8 & 60\\ \hline 
        Table D & 0 & 0.3802 & 0.7 & 0.7 & 20 & 60\\ \hline
        Table E & -0.4106 & 0.1275 & 0.65 & 0.65 & 20 & 60\\ \hline \hline \hline
           \textbf{Period 30} & r  & $\theta$  & scale$_u$ & scale$_s$ & Iterations & $N$\\ \hline
        Table A & 0.1667 & 0.5103 & 0.15 & 0.15 & 20 & 60\\ \hline 
        Table A & -0.1667 & 0.4897 & 0.15 & 0.15 & 20 & 60\\ \hline 
        Table B & 0.4877 & 0.5 & 0.15 & 0.15 & 30 & 60\\ \hline 
        Table B & -0.4877 & 0.5 & 0.15 & 0.15 & 30 & 60\\ \hline       
    \end{tabular}  
    \caption{Numerical parameters for the orbits of periods 5, 10, and 30.}
    
\end{table}

\section{Numerical manipulation of power series}
\label{sec:manipulationPowerSeries}
Consider a power series 
\[
P(\sigma) = \sum_{n = 0}^\infty a_n \sigma^n,
\]
with $a_n \in \mathbb{R}$ (or $\mathbb{C}$), and let $g \colon \mathbb{C} \to \mathbb{C}$
be an analytic  function.  We seek the power series coefficients of 
\[
(g \circ P)(\sigma) = \sum_{n = 0}^\infty b_n \sigma^n.
\]
These are given explicitly by the Faa da Bruno formula (multivariate
generalization of the Leibnitz rule). This however leads to exponential
complexity, and we consider alternatives.

\subsection{Polynomial case}
A common situation is that $g$ is polynomial, in which case 
explicit recursive formulas for the coefficients
of integer powers of $P$ are worked out using the Cauchy product.
For example we have that 
\begin{equation} \label{eq:cauchyProd}
P(\sigma)^2 = \sum_{n = 0}^\infty \left(
\sum_{k = 0}^n a_{n-k} a_k
\right) \sigma^n,
\end{equation}
and that
\[
P(\sigma)^3 = \sum_{n = 0}^\infty \left(
\sum_{k = 0}^n \sum_{j = 0}^k a_{n-k} a_{k-j} a_j
\right) \sigma^n.
\]
In these cases 
\[
b_n = \sum_{k=0}^n a_{n-k}a_k, 
\quad \quad \mbox{and } \quad \quad 
b_n = \sum_{k=0}^n \sum_{j=0}^k a_{n-k} a_{k-j} a_j,
\]
respectively.
Formulas for higher order powers generalize in an 
obvious way.  Therefore Cauchy products are sufficient for working out 
the power series coefficients for $g \circ P$ in the polynomial case.

\subsection{D-finite functions: differential algebra}
If $g$ is non-polynomial, but elementary 
(sometimes referred to as D-finite), 
then the coefficients 
of $g \circ P$ are often found by combining the fact that 
the chain rule turns composition into multiplication with 
the Cauchy product.
For example, suppose $g(z) = e^z$. We write 
\[
Q(\sigma) = \sum_{n=0}^\infty b_n \sigma^n = g(P(\sigma)) =  e^{P(\sigma)},
\]
to denote the unknown power series of the composition
and have that
\[
Q'(\sigma) = \sum_{n=0}^\infty (n+1) b_{n+1} \sigma^n 
= g'(P(\sigma))P'(\sigma) = e^{P(\sigma)} P'(\sigma),
\]
which is
\[
Q'(\sigma) = Q(\sigma) P'(\sigma).
\]
Expressing this last expression in terms of power series
and calling on the Cauchy product, we have that 
\begin{align*}
 \sum_{n=0}^\infty (n+1) b_{n+1} \sigma^n &= \left(
 \sum_{n=0}^\infty b_n \sigma^n
 \right)\left(
 \sum_{n=0}^\infty (n+1) a_{n+1} \sigma^n
 \right) \\
 & = \sum_{n = 0}^\infty \left(
 (k+1)b_{n-k} a_{k+1}
 \right) \sigma^n.
\end{align*}
Matching like powers of $\sigma^n$, yields the recursive 
expression for the power series coefficients of $g\circ P$
given by 
\[
b_{n+1} = \frac{1}{n+1} \sum_{k=0}^n (k+1) b_{n-k} a_{k+1}, \quad \quad \quad n \geq 0,
\]
with 
\[
b_0 = e^{a_0}.
\]
Using this differential-algebraic framework, we find that 
the power series coefficients of $g \circ P$ are  
no more difficult to compute in this transcendental case
than they are in the quadratic case.


\subsection{Remarks on the literature}
This idea illustrated in the previous 
section is sometimes referred to as automatic differentiation 
for power series, polynomial recast, or polynomial 
embedding,
and it is sufficient for many problems with 
reasonably complicated nonlinearities.  
An important general reference is
Chapter 4.6 of the second volume of Knuth's the Art
of Computer Programming \cite{MR3077153}.
See also \cite{MR2146523} for explicit recursion relations
for many elementary functions.  
Indeed, the reference just cited 
combines automatic differentiation for power series with a
state-of-the-art parser as part of a powerful C++ software 
library called TAYLOR, and suggests that the boundary between 
simple and complex nonlinearities is fluid, when  
approached with the right tools.
We refer also to the papers 
\cite{MR4864708,MR4904407}
where more recent enhancements and applications of 
the TAYLOR package are discussed.  
Further extension of these
ideas to multivariable power series is discussed at length in 
Chapter 2.3 of \cite{HARO_2018}.

This said, there may be (depending on one's tolerance for complexity) 
problems where explicit formulas
for $g$ are either too much to hope for, or are more complicated than one
wishes consider (especially if one is unwilling to adopt
special purpose software). For example, 
Poincar{\'e} sections for ODEs and billiard maps
studied in the present work approach this boundary.  
For such maps, we have ready access only to numerical procedures that
compute the values $g$ (and its derivatives) at points.  In this case,
interpolation is a useful tool for determining
the power series coefficients of $f\circ P$, and when the functions
are analytic, the interpolation can be carried out via the discrete
Fourier transform (DFT).  An excellent reference for this material
is the book of \cite{MR1322049}.

\subsection{Composition of power series via the DFT} \label{sec:DFT}
Suppose that $P \colon \mathbb{R} \to \mathbb{R}^d$,
and $g \colon \mathbb{R}^d \to \mathbb{R}$ are real analytic, 
$g(P(\sigma)) = Q(\sigma)$, whose power series we  
approximate with the polynomial interpolant
\[
Q^N(\sigma) = \sum_{n = 0}^N \bar b_n \sigma^n.
\]
The coefficients $\bar b_n \approx b_n$, $0 \leq n \leq N$ 
of the interpolant are determined by the values of $Q$
at any $(N+1)$ distinct points, and we choose values
equidistributed on the unit circle in $\mathbb{C}$ with the 
first equal to one.

More precisely, define the $N+1$ grid points
\begin{equation}
\sigma_k = e^{2\pi i k/(N+1)}, \quad 0 \leq k \leq N. \nonumber 
\end{equation}
and evaluate the composition on this mesh to obtain
\begin{align}
Q^N(\sigma_k) &= \sum_{n=0}^N \bar b_n \sigma_k^n \approx g(P(\sigma_k)) \nonumber \\
\end{align}
for $0 \leq k \leq N$.
Letting $\omega = e^{\frac{2 \pi i}{N+1}}$, the $b_n$ are approximated 
by $\bar b_n$ solving the linear system 
\begin{equation}
\begin{pmatrix}
1 & 1 & 1 & \cdots & 1 \\
1 & \omega_1 & \omega_1^2 & \cdots & \omega_1^N \\
\vdots & \vdots & \vdots & \ddots & \vdots \\
1 & \omega_N & \omega_N^2 & \cdots & \omega_N^N
\end{pmatrix}
\begin{pmatrix}
\bar b_0 \\
\bar b_1 \\
\bar b_2 \\
\vdots \\
\bar b_N
\end{pmatrix}
= \begin{pmatrix}
g(P(\sigma_0)) \\
g(P(\sigma_1)) \\
g(P(\sigma_2)) \\
\vdots \\
g(P(\sigma_N))
\end{pmatrix} \nonumber 
\end{equation}

\begin{remark}[Analytic continuation] \label{rem:mainRem}
{\em
In this the step, we see that computing 
$f(P(\sigma_k))$ requires evaluating the real analytic 
function $P$ on complex inputs.  This assumes that we are
able to analytically continue $P$ to the unit disk
in $\mathbb{C}$.  Moreover, since the analytic 
continuation of $P$ is complex valued, we have to 
evaluate the real analytic 
function $g$ for complex values as well.
Again, this may require analytic continuation of $g$.}
\end{remark}

Let 
\begin{equation}
V = \begin{pmatrix}
1 & 1 & 1 & \cdots & 1 \\
1 & \omega_1 & \omega_1^2 & \cdots & \omega_1^N \\
\vdots & \vdots & \vdots & \ddots & \vdots \\
1 & \omega_N & \omega_N^2 & \cdots & \omega_N^N
\end{pmatrix}\nonumber,
\end{equation}
Observe that $V$
is invertible with 
\begin{equation}
V^{-1} = \frac{1}{N+1}\begin{pmatrix}
1 & 1 & 1 & \cdots & 1 \\
1 & \omega_1^{-1} & \omega_1^{-2} & \cdots & \omega_1^{-N} \\
\vdots & \vdots & \vdots & \ddots & \vdots \\
1 & \omega_N^{-1} & \omega_N^{-2} & \cdots & \omega_N^{-N}
\end{pmatrix}.\nonumber
\end{equation}
These are the well known DFT matrices. Since we 
have an explicit inverse, 
solving the linear system determining the 
coefficients of the composition is reduced to
matrix-vector multiplication.
Moreover, when $N$ is large, this matrix-vector multiplication
is accelerated using the Fast Fourier Transform (FFT).

When  $P$ itself is a polynomial (as will be the case in our
Newton procedure) we write  
\[
P(\sigma) = \sum_{n = 0}^N a_n \sigma^n, 
\]
and have that evaluation of $P$ at the interpolation points
$\sigma_k$ is expressed as 
\[
P(\sigma_k) = \sum_{n = 0}^N a_n \sigma_k^n. 
\]
That is
\begin{equation}
\begin{pmatrix}
1 & 1 & 1 & \cdots & 1 \\
1 & \omega_1 & \omega_1^2 & \cdots & \omega_1^N \\
\vdots & \vdots & \vdots & \ddots & \vdots \\
1 & \omega_N & \omega_N^2 & \cdots & \omega_N^N
\end{pmatrix}
\begin{pmatrix}
a_0 \\
a_1 \\\
a_2 \\
\vdots \\
a_N
\end{pmatrix}
= \begin{pmatrix}
(P(q_0)) \\
(P(q_1)) \\
(P(q_2)) \\
\vdots \\
(P(q_N))
\end{pmatrix} \nonumber 
\end{equation}

Let $p_n = P(q_k)$ for $0 \leq k \leq N$ and write 
\[
\mathbf{p} = \left(
\begin{array}{c}
p_1 \\
\vdots \\
p_N
\end{array}
\right), \quad 
\mathbf{a} = \left(
\begin{array}{c}
a_1 \\
\vdots \\
a_N
\end{array}
\right) \quad \mbox{and} \quad
\mathbf{b} = \left(
\begin{array}{c}
\bar b_1 \\
\vdots \\
\bar b_N
\end{array}
\right).
\]

Given the coefficients $\mathbf{a}$ of a polynomial $P$, 
the procedure for approximating the series coefficients of 
the composition is as follows:
\begin{itemize}
\item Compute $\mathbf{p}$ via 
\[
V \mathbf{a} = \mathbf{p}.
\]
This step is actually known as the DFT.
\item Compute the numbers $f_k = f(p_k)$, $0 \leq k \leq N$.  
Let
\[
\mathbf{f} = \left(
\begin{array}{c}
\bar f_1 \\
\vdots \\
\bar f_N
\end{array}
\right).
\]
\item Compute 
\[
\mathbf{b} = V^{-1} \mathbf{f}.
\]
This step is actually known as the inverse discrete Fourier
transform (IDFT).
\end{itemize}
The components of $\mathbf{b}$ are the approximate Taylor 
coefficients of the composition $f \circ P$.

\subsection{Multiplication matrix for power series}\label{sec:multMat}
In the context of a Newton procedure, 
we often have to solve equations like
\begin{equation} \label{eq:basicLinearEq}
Q(\sigma) P(\sigma) = g(\sigma), 
\end{equation}
where $Q$ and $g$ are given truncated power series: that is, 
polynomials.  (The equation is an oversimplification,
but it will make the necessary point).
While this equation can be solved
by recursion, it is also amiable to numerical linear algebra.

To see this, let 
\[
\mathbf{a} = \left(
\begin{array}{c}
a_0 \\
\vdots \\
a_N
\end{array}
\right), \quad \quad \mbox{and} \quad \quad 
\mathbf{g} = \left(
\begin{array}{c}
g_0 \\
\vdots \\
g_N
\end{array}
\right),
\]
denote the vectors of known polynomial coefficients, 
and 
\[
\mathbf{b} = \left(
\begin{array}{c}
b_0 \\
\vdots \\
b_N
\end{array}
\right),
\]
denote the unknown coefficients of $P$ (the quotient of 
$g$ and $Q$).

Since this equation is linear in $P$, there is a matrix 
representing the action of $Q$ on $P$.  Applying the formula
of Equation \eqref{eq:cauchyProd} to the standard basis vectors
leads a representation of the action of the Cauchy product 
on the coefficient vector for $P$ given by  
\[
Q \cdot P = 
\left(
\begin{array}{ccccc}
a_0 & 0 & 0 & \ldots & 0 \\
a_1 & a_0 & 0 & \ldots & 0 \\
a_2 & a_1 & a_0 & \ldots & 0 \\
\vdots & \vdots & \vdots & \ddots & \vdots \\
a_N & a_{N-1} & a_{N-2} & \vdots & a_0 
\end{array}
\right)
\left(
\begin{array}{c}
b_0  \\
b_1  \\
b_2 \\
\vdots  \\
b_N  
\end{array}
\right)
\]
Let 
\begin{equation}\label{eq:convMat}
\mbox{CauchyMat}(\mathbf{a}) = 
\left(
\begin{array}{ccccc}
a_0 & 0 & 0 & \ldots & 0 \\
a_1 & a_0 & 0 & \ldots & 0 \\
a_2 & a_1 & a_0 & \ldots & 0 \\
\vdots & \vdots & \vdots & \ddots & \vdots \\
a_N & a_{N-1} & a_{N-2} & \vdots & a_0 
\end{array}
\right).
\end{equation}
We can solve for the unknown coefficients $\mathbf{b}$ of $P(\sigma)$
in Equation \eqref{eq:basicLinearEq} by solving the 
(lower triangular) linear system
\[
\mbox{CauchyMat}(\mathbf{a}) \mathbf{b} = \mathbf{g}.
\]
}

\bibliographystyle{alpha}
\bibliography{billiards}

\end{document}